\newtheorem{theorem}{Theorem} [section]
\newtheorem{prop}[theorem]{Proposition}
\newtheorem{lemma}[theorem]{Lemma}
\newtheorem{cor}[theorem]{Corollary}
\theoremstyle{definition}
\theoremstyle{remark}
\newtheorem{remark}[theorem]{Remark}
\numberwithin{equation}{section}
\numberwithin{figure}{section}
\newcommand\bP{{\mathbb P}}
\newcommand\C{{\mathbb C}}
\newcommand\N{{\mathbb N}}
\renewcommand\P{{\mathbb P}}
\newcommand\Q{{\mathbb Q}}
\newcommand\lra{\longrightarrow}
\renewcommand\phi{\varphi}
\renewcommand\O{\mathcal{O}}
\newcommand\Bif{\mathrm{Bif}}
\newcommand{\crit}{\mathrm{crit}}
\newcommand{\Lbar}{\overline{L}}
\newcommand\Gal{\operatorname{Gal}}
\newcommand\codim {\operatorname{codim}}
\newcommand\M {\mathrm{MP}}
\newcommand\Kbar {\overline{K}}
\newcommand\Qbar {\overline{\Q}}
\newcommand\ord {\operatorname{ord}}
\newcommand\hhat {\hat{h}}
\begin{document}

\title{The Dynamical Andr\'e-Oort Conjecture for cubic polynomials}

\author{Dragos Ghioca}
\address{
Dragos Ghioca\\
Department of Mathematics\\
University of British Columbia\\
Vancouver, BC V6T 1Z2\\
Canada
}
\email{dghioca@math.ubc.ca}

\author{Hexi Ye}
\address{
Hexi Ye\\
Department of Mathematics\\
University of British Columbia\\
Vancouver, BC V6T 1Z2\\
Canada}
\email{yehexi@math.ubc.ca}

\subjclass[2010]{Primary 37F50; Secondary 37F05}
\keywords{Dynamical Andr\'e-Oort Conjecture, unlikely intersections in dynamics}

\thanks{Our research was partially supported by an NSERC grant.}

%\date{\today}

\begin{abstract}
In the moduli space $\M_d$ of degree $d$ polynomials,  special subvarieties are those cut out by critical orbit relations, and then special points are the post-critically finite polynomials. It was conjectured that in $\M_d$, subvarieties containing a  Zariski-dense set of special points are exactly these special subvarieties. In this article, we prove the first non-trivial case for this conjecture: the case $d=3$. 
\end{abstract}

\maketitle

\section{Introduction}

Our main result is the proof of the first important case of the Dynamical Andr\'e-Oort Conjecture (posed by Baker and DeMarco \cite{Matt-Laura-2}); see Theorem \ref{precise version} for a more precise version, which also includes a Bogomolov-type statement for our result.
\begin{theorem}\label{DAO cubic} 
In the moduli space of cubic polynomials, the irreducible subvarieties containing a Zariski-dense set of post-critically finite points are exactly those cut out by critical orbit relations. 
\end{theorem}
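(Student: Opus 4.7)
The claim is trivial when $\dim V = 0$ and when $V = \M_3$, so the plan reduces to the case of an irreducible curve $V \subset \M_3$ carrying infinitely many PCF points. I would fix a normal form for cubic polynomials with marked critical points $c_1,c_2$, everything defined over a number field $K$, and for each $i \in \{1,2\}$ consider the parametric canonical height $\hhat_i(t)$ of the critical point $c_i$ as $t$ varies over $V$; decomposed as a sum of local Green functions $G_{i,v}$ over places $v$ of $\Kbar$, a parameter $t \in V(\Qbar)$ is PCF precisely when $\hhat_1(t)=\hhat_2(t)=0$. By the Call--Silverman theory, each $\hhat_i$ is the height of a semipositive adelic metrized line bundle $\Lbar_i$ on the smooth projective completion of $V$.

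Zariski density of PCF points on $V$ produces an infinite sequence of parameters that are simultaneously small with respect to both $\Lbar_1$ and $\Lbar_2$. Arithmetic equidistribution (Yuan for the archimedean part, Chambert-Loir for the non-archimedean part) then forces the curvature measures attached to $\Lbar_1$ and $\Lbar_2$ to coincide on the Berkovich analytification of $V$ at every place of $K$. Identifying these curvature measures with the bifurcation measures of the critical points along $V$, one obtains the key local rigidity statement: at every place $v$, the two bifurcation measures $\mu_{V,1,v}$ and $\mu_{V,2,v}$ agree. The remaining task is therefore the algebraic-rigidity step: show that this coincidence forces $V$ to be cut out by an honest critical orbit relation, i.e.\ that, identically on $V$, either $f^m(c_1)=f^n(c_1)$ for some $m\neq n$, or $f^m(c_2)=f^n(c_2)$ for some $m\neq n$, or $f^m(c_1)=f^n(c_2)$ for some $m,n\geq 0$.

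This last step is the main obstacle, and it is where the degree-$3$ hypothesis plays a decisive role. From the archimedean coincidence $\mu_{V,1,\infty}=\mu_{V,2,\infty}$, combined with the theory of holomorphic motions and the description of bifurcation currents in the work of DeMarco and Dujardin--Favre, I would first extract a strong analytic link between the activity loci of $c_1$ and $c_2$ on $V^{\mathrm{an}}_\C$: on a set of positive bifurcation measure, suitable iterates of the two critical points would have to move compatibly under a holomorphic motion. I would then promote this analytic link to an algebraic identity using the non-archimedean coincidences at primes of bad reduction, where Berkovich-space arguments in the spirit of Baker--Rumely and Favre--Rivera-Letelier let one rule out accidental equality of Berkovich bifurcation measures.

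Finally, the explicit two-parameter normal form for cubic polynomials — together with a finite case analysis of how two polynomial functions $t \mapsto f_t^m(c_1(t))$ and $t \mapsto f_t^n(c_2(t))$ can share dynamics along a curve — should force the identity so obtained to be one of the three critical orbit relations above. The hardest part of this program, and where I expect the bulk of the technical work to lie, is precisely this final classification: understanding, for the specific moduli space $\M_3$, why the coincidence of bifurcation measures cannot occur along any non-special curve, a rigidity statement that has no clean analogue in higher-degree moduli spaces and that exploits the fact that a cubic polynomial has exactly two critical points.
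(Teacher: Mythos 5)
Your opening framework---canonical heights of the two marked critical points, the adelic metrized line bundles $\Lbar_1,\Lbar_2$, and Yuan/Chambert-Loir/Thuillier equidistribution to force the curvature (bifurcation) measures to agree at every place---is exactly the skeleton the paper uses. But you pass over the first serious obstacle: extending the escape-rate Green functions $G^\pm$ to continuous, semi-positive metrics at the points at infinity of the curve $C$. For cubic families the bifurcation locus need not be compact, and the sets $S^\pm\backslash S_0^\pm$ of degenerate poles can be nonempty, so the metric cannot be realized as a uniform limit of semi-positive metrics on a single line bundle---the paper devotes all of Section~3 (Lemmas~\ref{finitely many coefficients} and~\ref{continuity at bad point} in particular) to this. ``By the Call--Silverman theory, each $\hhat_i$ is the height of a semipositive adelic metrized line bundle'' is precisely the assertion that requires the hard analytic work; you cannot take it as given.

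The second and more serious gap is your algebraic-rigidity step. You gesture at holomorphic motions, Dujardin--Favre bifurcation currents, and ``a finite case analysis'' of shared dynamics, but none of these supplies a mechanism for turning the identity $\mu_{V,1,v}=\mu_{V,2,v}$ (or, more precisely, the resulting proportionality $\deg(D_{n_0}^-)\,G_v^+=\deg(D_{n_0}^+)\,G_v^-$) into an honest polynomial relation. The paper's route is concrete and indispensable: expand both critical orbits in B\"ottcher coordinates near each pole $t_0\in S_0^\pm$, show via Lemma~\ref{high order lemma} that suitable integer powers of the B\"ottcher coordinate agree up to a root of unity (here the non-archimedean places are used to force $|\zeta_{t_0,n}|_v=1$ for all $v$, hence $\zeta$ a root of unity---not to ``rule out accidental equality of Berkovich bifurcation measures''), and thereby produce an exact identity $P_+(c_n^+)=P_-(c_n^-)$ in $K(C)$ for all large $n$. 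One then invokes Medvedev--Scanlon's classification of plane curves invariant under $(f,f)$ to see the relation is a graph $x=g(y)$ with $g$ commuting with an iterate of $f_{a,b}$, and finally the Schmidt--Steinmetz/Nguyen classification of commuting polynomials (using absence of Julia-set symmetry when $a,b\neq 0$) to conclude $g$ is itself an iterate of $f_{a,b}$. Your proposal names none of these tools, and without them the passage from coincidence of bifurcation measures to a critical orbit relation is not a proof sketch but an unfilled claim.
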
  

We describe next the background for both our result and for the Dynamical Andr\'e-Oort Conjecture. 
Given a variety $X$, a subvariety $V\subset X$, and given a family $\mathcal{Y}$ of subvarieties $Y\subseteq X$, the principle of \emph{unlikely intersections} predicts that $$V\cap \bigcup_{\substack{Y\in\mathcal{Y}\\ \dim(Y)<\codim(V)}}Y$$ is not Zariski dense in $V$, unless $V$ satisfies some rigid geometric property mirroring the varieties contained in $\mathcal{Y}$. Special cases of this principle of unlikely intersections in arithmetic geometry are the Bombieri-Masser-Zannier \cite{BMZ99}, the Pink-Zilber and the Andr\'e-Oort conjectures; for a comprehensive discussion, see the beautiful book of Zannier \cite{Zannier-book}. Motivated by a version of the Pink-Zilber Conjecture for semiabelian schemes, Masser and Zannier (see \cite{M-Z-1, M-Z-2}) proved that in a non-constant elliptic family $E_t$ parametrized by $t\in \C$, for any two sections $\{P_t\}_t$ and $\{Q_t\}_t$, if there exist infinitely many $t\in \C$ such that both $P_t$ and $Q_t$ are torsion points on $E_t$, then the two sections are linearly dependent. 

The results of Masser-Zannier \cite{M-Z-1, M-Z-2} have natural dynamical reformulations using the Latt\'es maps associated to the elliptic curves in the family $\{E_t\}$. More generally, one can consider the following problem: given a curve $C$ defined over $\C$, given a family of rational functions $f_t\in \C(z)$  parametrized by the points $t\in C(\C)$, and given $a,b:C\lra \bP^1$, then one expects that there exist infinitely many $t\in C(\C)$ such that both $a(t)$ and $b(t)$ are preperiodic under the action of $f_t$ if and only if $a$ and $b$ are dynamically related with respect to the family $\{f_t\}$ in a precise manner.  The first result in this direction, for a family $f_t$ which was not induced by the endomorphism of an algebraic group was proven by Baker and DeMarco \cite{Baker-DeMarco}. They answered a question of Zannier, thus showing that for an integer $d\ge 2$, and for two complex numbers $a$ and $b$, if there exist infinitely many $t\in\C$ such that both $a$ and $b$ are preperiodic under the action of $z\mapsto z^d+t$, then $a^d=b^d$. Several new results followed (see \cite{GHT-ANT, Matt-Laura-2, GHT:preprint, AO1, AO2}), but still the question stated above remains open in its full generality; especially, it is particularly difficult to treat the case when $C$ is an arbitrary curve, even when dealing with families of polynomials. For example, in  \cite{GHT:preprint} (which is one of the very few articles treating the case when $C\ne \bP^1$), the family of rational functions must have exactly one degenerate point on $C$ and also the family $\{f_t\}$  must satisfy additional technical conditions. In \cite{AO2}, the parameter curve is arbitrary, but the result holds only for families of unicritical polynomials, which is again quite restrictive.  In this article, we release all the restrictions on the curve $C$, which parametrizes a family of  cubic polynomials. The dynamics of cubic polynomials is already significantly more involved than the dynamics of unicritical polynomials, and therefore our analysis is significantly more involved than in the article \cite{AO2}.

In \cite{Matt-Laura-2}, Baker and DeMarco posed a very general question for families of dynamical systems, which is motivated by the classical Andr\'e-Oort conjecture. As a dynamical analogue to the classical Andr\'e-Oort Conjecture, Baker and DeMarco's question asks that if a subvariety $V$ of the moduli space of rational maps of given degree contains a Zariski dense set of post-critically finite points, then $V$ itself is  cut out by \emph{critical orbit relations}, i.e., the critical points of the rational functions in the family $V$ are related dynamically (see condition~(2) in Theorem~\ref{precise version}; for more details,  see \cite{Matt-Laura-2}). We recall that a rational function $f$ is \emph{post-critically finite} (PCF) if each critical point of $f$ is preperiodic. Also, we recall the classical notation and definition from algebraic dynamics that for a rational function $f$, its $n$-th iterate is denoted by $f^n$, and that a point $c$ is \emph{preperiodic} if and only if there exist integers $0\le m<n$ such that $f^m(c)=f^n(c)$.

In this article we prove the first case of the Dynamical Andr\'e-Oort Conjecture (posed by Baker and DeMarco \cite{Matt-Laura-2}), solving completely their conjecture in the moduli space $\M_3$ of cubic polynomials. Furthermore, we  obtain a \emph{Bogomolov-type} statement for our result, i.e., the condition that a curve $C\subset \M_3$ contains infinitely many post-critically finite points can be relaxed to asking that $C$ contains infinitely many points of small critical height. We recall that for a polynomial $f\in \Qbar[z]$ of degree $d\ge 2$, the \emph{canonical height} of a point $c\in\Qbar$ is equal to 
\begin{equation}
\label{first canonical height}
\hhat_f(c):=\lim_{n\to\infty}\frac{h(f^n(c))}{d^n},
\end{equation} 
where $h(\cdot )$ is the usual Weil height; for more details on the canonical height we refer the reader to the article of Call and Silverman \cite{Call:Silverman}. Then for a polynomial $f\in \Qbar[z]$ of degree $d\ge 2$, we define the \emph{critical height} of a polynomial be 
\begin{equation}
\label{first critical height}
\hhat_{\crit}(f):=\sum_{i=1}^{d-1}\hhat_f(c_i),
\end{equation}
where the $c_i$'s are the critical points of $f$ (other than $\infty$). Clearly, $f\in \Qbar[z]$ is post-critically finite if and only if $\hhat_{\crit}(f)=0$.

\begin{theorem}\label{precise version}
Let $C\subset \C^2$ be an irreducible curve and let $f_{a,b}(z)=z^3-3a^2z+b$; then the two critical points of $f_{a,b}(z)$ are $\pm a$. The following statements are equivalent: 
\begin{itemize}
\item[(1)] there are infinitely many $(a,b)\in C$ with $f_{a,b}$ being post-critically finite. 
\item[(2)] $C$ is the line in $\C^2$ given by the equation $b=0$, or $C$ is an irreducible component of a curve defined by one of the following equations:
\begin{itemize}
\item[(i)] $f_{a,b}^n(a)=f^m_{a,b}(a)$ for some $n>m\geq 0$; or
\item[(ii)] $f_{a,b}^n(-a)=f^m_{a,b}(-a)$ for some $n>m\geq 0$; or 
\item[(iii)] $f^n_{a,b}(a)=f^m_{a,b}(-a)$ for some $n, m\geq 0$.
\end{itemize}
\item[(3)] $C$ is defined over $\Qbar$, and there is sequence of non-repeating points $(a_n, b_n)\in C(\Qbar)$ with $\lim_{n\to \infty}\hhat_{\crit}(f_{a_n, b_n})=0$. 
\end{itemize}
\end{theorem}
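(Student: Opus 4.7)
The implications I would establish in the cycle $(2)\Rightarrow(1)\Rightarrow(3)\Rightarrow(2)$, with essentially all the content lying in the last one. The implication $(1)\Rightarrow(3)$ is immediate: a post-critically finite $f_{a,b}$ satisfies $\hhat_{\crit}(f_{a,b})=0$ and lies in $\Qbar^2$ (post-critical finiteness is cut out by polynomial equations with integer coefficients), so an infinite PCF subset of $C$ forces $C$ to be defined over $\Qbar$ and supplies the required sequence.

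For $(2)\Rightarrow(1)$ I would treat the four possibilities in turn. On $\{b=0\}$ the polynomial $f_{a,0}$ is odd in $z$, hence $f_{a,0}^{n}(-a)=-f_{a,0}^{n}(a)$, so post-critical finiteness reduces to preperiodicity of $a$, which holds at a dense set of Misiurewicz parameters in the bifurcation locus of this one-parameter marked family. On a type (i) or (ii) component one critical point is persistently preperiodic on $C$, and density of preperiodic parameters in the bifurcation locus of the remaining marked critical point produces the required infinite set. On a type (iii) component the relation $f_{a,b}^{n}(a)=f_{a,b}^{m}(-a)$ forces $a$ preperiodic if and only if $-a$ is, so density of preperiodic parameters for $a$ again yields infinitely many PCF members.

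The main implication $(3)\Rightarrow(2)$ I would run on $C(\Qbar)$ using the two Call--Silverman canonical heights
\[
h_1(a,b):=\hhat_{f_{a,b}}(a), \qquad h_2(a,b):=\hhat_{f_{a,b}}(-a),
\]
so that $\hhat_{\crit}(f_{a,b})=h_1+h_2$, and the hypothesis forces $h_1(a_n,b_n),h_2(a_n,b_n)\to 0$. Each $h_i$ is the height attached to an adelic semipositive metrized line bundle (the escape-rate cocycle of the marked family $(f,\pm a)$), nonnegative, and vanishing exactly where the corresponding critical point is preperiodic. If $h_1\equiv 0$ on $C(\Qbar)$, then $C(\Qbar)$ is a countable union of subvarieties $\{f^n_{a,b}(a)=f^m_{a,b}(a)\}$, and irreducibility of $C$ forces it to lie in a single such component, realizing (2)(i); the symmetric conclusion holds for $h_2\equiv 0$. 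If both $h_1,h_2$ are nontrivial, I would apply arithmetic equidistribution (Baker--Rumely, Chambert-Loir, Favre--Rivera-Letelier, Yuan) to the sequence separately with respect to $h_1$ and $h_2$: at every place $v$ the Galois-averaged Dirac measures at $(a_n,b_n)$ converge to both bifurcation measures $\mu_{1,v}$ and $\mu_{2,v}$ on $C^{\mathrm{an}}_v$, forcing $\mu_{1,v}=\mu_{2,v}$. At the archimedean place this gives that $G_1-G_2$ is pluriharmonic off a compact subset of $C(\C)$, where $G_i(a,b)=\lim_n 3^{-n}\log^+|f_{a,b}^n(\pm a)|$; comparison of growth near the points of $C$ at infinity in $\P^2$ then upgrades this to $G_1\equiv G_2$ on $C(\C)$.

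The main obstacle is the final rigidity step converting $G_1\equiv G_2$ on $C$ into one of the algebraic conclusions in (2). The locus $\{b=0\}$ is an honest source of the equality $G_1\equiv G_2$ that is not of type (iii) --- the symmetry $f_{a,0}(-z)=-f_{a,0}(z)$ forces $G_1\equiv G_2$ for free --- so this branch must be extracted as a degenerate possibility. For $C\not\subset\{b=0\}$, the plan is to combine the non-archimedean equalities $\mu_{1,v}=\mu_{2,v}$ with matching of the B\"ottcher coordinates of $f_{a,b}$ at $\infty$ to show that the escape expansions at the two critical points agree; a Ritt/DeMarco-type classification of cubic polynomials whose two critical escape potentials coincide then concludes that the two critical orbits eventually collide, i.e., $f_{a,b}^{n}(a)=f_{a,b}^{m}(-a)$ identically on $C$ for some $n,m\geq 0$. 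Carrying out this rigidity in the genuinely two-dimensional cubic setting --- where the critical structure and possible symmetries are substantially richer than in the unicritical case treated in \cite{AO2} --- is where the bulk of the technical work resides.
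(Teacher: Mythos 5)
Your overall architecture agrees with the paper's: $(1)\Rightarrow(3)$ and $(2)\Rightarrow(1)$ are essentially identical (the paper uses Proposition~\ref{infinite preperiodic points} for the density of preperiodic parameters), and the opening of $(3)\Rightarrow(2)$ — split off the case where a critical point is persistently preperiodic, then apply adelic equidistribution to the remaining situation — is the right skeleton. Two points, though.

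First, a technical but consequential imprecision: equidistribution gives equality of the \emph{normalized} measures, so what you get is $\mu_{1,v}/\deg(D^+_{n_0})=\mu_{2,v}/\deg(D^-_{n_0})$, not $\mu_{1,v}=\mu_{2,v}$. The paper therefore proves a proportionality $\deg(D^-_{n_0})\,G^+_v=\deg(D^+_{n_0})\,G^-_v$ (Corollary~\ref{proportional escape cor}), not $G^+\equiv G^-$, and it must carry this rational constant through the whole final argument. Your plan to upgrade harmonicity of $G_1-G_2$ to vanishing by growth comparison at infinity also needs this constant; the paper instead uses McMullen's universality of the Mandelbrot set (Proposition~\ref{proportional escape-rate}) to kill the harmonic difference on the non-escape interior.

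Second and more seriously, the final rigidity step — which you yourself flag as ``where the bulk of the technical work resides'' — is not actually an argument in your proposal. Having proportional Green functions does \emph{not} directly imply that the two critical orbits collide; that implication is exactly what must be proved, and nothing like a ``Ritt/DeMarco-type classification of cubic polynomials whose two critical escape potentials coincide'' is an off-the-shelf statement. The paper's route is concrete and different from what you gesture at: from the proportionality of Green functions one gets, near each $t_0\in S_0^\pm$, an identity $\Phi_{a,b}^{\deg(D^-_{n_0})}(c_n^+)=\zeta_{t_0,n}\,\Phi_{a,b}^{\deg(D^+_{n_0})}(c_n^-)$ in the B\"ottcher coordinate; one then shows $\zeta_{t_0,n}$ is a root of unity by checking $|\zeta_{t_0,n}|_v=1$ at all places; raising to a suitable power kills $\zeta$ and, truncating the B\"ottcher expansion (Lemma~\ref{high order lemma}), one obtains an honest polynomial relation $P_+(c_n^+)=P_-(c_n^-)$ in $K(C)$ for all large $n$. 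The decisive ingredient is then the Medvedev--Scanlon classification \cite[Theorem~6.24]{M-S-1} of $(f,f)$-periodic plane curves, which forces a component of $\{P_+(x)=P_-(y)\}$ to be a graph $x=g(y)$ or $y=g(x)$ with $g$ commuting with an iterate of $f_{a,b}$; finally the Schmidt--Steinmetz/Nguyen result on commuting polynomials (using that for generic $t$ on $C$ the Julia set of $f_{a(t),b(t)}$ has no symmetries) shows $g$ is itself an iterate of $f_{a,b}$, which is precisely condition (2)(iii), with $a=0$ and $b=0$ emerging as the degenerate cases. Without the Medvedev--Scanlon input (or an equivalent classification of invariant curves), your sketch has no mechanism to pass from an analytic identity of Green functions to an algebraic orbit relation, so the proposal as written does not close the loop.
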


For the case $b=0$, the two critical points $\pm a$ and also their orbits are related by $\sigma(z)=-z$, where $\sigma$ is the unique symmetry of the Julia set of $f_{a,0}$ with $f_{a,0}\circ \sigma=\sigma\circ f_{a,0}$.  We observe that condition~(1) easily yields condition~(3) in Theorem~\ref{precise version} since each post-critical point in $\M_3$ corresponds to a polynomial with algebraic coefficients, and furthermore (as shown in \cite{Call:Silverman}), a point is preperiodic if and only if its canonical height equals $0$. Also, a curve of the form~(2) as in the conclusion of Theorem~\ref{precise version} contains infinitely many PCF points. Therefore, the difficulty in Theorem~\ref{precise version} lies with proving that it is \emph{only} curves cut out by orbit relations which contain an infinite sequence of points of critical height converging to $0$.

A key ingredient of  our article (and also of all of the articles previously mentioned dealing with the problem of unlikely intersection in algebraic dynamics) is the arithmetic equidistribution of small points on an algebraic variety (in the case of $\P^1$, see \cite{Baker-Rumely06, favre-rivera06}, in the general case of  curves, see \cite{CLoir, Thuillier}, while for arbitrary varieties, see \cite{Yuan}). One of the significant technical difficulties of this article is to show the equidistribution of preperiodic parameters for a marked critical point of cubic polynomials on an arbitrary curve $C$, \emph{not necessarily containing infinitely PCF points}.  Precisely, it is delicate to prove the continuity of potential functions of the bifurcation measures at infinity points, especially when the bifurcation locus is not compact. Moreover, because the sets $S^\pm\backslash S_0^\pm$ in (\ref{pole set}) can be nonempty, \emph{unlike} all of the previous articles on this theme, we cannot obtain the  desired metrized line bundle from a uniform limit of semi-positive metrics on the same line bundle. See Remark \ref{remark 11} for the following result.
\begin{theorem}
Let $C$ be an irreducible plane curve defined over a number field $K$, such that $\pm a$ is not persistently preperiodic for $f_{a,b}$ on $C$. The set of preperiodic parameters 
   $$\textup{Preper}_\pm:=\{t\in C: ~ \pm a(t) \textup{ is preperiodic for } f_{a(t), b(t)}\}$$
is equidistributed with respect to bifurcation measure $\mu_\pm$. More precisely, for any sequence of non-repeating points $t_n\in \textup{Preper}_+$, the discrete probability measures 
    $$\mu_n=\frac{1}{|\Gal(\Kbar/K) \cdot t_n|}\sum_{t\in \Gal(\Kbar/K) \cdot t_n}\delta_t$$
converge weakly to the normalized measure $\mu_+/\mu_+(C)$; similarly for $\textup{Preper}_-$ and $\mu_-/\mu_-(C)$.  
\end{theorem}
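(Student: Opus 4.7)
The plan is to apply Yuan's arithmetic equidistribution theorem on the smooth projective completion $\overline{C}$ of $C$, by constructing an adelically metrized line bundle $(L^+,\|\cdot\|_+)$ on $\overline{C}$ whose associated canonical height is proportional to $t\mapsto \hhat_{f_t}(a(t))$. The statement for $\textup{Preper}_-$ and $\mu_-$ is symmetric, with $-a$ in place of $a$, so I focus on $\textup{Preper}_+$.

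For each place $v$ of $K$, I would define the local Green's function
$$G_v^+(t)=\lim_{n\to\infty}\frac{1}{3^n}\log^+|f_t^n(a(t))|_v$$
on $C(\C_v)\setminus S^+$. Each $G_v^+$ is non-negative, continuous, and subharmonic; summing over all $v$ recovers $\hhat_{f_t}(a(t))$ for $t\in C(\Kbar)$, while at the archimedean place the bifurcation measure $\mu_+$ is by definition $dd^c G_\infty^+$. The metric $\|\cdot\|_+$ would be specified by local potentials $-G_v^+$, normalized against a reference ample metric coming from an embedding of $\overline{C}$, so that its curvature current recovers $\mu_+$ and its height coincides with $\hhat_{f_t}(a(t))$.

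The main obstacle, flagged in the introduction, is that the pole set $S^+\setminus S_0^+$ can be non-empty, so $G_v^+$ acquires genuine logarithmic singularities at certain boundary points of $\overline{C}\setminus C$. The naive metric defined by the uniform limit of the semi-positive metrics attached to the truncations $\frac{1}{3^n}\log^+|f_t^n(a(t))|_v$ therefore does not live on a single fixed line bundle, and the usual equidistribution machinery must be modified. The proposal is to build $L^+$ as an ample line bundle twisted by an effective divisor $D^+$ supported on $S^+\setminus S_0^+$, whose coefficients equal the singularity orders of $G_v^+$ at the boundary points. The technical heart of the proof is then (i) a Branner--Hubbard style local analysis of the escape rate of the family $f_t$ near each degenerate parameter $t_0\in S^+\setminus S_0^+$, yielding a Puiseux expansion from which one reads off the singularity order of $G_v^+$ and shows that this order is a rational number independent of $v$, so that $D^+$ is a genuine divisor defined over $\Kbar$; and (ii) verifying that the resulting adelic metric is continuous at every $t_0$ after the twist, and semi-positive in Zhang's sense at every place, even though it is \emph{not} the uniform limit of semi-positive metrics on a fixed line bundle. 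Step (ii) demands uniform non-archimedean control of the escape rate at all but finitely many places $v$, which comes from the explicit polynomial structure of $f_{a,b}$ and an integral model of $C$.

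Granting this construction, the equidistribution is immediate from Yuan's theorem. The hypothesis that $a$ is not persistently preperiodic on $C$ gives $\deg L^+=\mu_+(C)>0$; any $t_n\in\textup{Preper}_+$ satisfies $\hat{h}_{L^+}(t_n)=\hhat_{f_{t_n}}(a(t_n))=0$; and hence the Galois probability measures $\mu_n$ converge weakly to the archimedean $c_1$-current of $(L^+,\|\cdot\|_+)$ normalized to total mass $1$, which is precisely $\mu_+/\mu_+(C)$.
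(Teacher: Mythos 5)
Your high-level strategy --- build an adelically metrized line bundle on $\overline{C}$ whose archimedean curvature is $\mu_+$ and whose height is proportional to $t\mapsto\hhat_{f_t}(a(t))$, then invoke arithmetic equidistribution --- is exactly the paper's plan (Section 3 constructs the metric, Section 4 applies the equidistribution theorem). But there is a concrete mix-up in the construction: you place the twisting divisor on $S^+\setminus S_0^+$ and assert that $G_v^+$ acquires genuine logarithmic singularities there. The paper proves the opposite. Theorem 3.1(i), via Lemma 3.5, shows that $G^+$ extends \emph{continuously} across every $t_0\in S^+\setminus S_0^+$ with value $0$ there; no singularity, no divisor contribution. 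The genuine log poles of $G^+$ occur at $S_0^+$ --- the parameters where $\gamma_n=-\ord_{t_0}c_n\to\infty$ --- and this is where the divisor lives: $D_n^+=\sum_{t_0\in S_0^+}\gamma_n\cdot t_0$, with integer coefficients given by the pole orders of the $n$-th critical orbit (Section 3.2). Twisting by a divisor on $S^+\setminus S_0^+$, as you propose, would leave the true singularities at $S_0^+$ unkilled and the metric discontinuous, while introducing a spurious twist at points where the Green's function is already bounded.

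You have also slightly inverted the nature of the obstacle. The issue with $S^+\setminus S_0^+$ being nonempty is not that $G^+$ is singular there, but that the truncations $\tfrac{1}{3^n}\log^+|c_n|$ \emph{are} singular there (since $c_n$ has a bounded-order pole at those points), so the line bundle carried by the $n$-th truncated metric differs from the limiting one. Hence one cannot realize the limit as a uniform limit of semi-positive metrics on a single fixed line bundle, and one has to prove directly (Lemma 3.5, a delicate shrinking-radius estimate) that these finite-order poles wash out and $G^+\to 0$ continuously at such $t_0$. Once that is fixed, the rest of your outline matches the paper: semi-positivity at all places via the same local estimates at non-archimedean $v$, the Ingram-type lemma to get an adelic model, ampleness from $S_0^+\neq\emptyset$ (equivalently, $+a$ not persistently preperiodic, Lemma 2.3), zero essential minimum from Proposition 2.4, and Yuan/Thuillier equidistribution.
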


\noindent{\bf Outline of the article}. Using the arithmetic equidistribution theorem \cite{CLoir, Yuan} we show that preperiodic parameters for a marked critical point equidistribute on the curve with respect to the bifurcation measure; see Theorem \ref{equidistribution of parameters}. Assuming there exist infinitely many points $(a,b)$ on the plane curve $C$ such that $z^3 - 3a^2z+b$ is PCF, then the potential (escape-rate) functions for the bifurcation measures (with respect to the starting points $\pm a$) are proportional to each other; see Corollary~\ref{proportional escape cor}. Finally, using the classification of Medvedev-Scanlon \cite{M-S-1} for invariant plane curves under the coordinatewise action of polynomials, we derive a polynomial relation between the critical points of the cubic polynomial; see Theorem~\ref{algebraic relation}.

\medskip

{\bf Acknowledgments.} We are indebted to Holly Krieger and to Khoa Nguyen for numerous stimulating discussions which helped us considerably while preparing this manuscript.  We also thank Laura DeMarco,  Xiaoguang Wang and Xinyi Yuan for many useful comments and suggestions which improved our presentation. At the time we were finishing writing this article, we learned that Charles Favre and Thomas Gauthier have independently obtained a proof of Theorem \ref{DAO cubic} using a different approach; we are grateful to both of them for sharing with us their preprint.

\section{Dynamics of cubic polynomials}
\label{section dynamics cubics}

In this section, we first introduce the moduli space $\M_3$ of cubic polynomials, and then study the dynamics of the two critical points of the cubic polynomials. 

\subsection{Moduli space of cubic polynomials} Let $f:\C \to \C$ be a polynomial of degree $d\geq 2$. Two polynomials $f$ and $g$ are conjugate to each other if there is an affine map $A(z)=az+b$ with $f=A^{-1}\circ g\circ A$. The {\em moduli space}  $\M_d$ of polynomials consists of conjugacy classes $[f]$ of polynomials of $d\geq 2$, and $\M_d$ is an affine space of dimension $d-1$. Moreover, it is well known that PCF points in $\M_d$ are countable and Zariski-dense; in particular, each PCF polynomial has coefficients in $\Qbar$. 

In this article, we focus mainly on the moduli space $\M_3\simeq\C^2$ of degree three polynomials. 
Each cubic polynomial can be conjugate to a monic and \emph{centred} polynomial, i.e. 
  $$f_{a, b}(z):=z^3-3a^2z+b$$
with two critical points $\pm a$ (counted with multiplicity). Two polynomials $f_{a_1, b_1}, f_{a_2, b_2}$ of such form are in the same conjugacy class if and only if  $a_1=\pm a_2$ and $b_1=\pm b_2$. Hence the map from $\M_3^{cm}\simeq \C^2$ to $\M_3$, given by $f_{a,b}\to [f_{a,b}]$ is a $4$-to-$1$ map with ramification when $a=0$ or $b=0$. The two critical points of polynomials $f_{a,b}\in \M_3^{cm}$ have been marked as $\pm a$, so we call $\M_3^{cm}$ the moduli space of cubic polynomials with marked critical points.  As $+a$ and $-a$ share lots of properties under the iteration of $f_{a,b}$, we focus on the study of the iterates of $+a$. 

\subsection{Order of poles} Let $C$ be an irreducible curve in $\C^2$. By abuse of notation, we also denote by $C$ the normalization of its Zariski closure in $\P^2$; hence we view $C$ also as a smooth projective curve. Let $c_n:=f^n_{a,b}(a)$, which is a rational map $C\lra \bP^1$; also, $c_n$ can be viewed as a polynomial in the two rational maps $a,b:C\lra \bP^1$. Finally, we note that \emph{not both} $a$ and $b$ are constant on $C$.

\begin{lemma}\label{bounded degree} On $C$, the marked critical point $+a$ is persistently preperiodic under $f_{a,b}$, if and only if the degree of $c_n\in \C(C)$ is uniformly bounded for $n\geq 1$. 
\end{lemma}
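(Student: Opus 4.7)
The plan is as follows. The forward direction is immediate: if $+a$ is persistently preperiodic on $C$, then by definition there exist integers $0\le m<n$ with $c_m=c_n$ as elements of $\C(C)$; applying $f_{a,b}$ repeatedly yields $c_{m+k}=c_{n+k}$ for every $k\ge 0$, so the sequence $\{c_k\}_{k\ge 0}$ takes only finitely many values in $\C(C)$, and in particular has uniformly bounded degree.

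For the converse, I work over the function field $K:=\C(C)$, viewing $f:=f_{a,b}\in K[z]$ as a polynomial of degree $3$ and $a,b,c_n$ as elements of $K$. The central tool is the function-field canonical height
\[
\hat h_f(c):=\lim_{n\to\infty}\frac{\deg f^n(c)}{3^n},\qquad c\in K.
\]
A telescoping argument in the spirit of Call--Silverman shows that the limit exists, the functional equation $\hat h_f(f(c))=3\hat h_f(c)$ holds, and $|\hat h_f(c)-\deg(c)|\le M$ for a constant $M=M(f)$. Consequently the hypothesis $\deg(c_n)\le D$ gives
\[
3^n\,\hat h_f(a)=\hat h_f(c_n)\le \deg(c_n)+M\le D+M
\]
uniformly in $n$, forcing $\hat h_f(a)=0$.

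The conclusion then follows from the Northcott-type theorem for function-field canonical heights (Benedetto and others): for a polynomial $f\in K[z]$ of degree $\ge 2$ in characteristic zero that is not $\bar K$-conjugate to a polynomial over $\C$, one has $\hat h_f(c)=0$ if and only if $c$ is preperiodic under $f$ in $\bar K$. I verify non-isotriviality of $f_{a,b}$ on $C$ by a direct Vieta computation on the fixed-point equation $z^3-(3a^2+1)z+b=0$: the three elementary symmetric functions of the multipliers $f'(z_1),f'(z_2),f'(z_3)$ come out to $9a^2+6$, $\,18a^2+9$, and $-27\bigl((2a^3+a)^2-b^2\bigr)$, at least one of which is non-constant on $C$ whenever $a$ or $b$ is non-constant there.

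The main obstacle is this backward implication: passing from bounded-degree growth to an honest algebraic identity $c_m=c_n$ cannot be done by pole-order bookkeeping alone (which would only show that the $c_n$ all lie in a common finite-dimensional $\C$-vector space $\mathcal{L}(D\cdot E)$, with $E$ the pole divisor of $a$ and $b$), and instead requires the canonical-height machinery together with the non-isotrivial Northcott statement. The remaining steps are routine.
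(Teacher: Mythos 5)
Your proof is correct and follows the same approach the paper takes (the paper simply cites Baker's function-field finiteness theorem \cite{Baker} and DeMarco \cite{DeMarco: heights}, noting non-isotriviality because not both $a$ and $b$ are constant on $C$). You fill in the details: the easy direction via a finiteness-of-values argument, and the hard direction via the function-field canonical height, the comparison $|\hat h_f - \deg|\le M$, and the non-isotrivial Northcott statement, with non-isotriviality checked explicitly through the fixed-point multiplier spectrum rather than inferred from the quasi-finiteness of $(a,b)\mapsto[f_{a,b}]$ as the paper implicitly does. One small point: in the forward direction, ``persistently preperiodic'' a priori only says $a(t)$ is preperiodic for each $t\in C(\C)$, and to get a single identity $c_m=c_n$ in $\C(C)$ you should note that $C(\C)=\bigcup_{m<n}\{c_m=c_n\}$ is a countable union of Zariski-closed subsets of the irreducible curve $C$, so one of them is all of $C$; this is immediate but worth saying.
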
 
\proof See \cite{Baker} and also \cite{DeMarco: heights}; note that the family of polynomials $z^3-3a^2z+b$ is not isotrivial since noth both $a$ and $b$ are constant functions in $\C(C)$. \qed

Let $t_0\in C(\C)$ which is a pole of either $a$ or $b$ on the smooth projective curve $C$. Note that since not both $a$ and $b$ are constant on $C$, then there must exist such a pole $t_0\in C$. Let  $\gamma_a:=-\ord_{t_0} a$,  $\gamma_b:=-\ord_{t_0} b$ and
\begin{equation}\label{gamma n}
 \gamma_{\max}:=\max\{3\gamma_a, \gamma_b\}>0, ~\textup{and } \gamma_n:=-\ord_{t_0} c_n.
 \end{equation}
 
\begin{lemma}\label{order of pole growth}
For a point $t_0\in C$ with $\gamma_{\max}>0$, we have
\begin{itemize}
\item if $\gamma_{\max}<3 \gamma_{n_0}$ for some $n_0\geq 1$, then for all $n\geq n_0$, $\gamma_n=3^{n-n_0}\cdot \gamma_{n_0}$. In particular, if $\lim_{t\to t_0}2a^3(t)/b(t)\neq 1$, then $\gamma_n=3^{n-1}\cdot \gamma_{\max}$. 
\item if $\lim_{n\to \infty} \gamma_n\neq \infty$, then $\gamma_b=3\gamma_a$ and $\gamma_n=\gamma_a$ for all $n\geq 1$. 
\end{itemize}
\end{lemma}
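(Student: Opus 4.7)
The plan is to track, for each iterate, the pole orders at $t_0$ of the three summands in the recursion
\[
  c_{n+1} = c_n^3 - 3a^2 c_n + b.
\]
These orders are respectively $3\gamma_n$, $2\gamma_a+\gamma_n$, and $\gamma_b$, so in every case
\[
  \gamma_{n+1}\ \le\ \max\{3\gamma_n,\ 2\gamma_a+\gamma_n,\ \gamma_b\},
\]
with equality whenever the maximum is attained by a unique term (no leading-coefficient cancellation is then possible). All of the lemma is extracted by careful use of this inequality.

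For part~(1), the hypothesis $3\gamma_{n_0}>\gamma_{\max}=\max\{3\gamma_a,\gamma_b\}$ unpacks into $\gamma_{n_0}>\gamma_a$ and $3\gamma_{n_0}>\gamma_b$, so $c_{n_0}^3$ is the strict maximum and $\gamma_{n_0+1}=3\gamma_{n_0}$. Strict domination then propagates, and the formula $\gamma_n=3^{n-n_0}\gamma_{n_0}$ follows by induction. For the ``in particular'' clause I would specialize to $n_0=1$ after computing $\gamma_1$ directly from $c_1=-2a^3+b$: comparing the orders $3\gamma_a$ and $\gamma_b$ of its two summands, $\gamma_1=\gamma_{\max}$ holds unless $3\gamma_a=\gamma_b$ \emph{and} the leading coefficients of $-2a^3$ and $b$ coincide, i.e.\ $\lim_{t\to t_0}2a^3(t)/b(t)=1$. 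Under the complementary assumption $\gamma_1=\gamma_{\max}$, hence $3\gamma_1>\gamma_{\max}$, and we are reduced to the previous sentence.

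Part~(2) I would prove by contradiction, systematically ruling out every possibility except $\gamma_b=3\gamma_a$ together with $\gamma_n=\gamma_a$. If $\gamma_b\ne 3\gamma_a$, then $c_1=-2a^3+b$ has a unique dominant summand, so $\gamma_1=\gamma_{\max}$, which by part~(1) drives $\gamma_n\to\infty$; this kills both $\gamma_b>3\gamma_a$ and $\gamma_b<3\gamma_a$. Hence $\gamma_b=3\gamma_a$ and $\gamma_{\max}=3\gamma_a$. Boundedness of $(\gamma_n)$ combined with part~(1) then forces $\gamma_n\le\gamma_{\max}/3=\gamma_a$ for every $n\ge 1$, otherwise part~(1) triggers blow-up. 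Conversely, if $\gamma_n<\gamma_a$ for some $n$, then $\gamma_b=3\gamma_a$ becomes the unique maximum of $\{3\gamma_n,\ 2\gamma_a+\gamma_n,\ \gamma_b\}$, so $\gamma_{n+1}=\gamma_b=3\gamma_a>\gamma_{\max}/3$, again triggering blow-up. Combining the two bounds gives $\gamma_n=\gamma_a$ for all $n\ge 1$.

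The main obstacle I anticipate is the bookkeeping in the boundary case $3\gamma_a=\gamma_b$ of part~(1), where the top-order terms of $-2a^3$ and $b$ can genuinely cancel so that $\gamma_1$ drops strictly below $\gamma_{\max}$. This is precisely what makes both the ``in particular'' clause of~(1) and the dichotomy in~(2) nontrivial: the whole argument hinges on reading off, from the scalar limit $\lim_{t\to t_0}2a^3(t)/b(t)$, whether or not such a cancellation occurs at the first step, and then on propagating that information through the recursion without further losses.
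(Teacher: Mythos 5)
Your argument is correct and follows the same route as the paper: both track pole orders through the recursion $c_{n+1}=c_n^3-3a^2c_n+b$, observe that $3\gamma_n>\gamma_{\max}$ forces $\gamma_{n+1}=3\gamma_n$ and hence blow-up, compute $\gamma_1$ directly from $c_1=-2a^3+b$ via the limit condition, and rule out $\gamma_{n_1}<\gamma_a$ in part (2) by noting it forces $\gamma_{n_1+1}=\gamma_b=\gamma_{\max}$ and hence blow-up. Your explicit three-way comparison $\gamma_{n+1}\le\max\{3\gamma_n,\,2\gamma_a+\gamma_n,\,\gamma_b\}$ just makes the paper's implicit bookkeeping visible.
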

\proof From the fact that 
\begin{equation}\label{induction formula}
c_{n+1}=f_{a,b}(c_n)=c_n^3-3a^2c_n+b,
\end{equation}
it is clear that if $3\gamma_n>\gamma_{\max}$ then $\gamma_{n+1}=3\gamma_n$. Consequently, if $3\gamma_{n_0}>\gamma_{\max}$, then $\gamma_n=3^{n-n_0}\cdot \gamma_{n_0}$. When $\lim_{t\to t_0}2a^3(t)/b(t)\neq 1$, $\gamma_1=-\ord_{t_0} (b-2a^3)=\gamma_{\max}$, hence $\gamma_n=3^{n-1}\cdot \gamma_{\max}$ for all $n\geq 1$. 

Now suppose that $\lim_{n\to \infty} \gamma_n\neq \infty$. From the above analysis we get $\lim_{t\to t_0}2a^3(t)/b(t)=1$ and then $\gamma_b=3\gamma_a=\gamma_{\max}$. Moreover, we have $\gamma_n\leq \gamma_a$ for all $n$. If there is some $n_1$ with $\gamma_{n_1}<\gamma_a$, then from equation (\ref{induction formula}), $\gamma_{n_1+1}=\gamma_{\max}$. Let $n_0=n_1+1$, then $3\gamma_{n_0}>\gamma_{\max}$ and consequently $\gamma_n=3^{n-n_0}\cdot \gamma_{\max}$ would tend to infinity as $n\to \infty$, which is a contradiction.
\qed

From the behaviour of the order of poles in Lemma \ref{order of pole growth}, we define the following sets in the smooth projective curve $C$
\begin{equation}\label{pole set}
S^+:=\{t_0\in C| \gamma_{max}>0\}\textup{ and } S_0^+:=\{t_0\in C| \lim_{n\to \infty} \gamma_n=\infty\}
\end{equation}
It is clear that $S_0^+$ is subset of $S^+$, and similarly we can define $S_0^-\subset S^-\subset C$ for the marked critical point $-a$. Since all the poles of $c_n\in \C(C)$ are in $S^+$, then Lemma \ref{bounded degree} yields the following result.
\begin{lemma}\label{non empty poles}
The marked critical point $+a$ (resp., $-a$) is persistently preperiodic under $f_{a, b}$ on $C$, if and only if $S_0^+$ (resp., $S_0^-$) is empty.  
\end{lemma}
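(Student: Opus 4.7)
The plan is to deduce the claim directly from the two preceding lemmas: Lemma \ref{bounded degree} reduces persistent preperiodicity to the uniform boundedness of $\deg c_n$ as $n$ varies, while Lemma \ref{order of pole growth} gives a complete dichotomy for how the pole orders $\gamma_n(t_0)$ behave at each $t_0 \in S^+$. Bridging the two is the identity between the degree of a rational map on a smooth projective curve and the sum of its pole orders.

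First I would observe that every pole of $c_n$ on $C$ lies in $S^+$. Indeed, $c_n$ is a polynomial expression in the rational functions $a$ and $b$ on $C$, so it can only fail to be regular at a point $t_0$ where at least one of $a,b$ has a pole, i.e.\ where $\gamma_{\max}(t_0) > 0$. Therefore
\[
\deg c_n \;=\; \sum_{t_0 \in C} \max\{-\ord_{t_0} c_n,\,0\} \;=\; \sum_{t_0 \in S^+} \max\{\gamma_n(t_0),\,0\}.
\]

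Next I would apply Lemma \ref{order of pole growth} pointwise. For each $t_0 \in S^+$, the second bullet of the lemma gives the sharp alternative: either $t_0 \in S_0^+$, in which case $\gamma_n(t_0) \to \infty$ (and by the first bullet this growth is ultimately geometric with ratio $3$), or $t_0 \notin S_0^+$, in which case $\gamma_b = 3\gamma_a$ and $\gamma_n(t_0) = \gamma_a(t_0)$ for all $n \geq 1$, a constant independent of $n$.

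The two implications now fall out immediately. If $S_0^+ = \emptyset$, then the dichotomy forces $\gamma_n(t_0) = \gamma_a(t_0)$ for every $t_0 \in S^+$ and every $n \geq 1$, so
\[
\deg c_n \;\leq\; \sum_{t_0 \in S^+} \gamma_a(t_0),
\]
a bound independent of $n$; Lemma \ref{bounded degree} then yields persistent preperiodicity of $+a$. Conversely, if $S_0^+ \neq \emptyset$, pick any $t_0 \in S_0^+$; then $\deg c_n \geq \gamma_n(t_0) \to \infty$, so the degrees are unbounded and Lemma \ref{bounded degree} rules out persistent preperiodicity. The case of $-a$ is identical by the obvious symmetry $a \mapsto -a$.

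The proof is essentially a bookkeeping exercise once the degree-equals-sum-of-pole-orders identity is in place; the only step requiring any verification is that no spurious poles of $c_n$ arise outside $S^+$, and this is immediate from the polynomial dependence of $c_n$ on $a$ and $b$. I do not anticipate a serious obstacle here, as all the real analytic work has already been done in Lemma \ref{order of pole growth}.
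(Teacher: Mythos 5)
Your proof is correct and takes essentially the same route as the paper's, which simply notes that all poles of $c_n$ lie in $S^+$ and invokes Lemma \ref{bounded degree}. You have filled in the bookkeeping the paper leaves implicit — the degree-equals-sum-of-pole-orders identity and the dichotomy from Lemma \ref{order of pole growth} (constant pole order $\gamma_a$ off $S_0^+$, unbounded growth on $S_0^+$) — but the underlying argument is identical.
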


%%%%%

\subsection{Escape-rate function and bifurcation} For the marked critical points $\pm a$, the {\em escape-rate functions} $G^\pm$ are given by 
 \begin{equation}\label{escape-rate definition}
 G^\pm (a, b):=\lim_{n\to \infty}\frac{1}{3^n}\log^+|f^n_{a,b}(\pm a)|,
 \end{equation}
for $(a,b)\in \C^2$, where $\log^+ |z|:=\max\{\log |z|, 0\}$. In the above formula, the convergence is local and uniform on $\C^2$; see the proof of Lemma \ref{convergence 1}. Then the escape-rate functions $G^+(a,b)$ are  continuous and plurisubharmonic on $\C^2$, and then they are subharmonic when restricted on an irreducible curve $C\subset \C^2$. The escape-rate function $G^{\pm}(a, b)\geq 0$ with equality if and only if the iterates $f_{a,b}^n(\pm a)$ of the  critical point $\pm a$ do not tend to infinity as $n$ tends to infinity. 

The {\em bifurcation measures} $\mu_{\pm}$ on $C$ corresponding to marked critical points $\pm a$ are given by 
\begin{equation}\label{bifurcation measures}
\mu_\pm:=dd^c G^\pm.
\end{equation}
The {\em bifurcation locus} $\Bif_\pm$ for marked critical points $\pm a$  is the set of parameters on $C\subset \C^2$,  such that 
  $$\{f^n_{a(t), b(t)}(\pm a(t))\}_{n\geq 1}$$
 is not a normal family on any small neighbourhood of such parameters. Actually, $\Bif_\pm\subset C$ is the boundary of the parameters $t\in C$ with $G^\pm(a(t), b(t))=0$, and then from the continuity of the escape-rate functions it has $G^\pm=0$ on $\Bif_\pm$. The supports of the bifurcation measures $\mu_\pm$ on $C$ are exactly $\Bif_\pm$. The bifurcation locus $\Bif$ on $C$ is the union of $\Bif_+$ and $\Bif_-$, and it is the set of parameters where the dynamical system $f_{a(t), b(t)}(z)$ is unstable when we perturb the parameter $t$. 

\begin{prop}\label{infinite preperiodic points}
For any irreducible curve $C\subset \C^2$, there are infinitely many points $(a,b)\in C$ such that $+a$ (or $-a$) is preperiodic under the iteration of $f_{a,b}$. 
\end{prop}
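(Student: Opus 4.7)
The argument splits into two cases depending on whether $+a$ is persistently preperiodic for $f_{a,b}$ along $C$. In the persistent case every parameter in $C$ gives a preperiodic $+a$ and the statement is trivial, so I may assume that $+a$ is not persistently preperiodic; by Lemma~\ref{non empty poles} the set $S_0^+$ is then nonempty.

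The first main step is to show that the bifurcation measure $\mu_+$ has strictly positive total mass on $C$ and carries no atoms. By Lemma~\ref{order of pole growth}, for each $t_0\in S_0^+$ the pole orders satisfy $\gamma_n(t_0)=3^{n-n_0}\gamma_{n_0}$ for all $n\geq n_0$, so in a local coordinate $u$ around $t_0$ one obtains
\[
3^{-n}\log^+|c_n(t)|\;\longrightarrow\;-\frac{\gamma_{n_0}}{3^{n_0}}\log|u|+O(1),
\]
and $G^+$ has a genuine logarithmic singularity at each $t_0\in S_0^+$. Since $\mu_+$ is the weak limit of $3^{-n}\,dd^c\log^+|c_n|$, the Poincar\'e--Lelong formula (applied on the compact Riemann surface $C$) yields
\[
\mu_+(C)\;=\;\lim_{n\to\infty}\frac{\deg c_n}{3^n}\;=\;\sum_{t_0\in S_0^+}\frac{\gamma_{n_0(t_0)}(t_0)}{3^{n_0(t_0)}}\;>\;0.
\]
Continuity of $G^+$ then forces $\mu_+$ to have no atoms, so $\Bif_+=\supp\mu_+$ is a nonempty perfect (in particular uncountable) subset of $C$.

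The second main step is to show that preperiodic parameters are dense in $\Bif_+$ via a classical Montel argument. Fix $t_1\in\Bif_+$ and a small disk $U\subset C$ around $t_1$. Choose any $t^*\in U$ and three distinct simple periodic points $p_1^*,p_2^*,p_3^*$ of $f_{a(t^*),b(t^*)}$; such triples always exist because a cubic polynomial has infinitely many periodic cycles and only finitely many non-simple periodic points of any given period. By the implicit function theorem, after shrinking $U$ the $p_i^*$ extend to pairwise distinct holomorphic sections $p_i\colon U\to\C$ with $f_{a(t),b(t)}^{N_i}(p_i(t))=p_i(t)$ for all $t\in U$. Because $U$ meets $\Bif_+=\supp\mu_+$, the family $\{c_n\}_{n\geq1}$ fails to be normal on $U$; Montel's theorem with three moving target sections then forces $c_n(t_0)=p_i(t_0)$ for some $n$, $i$, and $t_0\in U$, and this $t_0$ is a parameter at which $+a$ is preperiodic. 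Applying this argument to a shrinking sequence of neighborhoods of $t_1$ produces infinitely many distinct preperiodic parameters.

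The main technical point, though entirely classical, is the Montel step: one must verify both that $\{c_n\}$ fails to be a normal family on any open subset of $C$ meeting $\Bif_+$, and that Montel's theorem can be applied with three moving (rather than constant) target sections. Both are standard tools in holomorphic dynamics, but some care is needed to arrange the three sections $p_i$ simultaneously on a common domain intersecting $\Bif_+$ and to ensure they remain pairwise disjoint there.
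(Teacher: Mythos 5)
Your argument is correct and follows the same Montel-with-moving-targets route that the paper outsources to \cite[Theorem~5.1]{DeMarco: heights}, reconstructing the needed inputs (positivity of $\mu_+(C)$, and perfectness of $\Bif_+=\supp\mu_+$) from the paper's pole analysis in Lemma~\ref{order of pole growth} and the continuity statements of Theorem~\ref{continuity thm}. Two small remarks: the identity $\mu_+(C)=\sum_{t_0\in S_0^+}\gamma_{n_0}/3^{n_0}$ is not really Poincar\'e--Lelong but rather the fact that $\int_{\bar C}dd^c\tilde G^+=0$ for the extension of $G^+$ to the compact curve (with its logarithmic poles at $S_0^+$), combined with the observation — supplied by the uniform convergence in Theorem~\ref{continuity thm} — that no mass escapes to $S^+$ in the weak limit; and to extract \emph{infinitely many distinct} preperiodic parameters at the end you should either delete previously found parameters when shrinking the neighborhoods of $t_1$, or, more cleanly, run the Montel step in pairwise disjoint neighborhoods of infinitely many distinct points of the perfect set $\Bif_+$.
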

\proof This proposition follows from Montel's Theorem; see \cite[Theorem 5.1]{DeMarco: heights}.\qed

We thank Xiaoguang Wang for suggesting the proof of the following proposition. 
\begin{prop}\label{proportional escape-rate}
Let $C$ be an irreducible curve $C\subset \C^2$. Suppose  $\mu^+=r\mu^-$ on $C$ for some $r>0$, then we have $G^+=rG^-$ on $C$. 
\end{prop}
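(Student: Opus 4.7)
The plan is to show that $h:=G^+-rG^-$ vanishes identically on $C$ by demonstrating it is harmonic on $C$, bounded, constant on the projective completion, and finally zero.

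First, since $G^\pm$ are continuous on $C$ and $dd^c h = \mu^+ - r\mu^- = 0$ as a current on the affine part of $C$ by hypothesis, $h$ is harmonic on the open Riemann surface $C\setminus S$, where $S\subset C$ denotes the finite set of ``points at infinity'' in the smooth projective model (i.e.\ $C\setminus\C^2$).

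Next, I would read off the behaviour of $h$ at each $t_0\in S$. Lemma \ref{order of pole growth} implies that each $G^\pm$ admits a logarithmic asymptotic $G^\pm(t)=\gamma^\pm(t_0)\log|1/s(t)|+O(1)$ as $t\to t_0$, where $s$ is a local coordinate at $t_0$ and $\gamma^\pm(t_0):=\lim_n \gamma_n^\pm(t_0)/3^n\geq 0$ depends only on the pole orders of $a$ and $b$ at $t_0$ and the possible cancellations. Hence $h(t)=\alpha(t_0)\log|1/s(t)|+O(1)$ with $\alpha(t_0):=\gamma^+(t_0)-r\gamma^-(t_0)$. Applying Stokes' theorem to $dd^c G^\pm$ on the compact $C$ gives $\mu^\pm(C\cap\C^2)=\sum_{t_0\in S}\gamma^\pm(t_0)$, so the hypothesis $\mu^+ = r\mu^-$ forces the global constraint $\sum_{t_0\in S}\alpha(t_0)=0$.

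The hard part will be upgrading this global vanishing to pointwise vanishing, i.e.\ showing $\alpha(t_0)=0$ for \emph{every} $t_0\in S$, so that $h$ is bounded near each puncture and hence globally bounded on $C\setminus S$. The key observation is that the two cancellation conditions in Lemma \ref{order of pole growth}, $\lim_{t\to t_0}2a^3/b=1$ (special case for $+a$) and $\lim_{t\to t_0}2a^3/b=-1$ (special case for $-a$), are mutually exclusive, so at each puncture at most one of $\pm a$ is in the special case. In the generic case we have $\gamma^+(t_0)=\gamma^-(t_0)=\gamma_{\max}(t_0)/3$, while in a special case one of $\gamma^\pm(t_0)$ equals $\gamma_{\max}(t_0)/3$ and the other may be strictly smaller. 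My plan is to combine this local data with the global constraint $\sum_{t_0}\alpha(t_0)=0$, the non-negativity of $\gamma^\pm$, and the positivity of $G^\pm$ (which gives $-rG^-\leq h\leq G^+$) to rule out any mismatch at an individual puncture and conclude $\alpha(t_0)=0$ at each $t_0\in S$.

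Once $h$ is bounded, the classical removable singularity theorem for harmonic functions extends $h$ to a bounded harmonic function on the compact Riemann surface $C$, which must therefore be constant. To pin down the constant, Proposition \ref{infinite preperiodic points} supplies $t_1,t_2\in C\cap\C^2$ with $+a(t_1)$ preperiodic under $f_{a(t_1),b(t_1)}$ (so $G^+(t_1)=0$ and $h(t_1)=-rG^-(t_1)\leq 0$) and $-a(t_2)$ preperiodic (so $G^-(t_2)=0$ and $h(t_2)=G^+(t_2)\geq 0$). Since $h$ is constant, both inequalities hold everywhere, forcing $h\equiv 0$, i.e.\ $G^+=rG^-$ on $C$.
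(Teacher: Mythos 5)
Your proposal takes a genuinely different route from the paper's, and the outer architecture is sound (the local asymptotics from Lemma~\ref{order of pole growth}/Theorem~\ref{continuity thm}, the Stokes computation $\mu^\pm(C\cap\C^2)=\sum_{t_0\in S}\gamma^\pm(t_0)$, and the final use of Proposition~\ref{infinite preperiodic points} to evaluate the constant are all correct). However, there is a genuine gap at exactly the step you label ``the hard part.'' To get a bounded harmonic function on the compact model you need $\alpha(t_0)=\gamma^+(t_0)-r\gamma^-(t_0)=0$ at \emph{every} puncture, not just $\sum_{t_0}\alpha(t_0)=0$. The tools you cite do not force this. Near a puncture, the sandwich $-rG^-\le h\le G^+$ only gives $-r\gamma^-(t_0)\le\alpha(t_0)\le\gamma^+(t_0)$, which is automatic for any $\gamma^\pm\ge 0$; and mutual exclusivity of $\lim 2a^3/b=1$ and $\lim 2a^3/b=-1$ only tells you that at each $t_0$ at least one of $\gamma^\pm(t_0)$ equals $\gamma_{\max}(t_0)/3$. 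This leaves open, for instance with $r=1$, a configuration with two punctures where $\gamma^+(t_1)<\gamma^-(t_1)=\gamma_{\max}(t_1)/3$ and $\gamma^-(t_2)<\gamma^+(t_2)=\gamma_{\max}(t_2)/3$, with $\alpha(t_1)=-\alpha(t_2)\ne 0$; nothing in your list excludes it. Harmonic functions on $C\setminus S$ with prescribed logarithmic singularities summing to zero exist in abundance, so some extra dynamical input is needed to rule this out.

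The paper's proof sidesteps the entire puncture analysis. Since $\mu^+=r\mu^-$ forces $\Bif_+=\Bif_-=:\Bif$, and $G^\pm=0$ on $\Bif$, the harmonic function $h$ vanishes on $\Bif$. By McMullen's universality \cite{McMullen:Universal}, near any point of $\Bif$ there is a small topological copy $M$ of a generalized Mandelbrot set with $\partial M\subset\Bif$; the maximum principle then gives $h\equiv 0$ on the open interior of $M$, and real-analyticity of harmonic functions gives $h\equiv 0$ on all of $C\cap\C^2$. In effect, McMullen's theorem supplies precisely the additional dynamical input that your plan is missing. If you want to stay with your approach, you would need an independent argument that $\alpha(t_0)=0$ pointwise (the paper only obtains the divisor equality $\deg(D_{n_0}^-)D_{n_0}^+=\deg(D_{n_0}^+)D_{n_0}^-$ later, in the proof of Corollary~\ref{proportional escape cor}, and that deduction already invokes the present proposition, so you cannot borrow it without circularity).
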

\proof As $\Bif_\pm$ are the supports of the $\mu_\pm$, then $\Bif=\Bif_+=\Bif_-$. Since $\mu^+=r\mu^-$, $G^+-rG^-$ is harmonic on $C$ and  zero on $\Bif$. From \cite{McMullen:Universal}, in any small neighbourhood of  a point in $\Bif$, we can find a small (topological) generalized Mandelbrot set with boundary in $\Bif$,  then $G^+-rG^-$ is zero on the boundary of this small generalized Mandelbrot set.  Hence the harmonic function $G^+-rG^-$ is zero in the interior of this generalized Mandelbrot set. Consequently, $G^+-rG^-$ is zero everywhere on $C$ as it is harmonic. \qed

%%%%
%%%%%

\section{A line bundle with continuous metric}\label{continuous line bundle}In this section, we construct  metrized line bundles on an irreducible curve $C\subset \M_3^{cm}$, associated to a critical point which is not persistently preperiodic on $C$. One of the main goals of this section is to show that the metric on the line bundle is continuous, which is crucial  in the next section for proving the equidistribution of small points. 

\subsection{Continuity of the escape-rate function} We show continuity of the escape-rate function $G^+$ restricted on the smooth projective curve $C$. A {\em uniformizer} at $t_0\in C$ is a rational function $u\in \C(C)$ such that $\ord_{t_0} u=1$. 
\begin{theorem}\label{continuity thm}
For the escape-rate function $G^+$ restricted on a smooth projective curve $C\backslash S^+$, we have 
\begin{itemize}
\item $G^+$ can be extended to a continuous subharmonic function on $C\backslash (S^+\backslash S_0^+)$, and $G^+=0$ at any $t_0\in S^+\backslash S_0^+$. 
\item For any $t_0\in S_0^+$ and $u$ being a uniformizer at $t_0$, the function $G^++\frac{\gamma_n}{3^n} \log |u|$ can be extended to a harmonic function in a neighbourhood of $t_0$ for any sufficiently large $n$, where $\gamma_n$ is defined in (\ref{gamma n}). 
\end{itemize}
\end{theorem}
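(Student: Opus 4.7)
My plan is to analyze the local behavior of $g_n(t):=\frac{1}{3^n}\log^+|c_n(t)|$ separately on the three regions $C\setminus S^+$, $S_0^+$, and $S^+\setminus S_0^+$, relying throughout on the telescoping identity
\[
g_{n+1}(t)-g_n(t)=\frac{1}{3^{n+1}}\log\left|1-\frac{3a(t)^2}{c_n(t)^2}+\frac{b(t)}{c_n(t)^3}\right|
\]
valid wherever $|c_n(t)|$ is large. On $C\setminus S^+$ the parameters $(a(t),b(t))$ stay in $\C^2$, and the classical fact that the escape-rate is a locally uniform limit of $\{g_n\}$ on $\C^2$ gives continuity and subharmonicity of $G^+$ restricted to $C\setminus S^+$.

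For the case $t_0\in S_0^+$, the first bullet of Lemma \ref{order of pole growth} produces $n_0$ with $\gamma_n=3^{n-n_0}\gamma_{n_0}$ for every $n\geq n_0$, so $\lambda:=\gamma_n/3^n$ is a positive constant on this tail. Writing $c_n=u^{-\gamma_n}\phi_n$ with $\phi_n$ a local holomorphic function satisfying $\phi_n(t_0)\ne 0$, a direct computation at $t_0$ gives $\phi_{n+1}(t_0)=\phi_n(t_0)^3$ for $n\geq n_0$. The function $H_n(t):=\frac{1}{3^n}\log|\phi_n(t)|$ is then harmonic in a neighborhood of $t_0$, and $g_n(t)=H_n(t)-\lambda\log|u(t)|$. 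Substituting $c_n=u^{-\gamma_n}\phi_n$, $a=u^{-\gamma_a}\alpha$, $b=u^{-\gamma_b}\beta$ in the telescoping identity yields
\[
\frac{3a^2}{c_n^2}=3\bigl(\alpha/\phi_n\bigr)^2 u^{2(\gamma_n-\gamma_a)},\qquad\frac{b}{c_n^3}=\bigl(\beta/\phi_n^3\bigr) u^{3\gamma_n-\gamma_b}.
\]
For $n\geq n_0$ both exponents of $u$ are positive and grow geometrically like $3^n$; choosing a disk $V=\{|u|<\delta\}$ with $\delta$ small enough relative to the geometric sequence $|\phi_n(t_0)|=|\phi_{n_0}(t_0)|^{3^{n-n_0}}$ forces these corrections to decay doubly-exponentially in $n$ uniformly on $V$, giving $|H_{n+1}-H_n|\leq C\rho^{3^n}$ with some $\rho<1$. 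Uniform convergence of $H_n$ to a harmonic limit $H$ on $V$ then yields $G^+ +\lambda\log|u|=H$, which is exactly the second bullet of the theorem.

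For the case $t_0\in S^+\setminus S_0^+$, the second bullet of Lemma \ref{order of pole growth} forces $\gamma_b=3\gamma_a$ and $\gamma_n=\gamma_a$ for every $n\geq 1$. Writing $c_n=u^{-\gamma_a}\phi_n$, evaluation of the recursion at $t_0$ yields $(\phi_n(t_0)-A)^2(\phi_n(t_0)+2A)=0$ with $A:=\alpha(t_0)$ (using $\beta(t_0)=2A^3$), so $\phi_n(t_0)\in\{A,-2A\}$ for every $n$. Continuity of the extension $G^+(t_0):=0$ reduces to showing that $\frac{1}{3^n}\log|\phi_n(t)|\to 0$ uniformly on a neighborhood of $t_0$, since the remaining piece $-\frac{\gamma_a}{3^n}\log|u(t)|$ carries the coefficient $\gamma_a/3^n\to 0$. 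This uniform convergence to zero follows from equiboundedness of the family $\{\phi_n\}$ (above and below away from zero) on a fixed small disk around $t_0$, and I expect establishing this equiboundedness to be the principal obstacle. The recursion
\[
\phi_{n+1}=u^{-2\gamma_a}\bigl[(\phi_n-\alpha)^2(\phi_n+2\alpha)+(\beta-2\alpha^3)\bigr]
\]
contains the amplifying pole $u^{-2\gamma_a}$, but the hypothesis $\gamma_n=\gamma_a$ for every $n$ is precisely the statement that the bracket vanishes to order exactly $2\gamma_a$ at $t_0$ at every step. Combined with the fact that $\beta-2\alpha^3$ itself vanishes to order exactly $2\gamma_a$ with leading coefficient in $\{A,-2A\}$ (forced by $\gamma_1=\gamma_a$), a rigidity argument controlling the finitely many lowest Taylor coefficients of $\phi_n$ at $t_0$ via the recursion should propagate to yield the uniform bounds on a small disk, completing the proof of the first bullet.
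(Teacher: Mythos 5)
Your treatment of the first two regimes is essentially the paper's: on $C\setminus S^+$ you invoke the standard telescoping estimate (this is Lemma~\ref{convergence 1}), and at $t_0\in S_0^+$ the observation that $\gamma_n/3^n$ stabilizes together with the doubly-exponential decay of the correction term gives uniform convergence of $\frac{1}{3^n}\log|u^{\gamma_n}c_n|$ to a harmonic limit (this is Lemma~\ref{convergence 2}). Those portions are sound.

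The genuine gap is in the third regime, $t_0\in S^+\setminus S_0^+$, and it is precisely the step you flagged as ``the principal obstacle.'' You propose to prove that $\{\phi_n\}=\{u^{\gamma_a}c_n\}$ is equibounded above and below on a \emph{fixed} disk around $t_0$, and to derive from this that $\frac{1}{3^n}\log|\phi_n|\to 0$ uniformly. But this equiboundedness is false in general. For $t\ne t_0$ near $t_0$ one has $\frac{1}{3^n}\log|\phi_n(t)|\to G^+(t)$, so if $G^+(t)>0$ for some such $t$ then $|\phi_n(t)|\to\infty$. Nothing in the hypothesis $t_0\in S^+\setminus S_0^+$ forces $G^+$ to vanish identically near $t_0$; indeed $t_0$ can lie in the closure of the bifurcation locus $\Bif_+$ (this is exactly the ``non-compact bifurcation locus'' situation emphasized in the introduction), and then $G^+>0$ occurs arbitrarily close to $t_0$. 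The rigidity you invoke (Lemma~\ref{finitely many coefficients}) pins down only the Taylor coefficients of $\phi_n$ up to order $\gamma_a$; it says nothing about the higher-order coefficients, which do grow, and it therefore cannot ``propagate'' to a uniform bound on a fixed disk.

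What the paper actually does in Lemma~\ref{continuity at bad point} is quite different: it proves the locally uniform convergence of $\frac{1}{3^n}\log^+|\phi_n|$ directly, without ever claiming $\{\phi_n\}$ is equibounded on a fixed disk. Starting from an initial bound $|\phi_{n_0}|<M-10$ on a disk of radius $r_{n_0}$, it uses the finitely-many-coefficients lemma to subtract off the low-order part $\sum_{i\le\gamma_a}x_{n,i}t^i$ (which is uniformly bounded), applies the maximum principle to the remainder (which vanishes to order at least $3\gamma_a+1$), and is thereby able to pass to the next stage on a \emph{smaller} radius $r_{n_0+1}=r_{n_0}^{(3\gamma_a+1)/(\gamma_a+1)}$ while only cubing the bound $M\mapsto M^3$. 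Iterating, the key inequality
\[
\sup_{i\ge n_0+j}\;\sup_{|t|<r_{n_0+j}}\frac{\log^+|\phi_i(t)|}{3^i}\;\le\;\frac{\log M}{3^{n_0}}-\Bigl(\tfrac{3\gamma_a+1}{3\gamma_a+3}\Bigr)^{j}\frac{\log r_{n_0}^{2\gamma_a}}{3^{n_0}}
\]
shows the relevant quantity is small on a shrinking but controlled family of disks, which suffices for (\ref{local limit}) and hence for continuity of the extension by $0$ at $t_0$. The essential point, which your sketch misses, is that the bound on $\log|\phi_n|$ is allowed to grow like $3^n$ --- one only needs the normalized quantity $\frac{1}{3^n}\log^+|\phi_n|$ to be controlled, and that is achieved through the calibrated shrinking of the radius, not through equiboundedness of $\phi_n$ itself.
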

The proof of this theorem follows easily from Lemmas \ref{convergence 1}, \ref{convergence 2} and \ref{continuity at bad point}. First, we mention the following convention for our forthcoming analysis. Given $t\in C(\C)$, for the sake of simplifying our notation, we will often drop the dependence on $t$ and simply use $a,b,c_n$ instead of $a(t), b(t), c_n(t)$.

\begin{lemma}\label{convergence 1}
The sequence of functions $\frac{1}{3^n}\log^+|f^n_{a,b}(+a)|$ converges locally uniformly to $G^+(a,b)$ on $C\backslash S^+$ as $n$ tends to infinity.  
\end{lemma}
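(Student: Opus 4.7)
The plan is a standard telescoping argument applied to the iterates $c_n=f^n_{a,b}(a)$, with care taken only to verify that the estimates can be made uniform on compact subsets of $C\setminus S^+$. First I would fix a compact set $K\subset C\setminus S^+$. Since $S^+$ contains every pole of the rational functions $a$ and $b$ on the smooth projective curve $C$ (as $\gamma_{\max}=\max(3\gamma_a,\gamma_b)$ is positive exactly at such poles), both $a$ and $b$ restrict to bounded holomorphic functions on a neighbourhood of $K$; in particular there exists $M_0>0$ with $|a(t)|,|b(t)|\le M_0$ for all $t\in K$.

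Next I would choose an escape radius $M\ge 2$, depending only on $M_0$, so that for all $|z|\ge M$ and all $|\alpha|,|\beta|\le M_0$ one has
$$\tfrac{1}{2}|z|^3 \;\le\; |z^3-3\alpha^2 z+\beta|\;\le\; 2|z|^3.$$
This is possible since $|z^3-3\alpha^2 z+\beta|=|z|^3\bigl|1-3\alpha^2/z^2+\beta/z^3\bigr|$ and the parenthesised quantity differs from $1$ by at most $\tfrac12$ once $|z|$ is large enough in terms of $M_0$. Setting $\phi_n(t):=\frac{1}{3^n}\log^+|c_n(t)|$, a short case analysis then yields a uniform telescoping estimate $|\phi_{n+1}(t)-\phi_n(t)|\le C\cdot 3^{-n}$ on $K$. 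Indeed, if $|c_n(t)|\ge M$, then the lower bound forces $|c_{n+1}(t)|\ge |c_n(t)|^3/2\ge M$ as well, so both $\log^+$'s reduce to $\log$ and $\bigl|\log|c_{n+1}(t)|-3\log|c_n(t)|\bigr|\le\log 2$, giving a contribution of order $3^{-(n+1)}$; if instead $|c_n(t)|< M$, then $|c_{n+1}(t)|$ is bounded by a constant $M'$ depending only on $M$ and $M_0$, so both $\phi_n(t)$ and $\phi_{n+1}(t)$ are already bounded by $\log^+\max(M,M')\cdot 3^{-n}$.

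Summing this telescoping bound as a geometric series shows that $\phi_n$ is uniformly Cauchy on $K$, and hence converges uniformly to a continuous function whose pointwise value at each $t\in K$ is $G^+(a(t),b(t))$ by the definition \eqref{escape-rate definition}. Since $K\subset C\setminus S^+$ was an arbitrary compact set, this yields local uniform convergence on $C\setminus S^+$. I do not expect any serious obstacle here; the only subtle point is that $M_0$, and therefore $M$, must be chosen uniformly on $K$, which is exactly what excluding $S^+$ buys us, since near a point of $S^+$ the coefficients of $f_{a,b}$ blow up and no single escape radius works. The same argument in fact proves local uniform convergence on all of $\C^2$, as claimed after \eqref{escape-rate definition}, simply by replacing the local boundedness on $K$ by boundedness on an arbitrary compact subset of $\C^2$.
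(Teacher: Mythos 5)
Your proof is correct and follows essentially the same route as the paper: a telescoping Cauchy-sequence estimate on $\frac{1}{3^n}\log^+|c_n|$, split into the two cases $|c_n|\ge M$ (where the difference is controlled by $\log|1-3a^2/c_n^2+b/c_n^3|\le\log 2$) and $|c_n|<M$ (where both terms are directly bounded and absorbed by the geometric factor). You are merely more explicit than the paper about why an escape radius $M$ can be chosen uniformly on a compact subset of $C\setminus S^+$ — namely that $a$ and $b$ stay bounded there — but the underlying argument is identical.
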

\proof Let $M$ be a large positive real number, and assume that $3|a|^2, |b|<<M$. Recall that $c_n=f^n_{a,b}(+a)$ and $c_{n+1}=c_n^3-3a^2c_n+b$. Then we get that if $|c_n|<M$
  $$\left|\frac{\log^+|c_{n+1}|}{3^{n+1}}-\frac{\log^+|c_{n}|}{3^{n}}\right|<\frac{\log 3M^3}{3^{n+1}}$$
and if $|c_n|\geq M$
  $$\left|\frac{\log^+|c_{n+1}|}{3^{n+1}}-\frac{\log^+|c_{n}|}{3^{n}}\right| =\left|\frac{\log|1-3a^2/c_n^2+b/c_n^3|}{3^{n+1}}\right|<\frac{\log 2}{3^{n+1}}.$$
Hence $\frac{1}{3^n}\log^+|f^n_{a,b}(+a)|$ is a Cauchy sequence which converges locally uniformly. \qed

\begin{lemma}\label{convergence 2}
If $t_0\in S_0^+$ and if $u$ is a  uniformizer for $t_0$, then
  $$\frac{1}{3^n}\log^+|f^n_{a,b}(+a)|+\frac{\gamma_n}{3^n}\log|u| =\frac{1}{3^n}\log|u^{\gamma_n}f^n_{a,b}(+ a)|$$ converges locally uniformly  on $C$ at $t_0$ as $n$ tends to infinity.  
\end{lemma}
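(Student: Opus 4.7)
The strategy is to rescale $c_n$ so as to remove its pole at $t_0$, derive a holomorphic recursion whose perturbation decays doubly exponentially, and then show that $\phi_n := \frac{1}{3^n}\log|\tilde c_n|$ is Cauchy uniformly on a neighborhood of $t_0$.

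Concretely, shrink to a coordinate disk $U \ni t_0$ on which $u$ vanishes only at $t_0$ and $U \cap S^+ = \{t_0\}$; set $r := \sup_U|u|$. By Lemma~\ref{order of pole growth}, pick $n_0$ with $3\gamma_{n_0} > \gamma_{\max}$, so $\gamma_n = 3^{n-n_0}\gamma_{n_0}$ and $\gamma_{n+1} = 3\gamma_n$ for $n \geq n_0$. Define $\tilde c_n := u^{\gamma_n} c_n$; this is holomorphic on $U$ with $\tilde c_n(t_0) \neq 0$, and multiplying $c_{n+1} = c_n^3 - 3 a^2 c_n + b$ by $u^{3\gamma_n}$ yields
\[
\tilde c_{n+1} = \tilde c_n^3 - 3 A_n \tilde c_n + B_n, \qquad A_n := a^2 u^{2\gamma_n}, \ \ B_n := b u^{3\gamma_n},
\]
with $A_n, B_n$ holomorphic on $U$ and sup-norms $\|A_n\|_U \leq K_1 r^{2(\gamma_n - \gamma_a)}$, $\|B_n\|_U \leq K_2 r^{3\gamma_n - \gamma_b}$ decaying doubly exponentially in $n$.

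The Cauchy difference is
\[
\phi_{n+1} - \phi_n = \frac{1}{3^{n+1}}\log|1 + \varepsilon_n|, \qquad \varepsilon_n := -\frac{3 A_n}{\tilde c_n^2} + \frac{B_n}{\tilde c_n^3}.
\]
If $|\varepsilon_n| \leq \tfrac{1}{2}$ uniformly on $U$ with $\sum_n |\varepsilon_n|/3^{n+1} < \infty$, then the inequality $|\log|1+z|| \leq 2|z|$ for $|z|\leq \tfrac12$ gives $|\phi_{n+1} - \phi_n| \leq 2|\varepsilon_n|/3^{n+1}$, so $\phi_n$ is uniformly Cauchy on $U$ and converges uniformly. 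What remains is a suitable lower bound on $|\tilde c_n|$.

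The main obstacle is that $\tilde c_n(t_0) = \alpha^{3^{n-n_0}}$ with $\alpha := \tilde c_{n_0}(t_0) \neq 0$; when $|\alpha|<1$ this decays rapidly, so a constant lower bound on $|\tilde c_n|$ fails. I would resolve this by further shrinking $U$ so that $\tilde c_{n_0}$ is nonvanishing on $U$ and $r^{2\gamma_{n_0}} M/m^3 < 1$, where $m := \inf_U|\tilde c_{n_0}|$ and $M := \sup_U|\tilde c_{n_0}|$; this is possible because $r \to 0$ while $m, M \to |\alpha| > 0$ as $U$ shrinks. An induction on $n$ using $|\tilde c_{n+1}| \geq |\tilde c_n|^3 - 3\|A_n\|_U M_n - \|B_n\|_U$ together with the matching upper bound $M_{n+1} \leq M_n^3 + 3\|A_n\|_U M_n + \|B_n\|_U$ then yields $|\tilde c_n| \geq c_0\, m^{3^{n-n_0}}$ and $M_n \leq C_0\, M^{3^{n-n_0}}$ on $U$ for absolute constants $c_0, C_0 > 0$. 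Consequently
\[
\frac{\|A_n\|_U M_n}{|\tilde c_n|^3} \leq C\, \bigl(r^{2\gamma_{n_0}} M / m^3\bigr)^{3^{n-n_0}} \longrightarrow 0
\]
doubly exponentially (with a similar bound for $\|B_n\|_U/|\tilde c_n|^3$), so $|\varepsilon_n| \to 0$ super-exponentially and the Cauchy series converges absolutely and uniformly on $U$.
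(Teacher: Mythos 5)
Your overall strategy — rescale by the uniformizer, derive the recursion $\tilde c_{n+1}=\tilde c_n^3-3A_n\tilde c_n+B_n$, and then bound the perturbation $\varepsilon_n=-3A_n/\tilde c_n^2+B_n/\tilde c_n^3$ to get a uniformly convergent Cauchy series — is exactly the paper's route (the paper phrases the same estimate in terms of $c_n$ and the bound $|\log|1-3a^2/c_n^2+b/c_n^3||<\log 2$, but $A_n/\tilde c_n^2=a^2/c_n^2$ and $B_n/\tilde c_n^3=b/c_n^3$, so it is the same quantity). You are also right that the genuinely delicate point the paper glosses over is a uniform lower bound on $|\tilde c_n|$, and you correctly identify the mechanism: $\tilde c_n(t_0)=\tilde c_{n_0}(t_0)^{3^{n-n_0}}\to 0$ when $|\tilde c_{n_0}(t_0)|<1$.

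However, the specific bound you claim the induction yields is false, and the induction step as set up cannot produce it. Write $m_n:=\inf_U|\tilde c_n|$ and $N:=3^{n-n_0}$. The base case $m_{n_0}=m\geq c_0 m$ forces $c_0\leq 1$. In the inductive step you have $m_{n+1}\geq m_n^3-\delta_n\geq c_0^3 m^{3N}-\delta_n$, and to close the induction you need $(c_0^3-c_0)m^{3N}\geq \delta_n\geq 0$; but $c_0\leq 1$ gives $c_0^3-c_0\leq 0$, so this fails whenever $\delta_n>0$. Indeed no absolute $c_0>0$ can work: from $m_{n+1}\leq m_n^3$ the ratio $m_n/m^{3^{n-n_0}}$ satisfies $r_{n+1}\leq r_n^3$, so once it dips below $1$ it decays to $0$ super-exponentially. (The analogous statement holds for the claimed $M_n\leq C_0 M^{3^{n-n_0}}$, forcing $C_0\geq 1$ at the base but $C_0<1$ for the inductive step.) The correct form of the inequality has a degraded \emph{base}, not a constant prefactor: writing $\log m_{n+1}\geq 3\log m_n-2\delta_n/m_n^3$ and telescoping gives $m_n\geq (m')^{3^{n-n_0}}$ for some $m'<m$ with $m'\to m$ as $\sup_U|u|\to 0$, and similarly $M_n\leq (M')^{3^{n-n_0}}$ with $M'>M$. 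With this repair the argument goes through: after shrinking $U$ so that $r^{2\gamma_{n_0}}M'/(m')^3<1$ and $r^{3\gamma_{n_0}}/(m')^3<1$, one still gets $|\varepsilon_n|\leq C\rho^{3^{n-n_0}}$ for some $\rho<1$, which is all the Cauchy estimate needs. So the idea is sound and matches the paper, but the displayed inequalities $|\tilde c_n|\geq c_0 m^{3^{n-n_0}}$, $M_n\leq C_0 M^{3^{n-n_0}}$, and the ensuing $\frac{\|A_n\|_U M_n}{|\tilde c_n|^3}\leq C(r^{2\gamma_{n_0}}M/m^3)^{3^{n-n_0}}$ are not literally correct and need to be restated with a perturbed base.
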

\proof As $\gamma_n$ is the order of pole of $c_n$ at $t_0$, $u^{\gamma_n}c_n$ is holomorphic and non-vanishing near $t_0\in C$, i.e., $\frac{1}{3^n}\log|u^{\gamma_n}f^n_{a,b}(+ a)|$ is harmonic near $t_0$ on $C$.  From Lemma \ref{order of pole growth}, we can pick a large $n_0$, such that $\gamma_{n+1}=3\gamma_n>>\gamma_{\max}$ for any $n\geq n_0$. As $c_{n+1}=c_n^3-3a^2+b$, inductively, $|c_n(t)|>>1$ grows exponentially fast as $n\to \infty$ for $t\in C$ very close to $t_0$. So for $t$ near $t_0$ and $n\geq n_0$, we have
$$\frac{1}{3^n}\log^+|c_n|+\frac{\gamma_n}{3^n}\log|u| =\frac{1}{3^n}\log|u^{\gamma_n}c_n|$$
 and $|3a^2(t)/c_n^2(t)|<<1, |b(t)/c_n^3(t)|<<1$, hence
   $$\left|\frac{\log|u^{\gamma_{n+1}}c_{n+1}|}{3^{n+1}}-\frac{\log|u^{\gamma_n}c_{n}|}{3^{n}}\right| =\left|\frac{\log|1-3a^2/c_n^2+b/c_n^3|}{3^{n+1}}\right|<
   \frac{\log 2}{3^n},$$
as desired.
\qed 
   
It is more delicate to show the continuity of the escape-rate function $G^{\pm}$ on $C$ crossing $t_0\in S^+\backslash S_0^+\subset C$. We first prove a result which will be used later in Lemma \ref{continuity at bad point}. 

\begin{lemma}\label{finitely many coefficients}
Let $\gamma$ be a positive integer, and  let $\tilde b(t)=2+\beta_1t+\beta_2 t^2+\beta_3 t^3+\cdots$ be a holomorphic germ at $t=0$. Define
 $$g(z):=z^3-3z+\tilde b.$$
Suppose $z_n(t)$ is a sequence of holomorphic germs at $t=0$, satisfying 
 \begin{itemize}
 \item $z_{n+1}(t)=g(z_n(t))/t^{2\gamma}$ for $n\geq 1$. 
 \item $\ord_{t=0} z_{n}(t)=0$ for all $n\geq 1$. 
 \end{itemize}
We write $z_n(t)=\sum^{\infty}_{i=0} x_{n,i} t^i$ for each $n\ge 1$. 
 Then there is a finite set $T$ such that $x_{n,i} \in T$ for all $n\geq 1$ and all $0\leq i\leq \gamma$; moreover, $x_{n,0}$ is a solution of the equation $x^3-3x+2=0$.
 \end{lemma}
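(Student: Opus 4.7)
Setting $t = 0$ in the identity $z_n(t)^3 - 3z_n(t) + \tilde b(t) = t^{2\gamma} z_{n+1}(t)$ gives $x_{n,0}^3 - 3x_{n,0} + 2 = 0$, which factors as $(x_{n,0}-1)^2(x_{n,0}+2)=0$; hence $x_{n,0} \in \{1,-2\}$ for every $n$, proving the second assertion. For the finiteness claim I expand the $t$-adic identity $g(z_n(t)) \equiv 0 \pmod{t^{2\gamma}}$ coefficient by coefficient, writing
\[
[t^k]\, g(z_n) \;=\; (3x_{n,0}^2 - 3)\,x_{n,k} \;+\; Q_k(x_{n,0},\ldots,x_{n,k-1}) \;+\; \beta_k \qquad (k \ge 1),
\]
where $Q_k$ is the polynomial contribution from $z_n^3$ and $\beta_0 = 2$ by convention. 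Vanishing for $k = 0,1,\ldots,2\gamma-1$ gives $2\gamma$ polynomial relations on $x_{n,0},\ldots,x_{n,2\gamma-1}$.

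\emph{Case $x_{n,0} = -2$.} The prefactor $3x_{n,0}^2 - 3 = 9$ is non-zero, so each equation solves uniquely for $x_{n,k}$ in terms of the previous coefficients and $\beta_k$. An induction determines $x_{n,1},\ldots,x_{n,2\gamma-1}$ as explicit rational expressions in the $\beta_j$'s, giving just one admissible value per coefficient.

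\emph{Case $x_{n,0} = 1$.} Now the prefactor vanishes and the $k$th equation no longer involves $x_{n,k}$ but instead constrains the earlier coefficients. The $k=1$ equation forces $\beta_1 = 0$ (a necessary condition on $\tilde b$ for this branch to occur); the $k=2$ equation reads $3x_{n,1}^2 + \beta_2 = 0$, giving at most two values for $x_{n,1}$; and for $3 \le k \le 2\gamma-1$ the coefficient of $x_{n,k-1}$ in the equation equals $6x_{n,0}x_{n,1} = 6x_{n,1}$, so provided $x_{n,1}\neq 0$ one solves recursively and pins each of $x_{n,2},\ldots,x_{n,2\gamma-2}$ to at most two values. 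The degenerate sub-case $x_{n,1}=0$ forces $\beta_2 = 0$, and the same analysis applied to the first non-vanishing coefficient of $z_n-1$ still yields only finitely many possibilities.

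When $\gamma \ge 2$ this already controls $x_{n,0},\ldots,x_{n,\gamma}$ since $\gamma \le 2\gamma-2$. For the boundary case $\gamma=1$, the vanishing conditions alone fix only $x_{n,0}$, so I import the constraint $x_{n+1,0} = [t^{2\gamma}]\,g(z_n) \in \{1,-2\}$ coming from the \emph{next} iterate; in Case A this becomes $3x_{n,1}^2 + \beta_2 \in \{1,-2\}$, restricting $x_{n,1}$ to at most four values. Taking the union of the finite lists of admissible values for each $i \le \gamma$ across both branches produces the required set $T$. The \textbf{main obstacle} is Case A, where the natural Jacobian of the vanishing conditions degenerates because $z=1$ is a double root of $g(z,0) = z^3-3z+2$; one must exploit the quadratic structure $g(1+w,0) = 3w^2 + w^3$, handle the degenerate sub-case $x_{n,1}=0$ carefully, and import the constraint from the next iterate whenever the basic recursion is underdetermined.
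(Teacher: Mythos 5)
Your proposal is correct and takes essentially the same route as the paper's proof: extract $x_{n,0}\in\{1,-2\}$ from the $t^0$ coefficient, run a linear recursion off the prefactor $3x_{n,0}^2-3$ in the nondegenerate branch $x_{n,0}=-2$, and in the degenerate branch $x_{n,0}=1$ look at the first non‑vanishing coefficient $x_{n,\ell}$ of $z_n-1$ to harvest a quadratic equation $3x_{n,\ell}^2+\beta_{2\ell}=0$ (or $\in\{1,-2\}$ at the boundary index $2\gamma$, by importing $x_{n+1,0}$) and then a linear recursion with leading coefficient $6x_{n,\ell}$. The paper packages exactly this as a clean induction on $j\le\gamma$ and distinguishes the same two sub‑cases; your remark ``the same analysis applied to the first non-vanishing coefficient of $z_n-1$'' is a slightly compressed version of that induction, but the idea and the boundary treatment via the next iterate match the paper precisely.
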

 \proof Let 
    $$g(z_n(t))=z_n^3-3z_n+\tilde b=:\sum_{i=0}^{\infty}\alpha_{n,i} t^i.$$
Since $g(z_n)=z_{n+1}\cdot t^{2\gamma}$ and $\ord_{t=0} z_{n+1}(t)=0$, we have 
  $$\alpha_i=0, ~\textup{ for }0\leq i \leq 2\gamma\text{ and }\alpha_{n,2\gamma}\ne 0.$$
 Notice that since $\alpha_0=x_{n,0}^3-3x_{n,0}+2=0$, then 
$x_{n,0}\in \{1,-2\}$. Now suppose $x_{n,i}$ is unique up to finitely many choices for all $0\leq i\leq j-1<\gamma$, which is true when $j=1$; we prove next that also $x_{n,j}$ is unique up to finitely many choices. 

Suppose $x_{n,0}\neq 1$, i.e., $x_{n,0}=-2$. An easy computation shows that 
   $$\alpha_{n,j}=x_{n,j}(3x_{n,0}^2-3)+\beta_j+F_j(x_{n,0},\dots,x_{n,j-1})=0$$
where $F_j$ is a unique polynomial in $j$ variables obtained from the expansion of $z_n(t)^3$. Because $x_{n,0}= -2$, then $x_{n,j}$ is uniquely determined by $x_{n,0}, \dots, x_{n,j-1}$ and $\beta_j$. 

Now suppose $x_{n,0}=1$. Now, if $x_{n,i}=0$ for $1\leq i\leq j-1$, we have  that 
   $$\alpha_{2j}=3x_{n,j}^2+\beta_{2j}.$$
If $j<\gamma$, then $\alpha_{2j}=3x_{n,j}^2+\beta_{2j}=0$. Otherwise if $j=\gamma$, $\alpha_{2j}=3x_{n,j}^2+\beta_{2j}$ is the constant term of $z_{n+1}(t)$ which must satisfy the equation $x^3-3x+2=0$. In any case, $x_{n,j}$ is unique up to finitely many choices. 

Now, if $x_{n,i}\neq 0$ for some $1\leq i\leq j-1$ (also under the assumption that $x_{n,0}=1$), we let $\ell\ge 1$ be the smallest such integer with $x_{n,\ell}\neq 0$. Then 
   $$\alpha_{\ell+j}=6x_{n,\ell} x_{n,j}+G_{j,\ell}(x_{n,\ell},\dots, x_{n,j-1})+\beta_{\ell+j}=0$$
 where $G_{j,\ell}$ is a unique polynomial in $j-\ell$ variables obtained from the expansion of $z_n(t)^3$ (also taking into account that $x_{n,0}=1$ and that $x_{n,i}=0$ for $1\le i\le \ell-1$). Hence $x_{n,j}$ is also uniquely determined up to finitely many choices. By induction, for $0\leq i\leq \gamma$, we get that $x_{n,i}$ is uniquely determined by $\tilde b$ up to finitely many choices. \qed

\begin{lemma}\label{continuity at bad point}
Fix a point $t_0\in S^+\backslash S_0^+$ and a uniformizer $u$ for $t_0$ on $C$. The sequence of functions $\frac{1}{3^n}\log^+|u^{\gamma_n}f^n_{a,b}(+ a)|$ converges locally uniformly  on $C$ at $t_0$ as $n$ tends to infinity.  
\end{lemma}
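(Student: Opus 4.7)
The plan is to combine the bound on low-order Taylor coefficients from Lemma \ref{finitely many coefficients} with a dynamical analysis that controls the iterates $w_n := c_n/a$ on a small disk around $t_0$, and then upgrade pointwise convergence to uniform convergence.

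\textbf{Step 1 (Normalization and invocation of Lemma \ref{finitely many coefficients}).} Since $t_0 \in S^+\setminus S_0^+$, Lemma \ref{order of pole growth} forces $\gamma_n = \gamma_a =: \gamma$ for all $n\ge 1$, $\gamma_b = 3\gamma$, and $\lim_{t\to t_0}(2a^3/b) = 1$. Set $\alpha := u^\gamma a$ and $\beta := u^{3\gamma} b$; these are holomorphic and nonvanishing at $t_0$ with $\beta(t_0) = 2\alpha(t_0)^3$. Introduce $w_n := c_n/a$ (holomorphic and nonvanishing at $t_0$). After replacing $u$ by a uniformizer $v$ with $v^{2\gamma} = u^{2\gamma}/\alpha^2$ (possible because $\alpha(t_0)\ne 0$), the recurrence $c_{n+1}=f_{a,b}(c_n)$ becomes $w_{n+1} = v^{-2\gamma}g(w_n)$ with $g(w)=w^3-3w+\tilde b$ and $\tilde b := b/a^3$ holomorphic at $t_0$ with $\tilde b(t_0)=2$. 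The constraint $\gamma_1 = \gamma$ forces $\ord_{t_0}(\tilde b-2)=2\gamma$. This is exactly the hypothesis of Lemma \ref{finitely many coefficients}, which yields a finite set $T \subset \C$ containing every Taylor coefficient $x_{n,i}$ of $w_n$ at $t_0$ for $0 \le i \le \gamma$; in particular $w_n(t_0) \in \{1,-2\}$ for all $n$.

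\textbf{Step 2 (Quantitative orbit tracking).} On a small disk $D$ around $t_0$, the sequence $w_n(t)$ stays in a bounded neighborhood of $\{1,-2\}$ for a long time before possibly escaping. Writing $w_n = P_n + v^{\gamma+1}R_n$, where $P_n$ ranges over a finite family of polynomials (from Step 1), I would expand the recurrence as
\[
 w_{n+1} \;=\; v^{-2\gamma}\bigl[g(P_n) + g'(P_n)v^{\gamma+1}R_n + 3P_n v^{2\gamma+2}R_n^2 + v^{3\gamma+3}R_n^3\bigr]
\]
and track how $R_n$ evolves. When $P_n(t_0)=-2$ the linearization $g'(-2)=9\ne 0$ is hyperbolic and $R_n$ is controlled by a standard contraction argument on $D$. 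When $P_n(t_0)=1$ the linearization vanishes, but Lemma \ref{finitely many coefficients} provides $P_n = 1 + O(v^{\gamma_0})$ for some $\gamma_0\ge 1$ (and further structural constraints on the $x_{n,i}$ coming from the recurrence), which supplies the cancellation needed to track the near-parabolic dynamics. The outcome I aim for is a quantitative lower bound on the escape time
\[
 N(t) := \inf\bigl\{\, n\ge 0 \;:\; |w_n(t)| > M \,\bigr\},
\]
valid for $t\in D$, with $N(t) \to \infty$ as $t \to t_0$.

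\textbf{Step 3 (From orbit tracking to uniform convergence).} Once $N(t)\to\infty$ on $D$, for $n < N(t)$ one has $|w_n(t)|\le M$ and thus $\log^+|\alpha(t)w_n(t)|$ is bounded by a constant. For $n \ge N(t)$, the recurrence in the escape regime gives the telescoping bound
\[
 \tfrac{\log|w_n|}{3^n} \;=\; \tfrac{\log|w_{N(t)}|}{3^{N(t)}} \;+\; O\!\Bigl(\tfrac{-\log|u(t)|}{3^{N(t)}}\Bigr),
\]
which tends to $0$ as $t\to t_0$ because $N(t)\to\infty$. In both regimes, $\frac{1}{3^n}\log^+|h_n(t)| = \frac{1}{3^n}\log^+|\alpha(t)w_n(t)|$ tends to $0$ uniformly on $D$, which (together with Lemma \ref{convergence 1} on $C\setminus S^+$) establishes the claimed locally uniform convergence.

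The main obstacle is Step 2, specifically the double-root case $P_n(t_0)=1$. The naive linearization is degenerate, so the escape-time bound $N(t)\to\infty$ cannot be read off from a single application of the implicit function theorem; instead one must propagate the higher-order Taylor data from Lemma \ref{finitely many coefficients} through the iteration, which is the combinatorial heart of the argument. Getting any polynomial-in-$1/|v(t)|$ lower bound on $N(t)$ is enough to conclude, since one only needs $-\log|u(t)|/3^{N(t)}\to 0$.
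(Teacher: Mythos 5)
Your Step 1 (the reparameterization via $w_n = c_n/a$ and the invocation of Lemma~\ref{finitely many coefficients}) and Step 3 (telescoping in the escape regime) are both sound, but the entire burden falls on Step 2, and that is precisely where the proposal stops short. Establishing $N(t)\to\infty$ independently is essentially as hard as the lemma itself: the escape time is controlled by $G^+(t)$ (roughly $N(t)\gtrsim\log_3(1/G^+(t))$), so a lower bound on $N(t)$ near $t_0$ presupposes the continuity of $G^+$ at $t_0$ that you are trying to prove. You correctly flag the degenerate case $x_{n,0}=1$ as the ``combinatorial heart,'' but the assertion that Lemma~\ref{finitely many coefficients} ``supplies the cancellation needed'' is a hope, not a mechanism. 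Note also that the constant term of $w_{n+1}$ at $t_0$ is the $t^{2\gamma}$-coefficient of $g(w_n)$, not $g(w_n(t_0))$, so the finite set $T$ does not by itself set up a near-parabolic one-dimensional iteration on which one could run Fatou-coordinate or contraction estimates; the dynamics at $t_0$ is only formally defined through the Taylor tails.

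The paper's proof avoids orbit-tracking entirely, and I would redirect your Step 2 along those lines. Setting $\tilde c_n := t^{\gamma_a} c_n$, one subtracts from $g(\tilde c_n)$ the controlled head $\sum_{i=0}^{\gamma_a} x_{n+1,i}t^{2\gamma_a + i}$ (finitely many possibilities by Lemma~\ref{finitely many coefficients}); the remaining tail is exactly $\sum_{i>\gamma_a}x_{n+1,i}t^{2\gamma_a+i}$, which vanishes to order at least $3\gamma_a+1$ at $t_0$. A Schwarz-lemma estimate then shows that shrinking the disk from radius $r$ to $r^{(3\gamma_a+1)/(\gamma_a+1)}$ restores a bound of the same shape on $|\tilde c_{n+1}|$ with $M$ replaced by $M^3$. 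Because $(3\gamma_a+1)/(\gamma_a+1)<3$, iterating this dilation and dividing by $3^n$ yields
$$\sup_{i\geq n_0+j}\,\sup_{|t|<r_{n_0+j}}\frac{\log^+|\tilde c_i(t)|}{3^i}\leq \frac{\log M}{3^{n_0}}-\left(\frac{3\gamma_a+1}{3\gamma_a+3}\right)^j\frac{\log r_{n_0}^{2\gamma_a}}{3^{n_0}},$$
which tends to $0$ as $n_0,j\to\infty$, with no case distinction between $x_{n,0}=1$ and $x_{n,0}=-2$. Keep your Step 1; replace Steps 2--3 with this shrinking-disk argument.
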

\proof From Lemma \ref{pole set}, $\gamma_n=\gamma_a>0$ for all $n$, then the convergence statement is independent on the choice of uniformizer $u$. For a suitable choice of the uniformizer and also for a suitable analytic parameterization of a neighborhood of $t_0\in C$, we can assume that  
   $$t_0=0\text{, }u(t)=t\text{, }a(t)=t^{-\gamma_a}\text{, }b(t)=\beta_0t^{-3\gamma_a}+\beta_1t^{-3\gamma_a+1}+\beta_2t^{-3\gamma_a+2}+\cdots$$
and then $c_n(t)=f^n_{a(t), b(t)}(a(t))$. From Lemma \ref{pole set}, $\ord_{t=0}c_n(t)=-\gamma_a$ for all $n$ and 
  $$\lim_{t\to 0} 2a^3(t)/b(t)=1=2/\beta_0\text{ and thus $\beta_0=2$.}$$
The statement of this Lemma is equivalent to the statement that the sequence 
  $$\frac{1}{3^n}\log^+|t^{\gamma_a}c_n(t)|$$
converges locally uniformly in a neighbourhood of $t=0$ as $n\to \infty$. Also, by Lemma \ref{convergence 1}, $\frac{1}{3^n}\log^+|t^{\gamma_a}c_n(t)|$ converges locally uniformly for $t\neq 0$. As $ \log^+|t^{\gamma_a}c_i(t)|$ is always non-negative, to prove this lemma, it suffices to show that there exists a sequence of positive real numbers $r_n$ shrinking to zero as $n\to \infty$, such that
\begin{equation}\label{local limit}
\lim_{n\to\infty} \sup_{i\geq n} \sup_{|t|<r_n} \frac{ \log^+|t^{\gamma_a}c_i(t)|}{3^i}=0.
\end{equation}
   
Since $\ord_{t=0}c_n(t)=-\gamma_a$, the Taylor series of $c_n(t)$ can be written as
$$c_n(t)=x_{n,0}t^{-\gamma_a}+x_{n,1}t^{-\gamma_a+1}+x_{n,2}t^{-\gamma_a+2}\cdots.$$
For simplicity, we let 
  $$\tilde{c}_n(t):=t^{\gamma_a}c_n(t)=x_{n,0}+x_{n,1}t+x_{n,2}t^2+\cdots.$$
As  $c_{n+1}=c_n^3-3a^2c_n+b$,  we have that $\tilde c_{n+1}$ and $\tilde c_n$ satisfy
  $$\tilde c_{n+1}(t)=g(\tilde c_n)/t^{2\gamma_a}$$
for  $g(z):=z^3-3 z+\tilde b$ with $$\tilde b(t):=t^{3\gamma_a}b(t)=2+\beta_1t+\beta_2t^2+\cdots.$$
Let $r_0$ be a positive real number less than $1$ such that if $|t|\le r_0$ we have
\begin{equation}
\label{r_0 and tilde b}
|\tilde{b}(t)|<3.
\end{equation}

From Lemma~\ref{finitely many coefficients}, we know that there exists a finite set $S$ such that $x_{n,i}\in S$ for each $n\geq 1$ and each  $0\leq i\leq \gamma_a$; moreover, one has $|x_{n,0}|<3$. So, at the expense of replacing $r_0$ by a smaller positive real number, we may also assume that if $|t|\le r_0$, then
\begin{equation}
\label{sum x_i}
\left|\sum_{i=0}^{\gamma_a} x_{n,i}t^{i}\right| < 4.
\end{equation}

Let $M_0$ be a positive real number with the property that if $A>M_0$, then 
\begin{equation}
\label{simple inequality}
(A-10)^3 + 3(A-10) < A^3 - 30.
\end{equation}

Let $M\ge M_0$ be a real number, let $n_0$ be a large positive integer, and let $r_{n_0}< r_0$ be a positive real number such that for all $t$ with $|t|\leq r_{n_0}$, we have
 \begin{equation}\label{inductive 111}
 |\tilde c_{n_0}(t)|<M-10.
 \end{equation}
Inequalities \eqref{simple inequality} and \eqref{r_0 and tilde b} yield that if $|t|\le r_{n_0}$ then   
 \begin{equation}\label{inductive 112}
|g(\tilde c_{n_0}(t))|=|\tilde c^3_{n_0}-3\tilde c_{n_0}+\tilde b| < (M-10)^3+3(M-10)+ 3< M^3-27<M^3-10.
 \end{equation}  
Hence for $|t|\leq r_{n_0}$, by the Maximal Value Theorem, we have that $$|\tilde c_{n_0+1}(t)|=\left|\frac{g(\tilde{c}_{n_0}(t))}{t^{2\gamma_a}}\right|\leq \frac{M^3-10}{r_{n_0}^{2\gamma_a}},$$ and then (again using \eqref{simple inequality} and that $r_{n_0}<r_0<1$) we get
 \begin{equation}\label{inductive 113}
|\tilde c_{n_0+1}(t)|<\frac{M^3}{r_{n_0}^{2\gamma_a}}-10\text{ and so, } |g(\tilde c_{n_0+1})|<\left(\frac{M^3}{r_{n_0}^{2\gamma_a}}\right)^3-10.
\end{equation}
From (\ref{inductive 111}, \ref{inductive 112}, \ref{inductive 113}), inductively, we know that for $|t|\leq r_{n_0}$ and all $i\geq 0$, one has
 \begin{equation}\label{inductive 211}
 |\tilde c_{n_0+i}(t)|<\frac{M^{3^i}}{r_{n_0}^{2\gamma_a(3^{i-1}+3^{i-2}+\cdots+3^0)}}-10<\frac{M^{3^i}}{r_{n_0}^{2\gamma_a(3^{i-1}+3^{i-2}+\cdots+3^0)}}.
 \end{equation}
 So we have 
  $$\sup_{i\geq n_0}\sup_{|t|\leq r_{n_0}}\frac{\log^+|\tilde c_i(t)|}{3^i}\leq \frac{\log M}{3^{n_0}}-\frac{\log r_{n_0}^{2\gamma_a}}{3^{n_0}}.$$
  
Next, we are going to shrink $r_{n_0}$ properly to $r_{n_0+i}$ such that $r_{n_0+i}$ will not tend to zero too fast as $i\to \infty$.   Let $M_1=M^3$. Since $|\tilde{c}_{n_0}(t)|<M-10$ if $|t|\le r_{n_0}$, inequalities \eqref{simple inequality}, \eqref{r_0 and tilde b} and \eqref{sum x_i} yield that
  $$\left|g(\tilde c_{n_0}(t))-\sum_{i=0}^{\gamma_a}x_{n+1,i}t^{2\gamma_a+i}\right|<M^3-30+3+4 <M_1-20.$$
Notice that the order of $g(\tilde c_{n_0})-\sum_{i=0}^{\gamma_a}x_{n+1,i}t^{2\gamma_a+i}$ at $t=0$  is at least $ 3\gamma_a+1$; so, again by the Maximal Value Theorem, for all $|t|\leq r_{n_0}$ we have that 
  $$\frac{\left|g(\tilde c_{n_0}(t))-\sum_{i=0}^{\gamma_a}x_{n+1,i}t^{2\gamma_a+i}\right|}{|t|^{3\gamma_a+1}}\leq \frac{M_1-20}{r_{n_0}^{3\gamma_a+1}},$$
i.e., 
  $$\left|\tilde c_{n_0+1}-\sum_{i=0}^{\gamma_a}x_{n+1,i}t^{i}\right|\leq \frac{M_1-20}{r_{n_0}^{3\gamma_a+1}}\cdot |t|^{\gamma_a+1}.$$
Let $r_{n_0+1}:=r_{n_0}^{(3\gamma_a+1)/(\gamma_a+1)}$. We have that for all $t$ with $|t|\leq r_{n_0+1}$,
   $$\left|\tilde c_{n_0+1}(t)-\sum_{i=0}^{\gamma_a}x_{n+1,i}t^{i}\right|<M_1-20.$$
Since  $r_{n_0+1}<r_{n_0}<r_0$, inequality \eqref{sum x_i} yields that if $|t|\leq  r_{n_0+1}$, then
    $$|\tilde c_{n_0+1}|<M_1-10.$$
Replacing $n_0$ and $M$ in (\ref{inductive 211}) by $n_{0}+1$ and $M_1$, we get that for all $|t|<r_{n_0+1}$ and all $i\geq 0$
    $$|\tilde c_{n_0+1+i}|< \frac{M_1^{3^i}}{r_{n_0+1}^{2\gamma_a(3^{i-1}+3^{i-2}+\cdots+3^0)}}-10 <\frac{M_1^{3^i}}{r_{n_0+1}^{2\gamma_a(3^{i-1}+3^{i-2}+\cdots+3^0)}}.$$
Consequently, 
   $$\sup_{i\geq n_0+1}\sup_{|t|<r_{n_0+1}}\frac{\log^+|\tilde c_i(t)|}{3^i}\leq \frac{\log M_1}{3^{n_0+1}}-\frac{\log r_{n_0+1}^{2\gamma_a}}{3^{n_0+1}}.$$
Because $r_{n_0+1}=r_{n_0}^{(3\gamma_a+1)/(\gamma_a+1)}$ and $M_1=M^3$, the above inequality becomes
    $$\sup_{i\geq n_0+1}\sup_{|t|<r_{n_0+1}}\frac{\log^+|\tilde c_i(t)|}{3^i}\leq \frac{\log M}{3^{n_0}}-\left(\frac{3\gamma_a+1}{3\gamma_a+3}\right)\frac{\log r_{n_0}^{2\gamma_a}}{3^{n_0}}.$$
Inductively, for all $j\geq 0$ and $r_{n_0+j}:=r_{n_0}^{(3\gamma_a+1)^j/(\gamma_a+1)^j}$
    $$\sup_{i\geq n_0+j}\sup_{|t|<r_{n_0+j}}\frac{\log^+|\tilde c_i(t)|}{3^i}\leq \frac{\log M}{3^{n_0}}-\left(\frac{3\gamma_a+1}{3\gamma_a+3}\right)^j\frac{\log r_{n_0}^{2\gamma_a}}{3^{n_0}}.$$
Since $M$ is fixed and we can pick arbitrarily large $n_0$ and $j$,  the above inequality asserts (\ref{local limit}). This concludes the proof of Lemma~\ref{continuity at bad point}. \qed

%%%%
%%%%%
\subsection{Metrized line bundle}\label{metric line bundle} Let $L$ be a line bundle of a projective curve $C$. A {\em metric} $\|\cdot \|$ on $L$ is a collections of norms on $L(t)$, one for each $t\in C$, such that for a section $s$
   $$\|\alpha \cdot s(t)\|:=|\alpha|\|s(t)\|$$
for all constants $\alpha$. The metric $\|\cdot \|$ is said to be {\em continuous}, if $\|s(t)\|$ varies continuously as we vary $t$ locally. All metrics considered in this article are continuous. A continuous metric $\|\cdot \|$ on $L$ is said to be {\em semi-positive} if, locally $\log\|s(t)\|^{-1}$ is a subharmonic function for any non vanishing holomorphic section $s$, and the {\em curvature} of this metric is given by 
\begin{equation}\label{curvature}
c_1(L)=dd^c\log \|s(t)\|^{-1},
\end{equation}
which can be viewed as a measure on $C$. It is clear the curvature does not depend on the choice of the section $s$.

   We work on the curve where the marked critical point is not persistently prepeiodic. Then by Lemma \ref{non empty poles}, $S_0^+$ (or $S_0^-$) is non empty. For each $n\geq 1$, let $D_n^+$ be the Weil divisor in the smooth projective curve $C$ defined as 
   $$D_n^+:=\sum_{t_0\in S_0^+} \gamma_n\cdot t_0$$
where $\gamma_n$ is the order of pole of $c_n$ at $t_0$ (see \eqref{gamma n}). Similarly, we can define the divisor $D_n^-$ corresponding to the marked critical point $-a$ on $C$. By Lemma \ref{order of pole growth}, for any sufficiently large $n$ and all $i\geq 1$,  we have 
\begin{equation}\label{divisor growth}
  D^{\pm}_{n+i}=3^i\cdot D_n^\pm.
\end{equation}

Let $L_{D_n^\pm}$ be the line bundles of $C$ associated to $D_n^\pm$. From Theorem \ref{continuity thm}, when $n$ is big enough, there is a canonical metric $\|\cdot \|$ on $L_{D_n^\pm}$, which is continuous and semi-positive. Indeed, via the canonical map 
 \begin{equation}\label{canonical morph of line}
 \phi:  L_{D_n^\pm}|_{C\backslash S^{\pm}}\simeq (C\backslash S^{\pm})\times \C,
 \end{equation}
for any section $s(t)$ of $L_{D_n^\pm}$ on ${C\backslash S^{\pm}}$, the metric $\|\cdot \|$ on $s$ is given by 
\begin{equation}\label{metric definition}
\|s(t)\|:=e^{-3^n\cdot G^\pm}\cdot |\phi(s(t))|.
\end{equation}
Theorem \ref{continuity thm} asserts that when $n$ is sufficiently large, this metric can be extended to a continuous and semi-positive metric on $L_{D_n^\pm}$. If we consider the metric on $L^{\otimes i}_{D_n^\pm}$ inherited from the metric on $L_{D_n^\pm}$, then for large $n$ (using \eqref{divisor growth} and \eqref{metric definition}), we have that  
   $$L^{\otimes 3^i}_{D_n^\pm}\simeq L_{D_{n+i}^\pm}$$
is an isometry.

%%%%%%
%%%%%%%%%%%%
\section{Equidistribution of small points}
In this section, we show that the parameters, at which the marked critical point is preperiodic are equidistributed on $C$ with respect to the bifurcation measure of the marked critical point.  

\subsection{Height functions} Let $K$ be a number field and let $\Kbar$ be its algebraic closure. It is well known that $K$ is naturally equipped with a set $\Omega_K$ of pairwise inequivalent nontrivial absolute values, together with positive integers $N_v$ for each $v\in \Omega_K$ such that 
\begin{itemize}
\item for each $\alpha\in K^*$, we have $|\alpha|_v=1$ for all but finitely many places $v\in \Omega_K$. 
\item for every $\alpha\in K^*$, we have the {\em product formula}:
\begin{equation}\label{product formula}
\prod_{v\in \Omega_K}|\alpha|_v^{N_v}=1.
\end{equation}
\end{itemize}

For each $v\in \Omega_K$, let $K_v$ be the completion of $K$ at $v$, let $\Kbar_v$ be the algebraic closure of $K_v$ and let $\C_v$ denote the completion of $\Kbar_v$. The field $\C_v$ is algebraic closed; when $v$ is Archimedean, $\C_v\simeq\C$. We fix an embedding of $\Kbar$ into $\C_v$ for each $v\in \Omega_K$; hence we have a fixed extension of $|\cdot|_v$ on $K$ to $\Kbar$.  Let $f\in K[z]$ be a polynomial of degree $\geq 2$. There is a {\em canonical height} $\hhat_f$ on $\Kbar$ (see also \eqref{first canonical height}),  which is given by 
\begin{equation}\label{canonical height}
\hhat_f(x):=\frac{1}{[K(x):\Q]}\lim_{n\to \infty}\sum_{y\in \Gal(\Kbar/K)\cdot x}\sum_{v\in \Omega_K}N_v\cdot\frac{\log^+|f^n(y)|_v}{d^n}
\end{equation}
where $\Gal(\Kbar/K)\cdot x$ is Galois orbit of $x$ in $\Kbar$. As proven in \cite{Call:Silverman}, for any $x\in \Kbar$, we have that $\hhat_f(x)\geq 0$ with equality if and only if $x$ preperiodic under the iteration of $f$. Counted with multiplicity, a degree $d\geq 2$ polynomial $f$ has exactly $d-1$ critical points. As introduced by Silverman, the {\em critical height} $\hhat_{\crit}$ of a polynomial is given by the sum of the canonical heights of its $(d-1)$ critical points (see \eqref{first critical height}). Clearly, $\hhat_{\crit}(f)\geq 0$ with equality if and only if $f$ is PCF. 

\subsection{Arithmetic equidistribution theorem}\label{arith equip} Let $X$ be an irreducible projective curve defined over a number field $K$, and $L$ be an ample line bundle of $X$. Replacing the absolute value $|\cdot |$ in Section~\ref{metric line bundle} by $|\cdot|_v$, we can define metrics $\|\cdot\|_v$ on $L$ corresponding to each $v\in \Omega$. Let $X_{\C_v}^{an}$ be the analytic space associated to $X(\C_v)$. When $v$ is Archimedean, $X_{\C_v}^{an}\simeq X(\C)$.  For Archimedean $v$, a continuous metric $\|\cdot \|_v$ on a line bundle is {\em semi-positive}, if its curvature $dd^c\log \|\cdot \|_v$ is non negative. For non-Archimedean place $v$, $X_{\C_v}^{an}$ is the Berkovich space associated $X(\C_v)$, and Chambert-Loir \cite{CLoir, CLoir11} constructed an analog of curvature on $X_{\C_v}^{an}$ using
methods from Berkovich spaces.

An {\em adelic metrized line bundle} 
  $$\Lbar:=\{L, \{\|\cdot\|_v\}_{v\in \Omega_K}\}$$
over $L$ is a collection of metrics on $L$, one for each place $v\in \Omega_K$, satisfying certain continuity and coherence conditions; see \cite{Zhang:line,Zhang:metrics}. Precisely, the metric $\|\cdot\|_v$ on $L$ should be continuous for each $v\in \Omega_K$. Moreover, we require that: there exists a model $(\mathfrak{X}, \mathscr{L}, e)$ over the ring of integers of $K$ inducing the given metrics for all but finitely many places $v\in \Omega_K$. An adelic metrized line bundle is said to be {\em  semi-positive} if the metrics are semi-positive at all places; see \cite{CLoir11} for more details.

For example, we can construct an adelic metrized line bundle from a non constant morphism $g: X(K)\to \P^1(K)$ for an irreducible smooth projective curve $X$ defined over a number field $K$. Let 
  $$D:=\sum_{x\in X: ~\ord_x g<0} (-\ord_x g)\cdot x$$
be the Weil divisor corresponding to the poles of $g$. For each $v\in \Omega_K$, the metric $\|\cdot\|_v$ on $L_D|_{X\backslash D}$ is defined as
   $$\|s(x)\|_v:=e^{-\log^+|g|_v}\cdot |\phi(s(x))|_v$$
for $s$ being a section of $L_D|_{X\backslash D}$ and $\phi$ being the canonical isomorphism $L_D|_{X\backslash D}\simeq (X\backslash D)\times \C_v$. This metric can be extended to the whole $L_D$, and the extended metric is continuous and semi-positive. The metrized line bundle $\Lbar_D:=\{L_D, \{\|\cdot\|_v\}_{v\in\Omega_K}\}$ constructed this way is adelic. The simplest one is when $X=\P^1$ and $g$ is the identity map, i.e., we get an adelic metrized line bundle $\overline{\O}_{\P^1}(1)$. For general non constant morphisms $g: X\to \P^1$, it is obvious $\Lbar_D\simeq g^*\overline{\O}_{\P^1}(1)$.

For a semi-positive line bundle $\Lbar$ of $C$, there is a height $\hhat_{\Lbar}(Y)$ associated to it for each subvariety $Y$ of $X$; see \cite{Zhang:metrics}. In the case of points $x\in X(\Kbar)$, the height is given by 
\begin{equation}\label{metrized height}
\hhat_{\Lbar}(x):=\frac{1}{[K: \Q]}\sum_{y\in \Gal(\Kbar/K)\cdot x}\sum_{v\in \Omega_K}\frac{-N_v\cdot \log\|s(y)\|_v}{|\Gal(\Kbar/K)\cdot x|}
\end{equation}
where $|\Gal(\Kbar/K)\cdot x|$ is the number of points in the Galois orbit of $x$, and $s$ is any meromorphic section of $L$ with support disjoint from $\Gal(\Kbar/K)\cdot x$. A sequence of points $x_n\in X(\Kbar)$ is said to be {\em small} if $\lim_{n\to \infty} \hhat_{\Lbar}(x_n)=\hhat_{\Lbar}(X)$. For all cases considered in this article, we will see that $\hhat_{\Lbar}(X)$ is always zero.  
\begin{theorem}\label{arith equi dis thM}\cite{CLoir, Thuillier, Yuan} Suppose $X$ is a projective curve over a number field $K$ and $\Lbar$ is a semi-positive adelic metrized line bundle on $X$ with $L$ being ample. Let $\{x_n\}$ be a non-repeating sequence of points in $X(\Kbar)$ which is small. Then for any $v\in \Omega_K$, the Galois orbits of the sequence $\{x_n\}$ are equidistributed in the analytic space $X_{\C_v}^{an}$ with respect to the probability measure $\mu_v=c_1(\Lbar)_v/\deg_L(X)$. 
\end{theorem}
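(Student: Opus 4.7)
The plan is to deduce the equidistribution from the arithmetic equidistribution theorem of Chambert-Loir, Thuillier, and Yuan (Theorem~\ref{arith equi dis thM}), using as input an adelic version of the metrized line bundle constructed in Section~\ref{continuous line bundle}. Concretely, after enlarging $K$ so that $C$ and its normalization are defined over $K$, the three tasks are: (i) upgrade the construction of Section~\ref{metric line bundle} to an \emph{adelic, semi-positive, ample} metrized line bundle $\Lbar_n^+$ on $C$; (ii) check that parameters in $\textup{Preper}_+\cap C(\Kbar)$ have height zero with respect to $\Lbar_n^+$ and that $\hhat_{\Lbar_n^+}(C)=0$; (iii) identify the normalized archimedean curvature of $\Lbar_n^+$ with $\mu_+/\mu_+(C)$.

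For (i), fix $n$ large enough that Theorem~\ref{continuity thm}, Lemma~\ref{order of pole growth}, and the isometry $L_{D_n^+}^{\otimes 3^i}\simeq L_{D_{n+i}^+}$ of Section~\ref{metric line bundle} all apply. The line bundle $L_{D_n^+}$ is ample because $D_n^+$ is effective and $S_0^+$ is nonempty, the latter being exactly the hypothesis that $+a$ is not persistently preperiodic (Lemma~\ref{non empty poles}). At each place $v\in\Omega_K$, define
\begin{equation*}
G_v^+(t):=\lim_{m\to\infty}\frac{1}{3^m}\log^+|f^m_{a(t),b(t)}(a(t))|_v,
\end{equation*}
and set $\|s(t)\|_v:=e^{-3^n G_v^+(t)}\cdot|\phi(s(t))|_v$ via the trivialization (\ref{canonical morph of line}). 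At archimedean $v$ the continuity and semi-positivity of this metric is precisely Theorem~\ref{continuity thm}. At non-archimedean $v$, the proofs of Lemmas~\ref{convergence 1}, \ref{convergence 2}, \ref{continuity at bad point} transplant essentially unchanged, the ultrametric inequality making them strictly easier, and Chambert-Loir's formalism on Berkovich analytifications \cite{CLoir11} gives the analogue of semi-positivity. Adelic coherence holds because for all but finitely many $v$, the coefficients $a,b\in K(C)$ are $v$-integral and the local escape-rate metric agrees with the model metric on $L_{D_n^+}$ pulled back through $c_n\colon C\to\P^1$, which realizes $\Lbar_n^+$ locally as a pullback of a standard semi-positive model bundle on $\P^1$.

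For (ii), if $t\in\textup{Preper}_+\cap C(\Kbar)$ then $a(t)$ is preperiodic for $f_{a(t),b(t)}$, so by \cite{Call:Silverman} its canonical height (\ref{canonical height}) vanishes; a routine telescoping using the product formula to cancel the section-dependent term in (\ref{metrized height}) identifies $\hhat_{\Lbar_n^+}(t)=3^n\cdot\hhat_{f_{a(t),b(t)}}(a(t))=0$. To conclude $\hhat_{\Lbar_n^+}(C)=0$, combine this with Proposition~\ref{infinite preperiodic points} (Zariski density of $\textup{Preper}_+$ in $C$) and Zhang's inequality for adelic semi-positive line bundles on curves: the essential minimum of $\hhat_{\Lbar_n^+}$ on $C(\Kbar)$ is $\le 0$, while semi-positivity gives $\hhat_{\Lbar_n^+}(C)\ge 0$.

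For (iii), the archimedean identity $\log\|s\|_v^{-1}=3^n G_v^+ - \log|\phi(s)|_v$, together with the harmonicity of $\log|\phi(s)|_v$ away from the divisor of $s$, gives $c_1(\Lbar_n^+)=3^n\mu_+$; integrating, $\deg L_{D_n^+}=3^n\mu_+(C)$. Theorem~\ref{arith equi dis thM} applied to any non-repeating sequence $t_n\in\textup{Preper}_+$ (small by (ii)) now yields weak convergence of $\mu_n$ to $c_1(\Lbar_n^+)/\deg_{L_{D_n^+}}(C)=\mu_+/\mu_+(C)$, the factors of $3^n$ cancelling; the argument for $-a$ is verbatim. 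The principal obstacle this plan must overcome is the continuity and semi-positivity of $\|\cdot\|_v$ across points of $S^+\setminus S_0^+$, where the poles of $c_n$ stop growing yet the escape-rate function a priori misbehaves, preventing the usual tactic of presenting $\Lbar_n^+$ as a uniform limit of pullbacks of a fixed bundle; this is exactly the delicate analysis carried out in Lemma~\ref{continuity at bad point} and is what makes it possible to run the equidistribution machinery against the single bundle $L_{D_n^+}$.
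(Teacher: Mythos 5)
You have not proved Theorem~\ref{arith equi dis thM}. That statement is the general arithmetic equidistribution theorem on curves, due to Chambert-Loir, Thuillier, and Yuan; the paper merely cites it from \cite{CLoir, Thuillier, Yuan} and offers no proof, since establishing it afresh would require the full apparatus of arithmetic intersection theory and potential theory on Berkovich analytic curves. Your proposal instead establishes a \emph{consequence} of that theorem: the equidistribution of preperiodic parameters on the plane curve $C$ with respect to the bifurcation measure, which is essentially the paper's Theorem~\ref{equidistribution of parameters} (and the unnamed theorem of the introduction; see Remark~\ref{remark 11}). Since your very first sentence explicitly invokes Theorem~\ref{arith equi dis thM} as the engine of the argument, the proposal is circular as a proof of that theorem: a statement cannot be derived from itself.

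As a proof of the downstream equidistribution statement, your outline is largely faithful to the paper's. The three tasks you identify --- adelic continuity and semi-positivity of the escape-rate metric (Theorem~\ref{continuity thm} at archimedean places, transferred to Berkovich spaces non-archimedeanly), the height identity $\hhat_{\Lbar_{D_{n_0}^+}}(t)=3^{n_0}\hhat_+(t)$ together with $\hhat_{\Lbar_{D_{n_0}^+}}(C)=0$, and the identification of the normalized archimedean curvature with $\mu_+/\mu_+(C)$ --- mirror what is done around Theorem~\ref{equidistribution of parameters}. Two points you would need to make precise: the paper checks adelic coherence via Ingram's variation-of-canonical-height results (Lemma~\ref{adelic}, invoking \cite{ingram}), not by the informal claim that the metric agrees with a model metric pulled back through $c_n$ for almost all $v$; and applicability of the equidistribution theorem to metrics that are only obtained as limits on \emph{different} underlying line bundles (precisely because $S^\pm\setminus S_0^\pm$ may be nonempty) requires a justification such as Thuillier's $W^1$-regularity or the Yuan--Zhang stability of semi-positivity under limits, both of which the paper explicitly flags and which your sketch glosses over.
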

Here $c_1(\Lbar)_v$ is  the curvature of the metric on $L$ for each place $v\in \Omega_K$.

\subsection{Equidistribution of parameters with small heights} Analog to (\ref{escape-rate definition}), for each $v\in \Omega_K$, we define 
   $$G^\pm_v(a,b):=\lim_{n\to \infty} \frac{\log^+|f_{a,b}^n(\pm a)|_v}{3^n}.$$
There is a canonical metric $\|\cdot\|_v$ on each $L_{D^\pm_n}$ (hence canonical metrics $\|\cdot\|_v$ on $L^{\otimes i}_{D^\pm_n}$ for $i\geq 1$) for $n$ sufficient large given by (see (\ref{metric definition})) 
 \begin{equation}\label{metric for all places}
 \|s(t)\|_v:=e^{-3^n\cdot G_v^\pm}\cdot |\phi(s(t))|_v. 
 \end{equation}
More precisely, $n$ needs to be sufficiently large so that for each $t_0\in S_0^\pm$, we have that $\gamma_{m+1}^\pm=3\cdot \gamma_m^\pm$ for $m\ge n$, where $\gamma_m^\pm$ is the order of the pole at $t_0$ of $f_{a,b}(\pm a)$ (see Lemma~\ref{order of pole growth}).  For such a positive integer $n$, we have
\begin{equation}
\label{good n}
L_{D_{n+i}^\pm}=L_{D_n^\pm}^{\otimes{3^i}}\text{ for each }i\ge 1.
\end{equation}
Line by line examination of Section \ref{continuous line bundle} indicates that $\|\cdot\|_v$ on $L_{D^\pm_n}$ is continuous and semi-positive for each $v\in \Omega$. Actually,  we use only Taylor series, triangle inequality, Maximum Value Theorem and elementary algebra for the proofs of lemmas in Section \ref{continuous line bundle}, and all these work when dealing with the cases involving non-Archimedean norm $|\cdot|_v$. 

Now, let $n_0$ be a sufficiently large positive integer so that \eqref{good n} holds, and let $\Lbar_{D_n^{\pm}}:=\{L_{D_n^\pm}, \{\|\cdot\|_v\}_{v\in \Omega_K}\}$ be the metrized line bundle constructed above.

\begin{lemma}\label{adelic}
Suppose $+a$ is not persistently preperiodic under $f_{a,b}$ on $C$, then for any sufficiently large $n$, there is non constant rational function $g$ on $C$ with 
   $$3^n\cdot G^+_v=\log^+|g|_v$$
on $C$ for all but finitely many $v\in \Omega_K$. 
\end{lemma}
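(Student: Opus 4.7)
The plan is to take $g := c_n = f_{a,b}^n(+a)$, regarded as a rational function in $K(C)$, for $n$ sufficiently large. Non-constancy is immediate from Lemma \ref{bounded degree}: since $+a$ is not persistently preperiodic on $C$, the degree of $c_n$ grows without bound, so $c_n$ is non-constant once $n$ is large enough.

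To verify the equality $3^n G^+_v = \log^+|c_n|_v$ at almost every place, I would first discard the finite set consisting of the Archimedean places of $K$, the non-Archimedean places with $|3|_v \ne 1$, and the further finitely many non-Archimedean places at which the leading Laurent coefficients of $a$, $b$, and $c_n$ at each point of $S^+$ fail to be $v$-integer units. For each remaining good place, the strategy is to verify the identity pointwise on the Berkovich analytic space $C_{\C_v}^{an}$ using ultrametric cubic dominance.

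The crucial estimate is the standard non-Archimedean cubic dominance: whenever $|3|_v = 1$ and $|c_m(t)|_v > \max(|a(t)|_v, |b(t)|_v^{1/3}, 1)$, the cubic term dominates in $c_{m+1} = c_m^3 - 3a^2 c_m + b$, so $|c_{m+1}(t)|_v = |c_m(t)|_v^3$, hence $\log^+|c_{m+1}(t)|_v = 3\log^+|c_m(t)|_v$. On the portion of $C_{\C_v}^{an}$ where $|a|_v, |b|_v \le 1$, a straightforward case analysis (contrasting $|c_m|_v \le 1$ with $|c_m|_v > 1$) shows $\log^+|c_{m+1}|_v = 3\log^+|c_m|_v$ for every $m$, so iteration yields $\log^+|c_n|_v = 3^n G^+_v$ for all $n$. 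In a Berkovich neighborhood of any $t_0 \in S_0^+$, once $n$ is large enough that $\gamma_n > \gamma_{\max}$ (Lemma \ref{order of pole growth}), the pole of $c_n$ dominates those of $a$ and $b^{1/3}$, and cubic dominance again gives the identity there.

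The main obstacle is reconciling the two sides in the $v$-adic tubes around the points $t_0 \in S^+ \setminus S_0^+$, where $c_n$ has a persistent pole of order $\gamma_a$ while the non-Archimedean analogue of Theorem \ref{continuity thm} forces $G^+_v$ to extend continuously to zero. To handle this, I would invoke the non-Archimedean analogue of Lemma \ref{continuity at bad point}, together with Lemma \ref{finitely many coefficients}, which constrains the leading Laurent coefficients of $c_n$ at such points to the finite set of solutions of $x^3 - 3x + 2 = 0$; for almost every $v$ these algebraic numbers are $v$-integer units, which keeps the requisite normalizations compatible with the identity. The identity then extends from the generic part of $C_{\C_v}^{an}$ to the entire Berkovich space by continuity of both sides.
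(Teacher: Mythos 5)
Your proposal takes a very different route from the paper: the paper's proof of Lemma \ref{adelic} is a one-line citation to Ingram (replacing the divisor in \cite[Lemma 6]{ingram} by $D^+_n$ and invoking \cite[Lemma 13]{ingram}), whereas you attempt a direct Berkovich-space verification with the explicit choice $g = c_n$. That direct route is appealing, and your analysis of the generic region ($|a|_v, |b|_v\le 1$) and of the residue disks around $S_0^+$ is correct. However, there is a genuine gap precisely where you flag ``the main obstacle,'' and the fix you suggest does not actually close it.

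The problem is that $g=c_n$ simply cannot satisfy $3^nG^+_v = \log^+|g|_v$ on the $v$-adic residue disk of a point $t_0\in S^+\setminus S_0^+$, and this failure persists \emph{after} discarding any finite set of places. Take $v$ non-Archimedean with $|2|_v=|3|_v=1$ and with all relevant leading Laurent coefficients at $t_0$ being $v$-units (these are the ``good'' $v$ you keep). Writing $u$ for a uniformizer at $t_0$, Lemma \ref{order of pole growth} gives $\ord_{t_0}c_m=-\gamma_a$ for \emph{every} $m$, and Lemma \ref{finitely many coefficients} forces the leading coefficient of $u^{\gamma_a}c_m$ to lie in $\{1,-2\}$, hence to be a $v$-unit. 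Therefore $|c_m(t)|_v = |u(t)|_v^{-\gamma_a}$ for all $m$ on the residue disk, so $\log^+|c_m(t)|_v = -\gamma_a\log|u(t)|_v$ is \emph{independent of $m$} and strictly positive, and consequently $G^+_v(a(t),b(t)) = \lim_m 3^{-m}\log^+|c_m(t)|_v = 0$ there. So on that disk the left side $3^nG^+_v$ is identically $0$ while the right side $\log^+|c_n(t)|_v = -\gamma_a\log|u(t)|_v > 0$. The unit-coefficient information you invoke from Lemma \ref{finitely many coefficients} does not rescue the identity --- it \emph{forces} the discrepancy, because it pins the value $|c_n(t)|_v = |u(t)|_v^{-\gamma_a}$ exactly. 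Likewise, appealing to continuity cannot help: both sides are continuous on the disk, and they are genuinely different continuous functions.

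The structural reason is that the correct $g$ must be a rational function on the projective model of $C$ whose pole divisor is supported on $S_0^+$ (matching $D_n^+$), \emph{not} on all of $S^+$. By contrast, $c_n$ has poles of order $\gamma_a$ at every point of $S^+\setminus S_0^+$. This is exactly the subtlety the paper emphasizes in the introduction (``because the sets $S^\pm\setminus S_0^\pm$ can be nonempty, unlike all of the previous articles\ldots''), and it is the content that Ingram's Lemma 13 supplies: a Weil function for the divisor $D_n^+$ that agrees with $3^nG^+_v$ at almost all places. To repair the argument you would need to replace $c_n$ by a rational function $g$ whose divisor of poles is precisely $D_n^+$ and whose leading Laurent data at $S_0^+$ matches that of $c_n$ (up to $v$-units at almost all $v$), and then re-run your cubic-dominance and residue-disk analysis to check the identity at the zeros of $g$ as well; simply taking $g=c_n$ does not work.
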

\proof Replacing the divisor $d^N(d-1)D$ in \cite[Lemma 6]{ingram} by $D^+_n$, we obtain the desired result from \cite[Lemma 13]{ingram}.\qed

For $g$ being the rational map as in the conclusion of Lemma~\ref{adelic}, we get that the isomorphism $L_{D_n^+}\simeq L_D:=g^*\O_{\P^1}(1)$ is an isometry for all but finitely many places $v\in \Omega_K$. Similarly, Lemma~\ref{adelic} holds for the marked critical point $-a$ and its escape-rate function $G_v^-$. 

We note that Thuillier \cite{Thuillier} defined the notion of being $W^1$-regular for a metric on a line bundle at an arbitrary place $v$ (both Archimedean and non-Archimedean); then Thuillier \cite[Theorem~4.3.7]{Thuillier} proves the equidistribution theorem for points of small height with respect to adelic, $W^1$-regular metrics on ample line bundles on a curve. Also, as proven by Thuillier (we thank Laura DeMarco and Myrto Mavraki for sharing with us Thuillier's note), every continuous subharmonic metric is $W^1$-regular, and therefore Theorem~\ref{arith equi dis thM} holds for such metrics, as it is the case for the metrics we constructed on $L_{D_{n_0}^\pm}$. We also thank Xinyi Yuan for pointing out that Theorem~\ref{arith equi dis thM} holds for the metrics constructed on $L_{D_{n_0}^\pm}$ by employing \cite[Proposition~A.8]{Yuan-Zhang}; Yuan and Zhang  prove that the limit of semipositive metrics is still semipositive, even when the
underlying line bundles in the limit process are not the same. 

Combining Lemma \ref{adelic} and Section \ref{arith equip}, one obtains the following:

\begin{cor}\label{adelic cor}
Suppose $+a$ (resp. $-a$) is not persistently preperiodic under $f_{a,b}$ on $C$, then $\Lbar_{D_{n_0}^{\pm}}$ is a semi-positive adelic metrized line bundle on $C$. 
\end{cor}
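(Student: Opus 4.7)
The plan is to verify the two defining properties of a semi-positive adelic metrized line bundle in turn: semi-positivity of the local metric at each place $v \in \Omega_K$, and the existence of a common integral model inducing these metrics at all but finitely many places.

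For \textbf{semi-positivity}, I would rerun the arguments of Section~\ref{continuous line bundle} verbatim, replacing the Archimedean absolute value by an arbitrary $|\cdot|_v$, so that the escape-rate function
\[
G_v^\pm(a,b) = \lim_{n\to\infty} \frac{\log^+|f_{a,b}^n(\pm a)|_v}{3^n}
\]
is defined on $C_{\C_v}^{an}$. As the authors already note, the proofs of Theorem~\ref{continuity thm} and Lemmas~\ref{convergence 1}, \ref{convergence 2}, \ref{continuity at bad point} use only Taylor expansions, the triangle inequality, the Maximum Modulus principle, and elementary algebra, all of which adapt to the Berkovich analytic space (with the ultrametric inequality in fact sharpening several of the bounds). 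This yields a continuous extension of $G_v^\pm$ whose $dd^c$, in the sense of Chambert-Loir at non-Archimedean places, is a non-negative measure. Via formula~(\ref{metric for all places}), this supplies the required continuous semi-positive metric $\|\cdot\|_v$ on $L_{D_{n_0}^\pm}$ at every $v$.

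For \textbf{adelic coherence}, I would invoke Lemma~\ref{adelic} applied to $+a$ (the case of $-a$ is symmetric). It yields a non-constant $g \in K(C)$ with $3^{n_0} G_v^+ = \log^+|g|_v$ for all but finitely many $v \in \Omega_K$. Since $D_{n_0}^+$ is exactly the divisor of poles of $g$, there is a canonical identification $L_{D_{n_0}^+} \simeq g^*\O_{\P^1}(1)$, and definition~(\ref{metric for all places}) shows that this identification is an isometry with respect to the pullback metric $g^*\overline{\O}_{\P^1}(1)$ at each such place $v$. Since $\overline{\O}_{\P^1}(1)$ is itself induced by the tautological model on $\P^1_{\O_K}$, pulling that model back along $g$ produces an integral model for $L_{D_{n_0}^+}$ whose associated metrics coincide with the $\|\cdot\|_v$ at cofinitely many $v$; this is precisely the adelic coherence condition.

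The main delicate point I expect is the non-Archimedean continuity at parameters $t_0 \in S^\pm \setminus S_0^\pm$: one has to carry the combinatorial argument of Lemma~\ref{finitely many coefficients} and the shrinking-radii estimates after it through for $\C_v$-coefficient power series, and confirm that the sets $\{|t|_v \le r_{n_0+j}\}$ still form neighborhoods in $C_{\C_v}^{an}$ on which the limit is uniform. Each individual step is purely algebraic or uses only absolute-value bounds, so the passage to $|\cdot|_v$ is mechanical, but this is the step that requires the greatest care to record rigorously.
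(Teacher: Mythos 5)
Your proposal follows the paper's argument almost exactly: the paper also obtains semi-positivity at each place by observing that Section~\ref{continuous line bundle} uses only Taylor series, the triangle inequality, the Maximum Value Theorem, and elementary algebra (all of which transfer to $|\cdot|_v$ and the associated Berkovich analytic space), and it obtains adelic coherence from Lemma~\ref{adelic} via the isometry $L_{D_{n_0}^+}\simeq g^*\O_{\P^1}(1)$ at all but finitely many places. Your closing caveat about carrying the non-Archimedean analogue of Lemma~\ref{continuity at bad point} is a fair point to flag, but note that Lemma~\ref{adelic} already makes the metric explicitly $\log^+|g|_v$ at cofinitely many places, so the delicate power-series analysis is needed only at the remaining finitely many $v$ --- which is exactly where the paper, like you, appeals to the mechanical transfer of Section~\ref{continuous line bundle}.
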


For $t\in C(\Kbar)$, we let $\hhat_\pm(t):=\hhat_{f_{a(t), b(t)}}(\pm a(t))$
 be the height of the two critical points of $f_{a,b}$. From the definition of the height corresponding to the metrized line bundle $\Lbar_{D_{n_0}^\pm}$, and any $t\in C(\Kbar)\backslash S^\pm$, one has 

\begin{eqnarray*}
\hhat_{\Lbar_{D_{n_0}^\pm}}(t)&=&\frac{1}{[K: \Q]}\sum_{y\in \Gal(\Kbar/K)\cdot t}\sum_{v\in \Omega_K}\frac{-N_v\cdot \log\|s(y)\|_v}{|\Gal(\Kbar/K)\cdot t|} \textup{ , by (\ref{metric for all places})}\\
&=& \frac{1}{[K: \Q]}\sum_{y\in \Gal(\Kbar/K)\cdot t}\sum_{v\in \Omega_K}\frac{N_v\cdot \left(3^{n_0}G_v^\pm - \log |\phi(s(y))|_v\right)}{|\Gal(\Kbar/K)\cdot t|} \textup{ , by (\ref{product formula})}\\
&=&\frac{3^{n_0}}{[K: \Q]}\sum_{y\in \Gal(\Kbar/K)\cdot t}\sum_{v\in \Omega_K}\frac{N_v\cdot G_v^\pm(a(y), b(y))}{|\Gal(\Kbar/K)\cdot t|} \\
&=&3^{n_0}\cdot \frac{\lim_{i\to \infty}\sum_{y\in \Gal(\Kbar/K)\cdot t} \sum_{v\in \Omega_K}\frac{N_v\cdot \log^+ \left|f^i_{a(y), b(y)}(\pm a(y))\right|_v}{3^i}}{[K: \Q]\cdot |\Gal(\Kbar/K)\cdot t|}\\
&=&3^{n_0}\cdot \hhat_\pm(t). 
\end{eqnarray*}
From the formula in Lemma \ref{convergence 2} (see also Theorem~\ref{continuity thm}~(ii)) for $t_0\in S_0^\pm$, using also Lemma~\ref{order of pole growth}, we conclude that 
\begin{eqnarray*}
\hhat_{\Lbar_{D_{n_0}^\pm}}(t_0)&=&\frac{1}{[K:\Q]}\sum_{v\in \Omega_K} \lim_{i\to \infty} \frac{1}{3^{i}}\log \left|u^{\gamma_{n_0+i}}f^{n_0+i}_{a,b}(\pm a)(t_0)\right|_v\\
&=&\frac{1}{[K:\Q]}\sum_{v\in \Omega_K} \lim_{i\to \infty}\log |u^{\gamma_{n_0}}f^{n_0}_{a,b}(\pm a)(t_0)|_v\\
&=&\frac{1}{[K:\Q]}\sum_{v\in \Omega_K} \log |u^{\gamma_{n_0}}f^{n_0}_{a,b}(\pm a)(t_0)|_v\\
&=& 0.
\end{eqnarray*}
The first equality above follows since
\begin{equation}
\label{equality at t_0}
\left(u^{\gamma_{n+1}}c_{n+1}^\pm\right)(t_0) = \left((u^{\gamma_n}c_n^\pm)(t_0)\right)^3 
\end{equation}
for each $n\ge n_0$, where (as always) $c_n^\pm:=f_{a,b}^n(\pm a)$. Indeed, since 
$$c_{n+1}=c_n^3-3a^2c_n+b,$$
and $c_n$ has a pole at $t_0$ of order $\gamma_n$, which is larger than $\max\{-\ord_{t_0}(a), -\ord_{t_0}(b)/3\}$ (see Lemma~\ref{order of pole growth}), then we get \eqref{equality at t_0}. 

Also for $t_0\in S^\pm\backslash S^\pm_0$, from (\ref{local limit}) which yields $G_v^\pm(a(t_0), b(t_0))=0$ (see also Theorem~\ref{continuity thm}~(i)) for each $v\in \Omega_K$; therefore $\hhat_{\Lbar_{D_{n_0}^\pm}}(t_0)=0$.

\begin{theorem}\label{equidistribution of parameters}
Let $C\subset \C^2$ be an irreducible curve defined over some number field $K$ such that $\pm a$ is not persistently preperiodic for $f_{a,b}$ on $C$, and let $n_0$ be a sufficiently large integer (as in the conclusion of Lemma~\ref{order of pole growth}). Then for any non-repeating sequence of points $t_n\in C(\Kbar)$ with $\lim_{n\to \infty} \hhat_{\Lbar_{D_{n_0}^\pm}}(t_n)=0$, the Galois orbits of $\{t_n\}_{n\geq 1}$ equidistribute with respect to the probability measure $\mu_v^\pm=c_1(\Lbar_{D_{n_0}^\pm})_v/\deg (D^\pm_{n_0})$ on $C_{\C_v}^{an}$ for each $v\in \Omega_K$. 
\end{theorem}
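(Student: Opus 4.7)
The plan is to apply the arithmetic equidistribution theorem (Theorem \ref{arith equi dis thM}) directly to the semi-positive adelic metrized line bundle $\Lbar_{D_{n_0}^\pm}$ constructed in Section~\ref{continuous line bundle}. Most of the work has already been done; what remains is to check the hypotheses of Theorem~\ref{arith equi dis thM} and to identify the limit measure explicitly.

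First, I verify the structural hypotheses. Adelic semi-positivity is furnished by Corollary~\ref{adelic cor}, together with the remark (using Thuillier's $W^1$-regularity of continuous semi-positive metrics, and the Yuan--Zhang result \cite[Prop.~A.8]{Yuan-Zhang} on limits of semi-positive metrics) that Theorem~\ref{arith equi dis thM} does apply to the metrics constructed here, even though at points of $S^\pm\setminus S_0^\pm$ the metric is not obtained as a uniform limit on a fixed line bundle. Ampleness of $L_{D_{n_0}^\pm}$ is automatic on the curve $C$ once $\deg(D_{n_0}^\pm)>0$, which follows from the standing hypothesis that $\pm a$ is not persistently preperiodic: by Lemma~\ref{non empty poles}, $S_0^\pm \neq \emptyset$, and each $t_0\in S_0^\pm$ contributes positive multiplicity $\gamma_{n_0}^\pm>0$ to $D_{n_0}^\pm$.

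Second, I identify the curvature. At an Archimedean place, for a nonvanishing local section $s$ of $L_{D_{n_0}^\pm}$ over $C\setminus S^\pm$, equation~(\ref{metric for all places}) gives
\[
-\log\|s\|_v \;=\; 3^{n_0}\,G_v^\pm \;-\; \log|\phi(s)|_v,
\]
and since $\log|\phi(s)|_v$ is pluriharmonic there, the curvature on $C\setminus S^\pm$ is $3^{n_0}\,dd^c G_v^\pm = 3^{n_0}\mu_v^\pm$. The continuity statements in Theorem~\ref{continuity thm} (together with the two-term expansion from Lemma~\ref{convergence 2} near $S_0^\pm$ and the vanishing of $G_v^\pm$ near $S^\pm\setminus S_0^\pm$) show that this identification extends across $S^\pm$ and that the total mass equals $\deg(D_{n_0}^\pm)$, so after normalization we recover the stated measure $c_1(\Lbar_{D_{n_0}^\pm})_v/\deg(D_{n_0}^\pm)$. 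The non-Archimedean case is identical, using Chambert-Loir's curvature on the Berkovich analytification $C_{\C_v}^{an}$.

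Third, I identify $\hhat_{\Lbar_{D_{n_0}^\pm}}(C) = 0$, which is needed to ensure that the hypothesis $\hhat_{\Lbar_{D_{n_0}^\pm}}(t_n)\to 0$ is the correct notion of smallness. The height computation preceding the theorem statement already shows that $\hhat_{\Lbar_{D_{n_0}^\pm}}(t) = 3^{n_0}\hhat_\pm(t)\ge 0$ for $t\in C(\Kbar)\setminus S^\pm$ and $\hhat_{\Lbar_{D_{n_0}^\pm}}(t_0)=0$ on $S^\pm$. Proposition~\ref{infinite preperiodic points} supplies infinitely many parameters where $\pm a$ is preperiodic, hence an infinite (non-repeating) sequence of points of height zero, so Zhang's inequality on a curve forces $\hhat_{\Lbar_{D_{n_0}^\pm}}(C)\le 0$; combined with the non-negativity just observed, $\hhat_{\Lbar_{D_{n_0}^\pm}}(C)=0$. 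Thus any sequence with $\hhat_{\Lbar_{D_{n_0}^\pm}}(t_n)\to 0$ is small, and Theorem~\ref{arith equi dis thM} yields the equidistribution claim at every place $v\in\Omega_K$.

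The main technical point — and the entire reason for the careful analysis of Section~\ref{continuous line bundle} — is the first step: ensuring the equidistribution theorem genuinely applies to the metric we built, in particular across the pathological set $S^\pm\setminus S_0^\pm$ where the metric is continuous but is not obtained as a uniform limit of smooth semi-positive metrics on a fixed line bundle. Once one accepts the Thuillier / Yuan--Zhang regularity inputs, the remaining steps are essentially a bookkeeping exercise combining Corollary~\ref{adelic cor}, Theorem~\ref{continuity thm}, Proposition~\ref{infinite preperiodic points}, and the height computation that precedes the theorem statement.
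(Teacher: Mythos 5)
Your proposal is correct and follows essentially the same route as the paper: apply Theorem~\ref{arith equi dis thM} to the semi-positive adelic metrized line bundle furnished by Corollary~\ref{adelic cor}, then reduce to checking $\hhat_{\Lbar_{D_{n_0}^\pm}}(C)=0$ via the non-negativity of the height together with Proposition~\ref{infinite preperiodic points} and Zhang's theorem. The paper states this tersely and leaves the ampleness and curvature-identification checks implicit; your write-up simply fills them in.
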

\proof To prove this theorem, by Theorem \ref{arith equi dis thM} and Corollary \ref{adelic cor}, it suffices to show that $\hhat_{\Lbar_{D_{n_0}^\pm}}(C)=0$. We know that $\hhat_{\Lbar_{D_{n_0}^\pm}}(t)=3^{n_0}\cdot \hhat_\pm(t)\geq 0$ for $t\in C\setminus S^\pm$ with equality if and only $\pm a(t)$ is preperiodic under $f_{a(t), b(t)}$, we know that $\hhat_{\Lbar_{D_{n_0}^\pm}}$ is non-negative on $C$ (note also that $\hhat_{\Lbar_{D_{n_0}}}(t)=0$ if $t\in S^\pm$). Also, there are infinitely many $t\in C(\Kbar)$ with $\hhat_{\Lbar_{D_{n_0}^\pm}}(t)=0$ (see Proposition \ref{infinite preperiodic points}). Hence $\hhat_{\Lbar_{D_{n_0}^\pm}}(C)=0$ by \cite[Theorem 1.10]{Zhang:metrics}. \qed

\begin{remark}\label{remark 11}
If $v$ is Archimedean, then $C_{\C_v}^{an}\simeq C(\C)$ and $\mu_v^\pm$ is the normalization (total mass $1$ on $C$) of the bifurcation measure $\mu_\pm$ introduced in (\ref{bifurcation measures}). If $C$ is defined over $\Qbar$, then a point $t\in C(\Qbar)\setminus S^\pm$ has height zero for the adelic metrized line bundle $\Lbar_{D_{n_0}^\pm}$ if and only if the critical point $\pm a(t)$ is preperiodic under $f_{a(t), b(t)}$. Hence the set of parameters on $C$ for which the marked critical point $\pm a$ preperiodic under $f_{a,b}$  equidistribute with respect to the bifurcation measure $\mu_v^\pm$. 
\end{remark}

\begin{cor}\label{proportional escape cor}
 Let $C\subset \C^2$ be an irreducible curve defined over a number field $K$, satisfying 
\begin{itemize}
\item both $+a$ and $-a$ are not persistently preperiodic under $f_{a,b}$ on $C$. 
\item there is a sequence of non-repeating points $t_n\in C(\Kbar)$ with
   $$\lim_{n\to \infty}\hhat_{\crit}(f_{a(t_n),b(t_n)})=0.$$
\end{itemize}
Then for any sufficiently large $n_0$,  $\deg ({D^-_{n_0}})\cdot G_v^+=\deg ({D^+_{n_0}})\cdot G_v^-$ on $C$ for all $v\in \Omega_K$. 
\end{cor}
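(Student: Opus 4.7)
The plan is to apply Theorem~\ref{equidistribution of parameters} simultaneously to the two metrized line bundles $\Lbar_{D_{n_0}^{+}}$ and $\Lbar_{D_{n_0}^{-}}$, conclude that their normalized curvatures coincide at every place $v$, and then invoke Proposition~\ref{proportional escape-rate} to upgrade an equality of Laplacians to the desired equality of escape-rate functions.

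First, I convert the critical-height assumption into simultaneous smallness. Since $\hhat_{\crit}(f_{a(t),b(t)})=\hhat_{+}(t)+\hhat_{-}(t)$ is a sum of two nonnegative quantities, the hypothesis $\lim_{n}\hhat_{\crit}(f_{a(t_n),b(t_n)})=0$ forces $\lim_{n}\hhat_{+}(t_n)=\lim_{n}\hhat_{-}(t_n)=0$. The computation carried out immediately before Theorem~\ref{equidistribution of parameters} gives $\hhat_{\Lbar_{D_{n_0}^{\pm}}}(t)=3^{n_0}\hhat_{\pm}(t)$ for $t\notin S^{\pm}$ and $0$ on $S^{\pm}$, so $\{t_n\}$ is non-repeating and simultaneously small for $\Lbar_{D_{n_0}^{+}}$ and $\Lbar_{D_{n_0}^{-}}$. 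Since by hypothesis neither $+a$ nor $-a$ is persistently preperiodic on $C$, Corollary~\ref{adelic cor} ensures that both $\Lbar_{D_{n_0}^{\pm}}$ are semi-positive adelic metrized line bundles, so Theorem~\ref{equidistribution of parameters} applies to each. Consequently, for every $v\in\Omega_K$ the Galois orbits of $\{t_n\}$ equidistribute on $C_{\C_v}^{an}$ to both probability measures $\mu_v^{+}=c_1(\Lbar_{D_{n_0}^{+}})_v/\deg(D_{n_0}^{+})$ and $\mu_v^{-}=c_1(\Lbar_{D_{n_0}^{-}})_v/\deg(D_{n_0}^{-})$, and uniqueness of the weak limit of a sequence of probability measures forces $\mu_v^{+}=\mu_v^{-}$ for every $v\in\Omega_K$.

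Next, I unwind this equality of probability measures into an equality of Laplacians of the $G_v^{\pm}$. From the metric formula \eqref{metric for all places} together with the local description of $G_v^{\pm}$ near points of $S_0^{\pm}$ supplied by Theorem~\ref{continuity thm}(ii) (whose proof extends line by line to non-Archimedean $v$, as noted just after \eqref{metric for all places}), the logarithmic singularities of $G_v^{\pm}$ at $S_0^{\pm}$ cancel against the divisor of any local non-vanishing section, so that $c_1(\Lbar_{D_{n_0}^{\pm}})_v=3^{n_0}\,dd^{c}G_v^{\pm}$ on $C_{\C_v}^{an}\setminus(S^{+}\cup S^{-})$. The identity $\mu_v^{+}=\mu_v^{-}$ then reads $\deg(D_{n_0}^{-})\,dd^{c}G_v^{+}=\deg(D_{n_0}^{+})\,dd^{c}G_v^{-}$ on that set.

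Finally, at an Archimedean $v$ this is precisely the hypothesis of Proposition~\ref{proportional escape-rate} applied with $r=\deg(D_{n_0}^{+})/\deg(D_{n_0}^{-})$, whose conclusion is the required identity $\deg(D_{n_0}^{-})\,G_v^{+}=\deg(D_{n_0}^{+})\,G_v^{-}$ on $C(\C)$. For a non-Archimedean $v$ I would mimic the same outline: the continuous function $F_v:=\deg(D_{n_0}^{-})\,G_v^{+}-\deg(D_{n_0}^{+})\,G_v^{-}$ satisfies $dd^{c}F_v=0$ on the affine Berkovich curve and vanishes on the non-empty stability region where both critical orbits remain $v$-adically bounded (for example, on a polydisk around any parameter at which $a$ and $b$ are sufficiently close to $0$ in $|\cdot|_v$), so unique continuation of harmonic functions on a connected Berkovich analytic curve forces $F_v\equiv 0$. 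The main obstacle is precisely this last non-Archimedean step, because Proposition~\ref{proportional escape-rate} as stated invokes McMullen's complex-analytic theorem about embedded small copies of the Mandelbrot set, an input with no literal Berkovich analog; I expect to handle it by applying the Berkovich maximum principle to both $F_v$ and $-F_v$ (simultaneously subharmonic where $dd^{c}F_v=0$) and exploiting the explicit form of $f_{a,b}$ to produce the requisite open stability region.
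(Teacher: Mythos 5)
Your proposal is correct through the derivation of $\mu_v^+=\mu_v^-$ for all $v\in\Omega_K$, and your handling of Archimedean $v$ via Proposition~\ref{proportional escape-rate} matches the paper. The non-Archimedean step, however, has a genuine gap, and you have correctly flagged it as the weak point.

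Two issues with your proposed non-Archimedean argument. First, the existence of a non-empty $v$-adic stability region on $C$ is not established: $C$ is an arbitrary plane curve defined over $K$, and it may simply not pass through parameters where $|a|_v$ and $|b|_v$ are small, so the ``polydisk around a parameter where $a,b$ are close to $0$'' may have empty intersection with $C_{\C_v}^{an}$. Second, even granting a region where $F_v$ vanishes, ``unique continuation'' for harmonic functions on an affine Berkovich curve from an open subset is not a black box you can cite; the complex-analytic input in Proposition~\ref{proportional escape-rate} (a small copy of the Mandelbrot set in any neighbourhood of a bifurcation parameter, from McMullen) has no off-the-shelf Berkovich analogue, which you acknowledge.

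The paper sidesteps both problems. Having established the Archimedean identity $\deg(D_{n_0}^-)\,G_v^+=\deg(D_{n_0}^+)\,G_v^-$, it compares the pole growth at points of $S_0^\pm$ (via Theorem~\ref{continuity thm}) to deduce the divisor equality $\deg(D_{n_0}^-)\,D_{n_0}^+=\deg(D_{n_0}^+)\,D_{n_0}^-$, hence the isomorphism $L_{D_{n_0}^+}^{\otimes\deg(D_{n_0}^-)}\simeq L_{D_{n_0}^-}^{\otimes\deg(D_{n_0}^+)}$. Then it invokes \cite[Theorem~3.3]{Yuan12}: two continuous semi-positive metrics on the same line bundle with the same curvature at a place $v$ are proportional, i.e., differ multiplicatively by a constant. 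This gives $\deg(D_{n_0}^-)\,G_v^+=\deg(D_{n_0}^+)\,G_v^-+M(v)$ for every $v$ (Archimedean or not) with $M(v)$ a constant. Finally, the constant is killed by evaluating along the small sequence $t_n$: since $\hhat_{\crit}(f_{a(t_n),b(t_n)})\to 0$ dominates both $G_v^\pm(a(t_n),b(t_n))\ge 0$, one gets $M(v)=0$. You should replace your speculative Berkovich argument with this line of reasoning; the crucial missing idea in your proposal is using Yuan's result to reduce the non-Archimedean problem to identifying a constant, and then using the hypothesis on the sequence $t_n$ (rather than an open stability region) to show the constant vanishes.
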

\proof As $\hhat_{\crit}(f_{a(t),b(t)})=\hhat_+(t)+\hhat_-(t)=\left( \hhat_{\Lbar_{D_{n_0}^+}}(t)+ \hhat_{\Lbar_{D_{n_0}^-}}(t)\right)/3^{n_0},$ we have 
   $$\lim_{n\to \infty} \hhat_{\Lbar_{D_{n_0}^-}}(t_n)=\lim_{n\to \infty} \hhat_{\Lbar_{D_{n_0}^+}}(t_n)=0.$$
Consequently, the Galois orbits of $\{t_n\}$ equidistribute on $C_{\C_v}^{an}$ with respect to the measures $\mu_v^\pm$ appearing in Theorem \ref{equidistribution of parameters}; hence we have 
   $$\mu_v^+=\mu_v^-.$$
When $v$ is Archimedean, $C_{\C_v}^{an}\simeq C(\C)$ and from (\ref{bifurcation measures}, \ref{curvature}, \ref{metric definition}), one has 
  $$3^{n_0}\cdot \mu_\pm=\deg (D_{n_0}^\pm)\cdot \mu_v^\pm.$$
Combining the above two formulas with Proposition \ref{proportional escape-rate}, then for each Archimedean $v\in \Omega_K$ we get the following equality on $C$:
  $$\deg (D_{n_0}^-)\cdot G_v^+=\deg (D_{n_0}^+)\cdot G_v^-.$$
By looking at the growth of $G^\pm_v(a(t), b(t))$ for $t\to t_0\in S_0^\pm$ (see Theorem \ref{continuity thm}),  the above equation indicates $\deg (D_{n_0}^-)\cdot D_{n_0}^+=\deg (D_{n_0}^+)\cdot D_{n_0}^-.$ So we have $L_{D_{n_0}^+}^{\otimes \deg (D_{n_0}^-)}\simeq L_{D_{n_0}^-}^{\otimes \deg (D_{n_0}^+)}$, and the two canonical metrics on this line bundle induce the same curvature on $C$ for each $v\in \Omega_K$ because $\mu^+_v=\mu^-_v$. From \cite[Theorem~3.3]{Yuan12}, we know that if two continuous semi-positive metrics on a line bundle have the same curvature, then these two metrics are proportional to each other. So for each $v\in \Omega_K$, we have 
   $$\deg (D_{n_0}^-)\cdot G_v^+=\deg (D_{n_0}^+)\cdot G_v^-+M(v)$$
on $C$ with $M(v)$ being a constant  depending only on $v$. Because for all $t\in C$, we have
   $$\hhat_{\crit}(f_{a(t), b(t)})\geq \hhat_\pm(t)\geq G_v^\pm(a(t), b(t))\geq 0,$$
we have  $\lim_{n\to \infty}G_v^+(a(t_n), b(t_n))=\lim_{n\to \infty}G_v^-(a(t_n), b(t_n))=0$, i.e., $M(v)=0$ for all $v\in \Omega$. \qed

%%%%%
%%%%%%%%

\section{An algebraic relation between critical points} 
In this section, under certain conditions, we prove an algebraic relation of the iterated critical points on a curve $C$; see Theorem \ref{algebraic relation}. 

Let $K$ be a number field and $C\subset \C^2$ is an irreducible curve defined over $K$.  Denote 
   $$c_n^\pm:=f^n_{a,b}(\pm a)\in K(C)$$
\begin{theorem}\label{algebraic relation}
 Suppose the following two conditions are satisfied:
\begin{itemize}
\item both $+a$ and $-a$ are not persistently preperiodic under $f_{a,b}$ on $C$. 
\item there is a sequence of non-repeating points $t_n\in C(\Kbar)$ with
   $$\lim_{n\to \infty}\hhat_{\crit}(f_{a(t_n),b(t_n)})=0.$$
\end{itemize}
Then there exist non constant polynomials $P_{\pm}(z)\in K(C)[z]$ with
   $$P_+(c_n^+)=P_-(c_{n}^-)\in K(C)$$
for all sufficiently large $n$. 
\end{theorem}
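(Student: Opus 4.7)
The plan is to combine Corollary~\ref{proportional escape cor} with the Medvedev-Scanlon classification of irreducible plane curves invariant under a coordinatewise polynomial action, applied over the function field $K(C)$. From Corollary~\ref{proportional escape cor} we have $G_v^+ = r\,G_v^-$ on $C$ for every $v\in \Omega_K$, where $r:=\deg(D_{n_0}^+)/\deg(D_{n_0}^-)$; summing over places gives $\hhat_+(t) = r\,\hhat_-(t)$ for every $t\in C(\Qbar)$, so the set of parameters at which $+a$ is preperiodic coincides with that for $-a$, and by Proposition~\ref{infinite preperiodic points} this common set $\Sigma \subset C(\Qbar)$ is Zariski dense. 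Already contained in the proof of Corollary~\ref{proportional escape cor} are the divisor identity $\deg(D_{n_0}^-)\cdot D_{n_0}^+ = \deg(D_{n_0}^+)\cdot D_{n_0}^-$ and the equality $\mu_v^+ = \mu_v^-$ of normalized bifurcation measures at every place.

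Next, I would view $f(z)=z^3-3a^2z+b$ as a polynomial in $K(C)[z]$ and form the product map $\Psi := (f,f)$ on $\A^2_{K(C)}$. The sequence $(c_n^+,c_n^-)\in \A^2(K(C))$ is precisely the $\Psi$-orbit of $(a,-a)$, and the hypothesis that neither $\pm a$ is persistently preperiodic makes it infinite. The central claim is that its Zariski closure in $\A^2_{K(C)}$ has dimension one: there exists a nonzero $R\in K(C)[x,y]$ with $R(c_n^+,c_n^-)=0$ for all $n\gg 0$. Granting this, I pass to an irreducible component of the closure containing infinitely many orbit points and replace it by a forward $\Psi$-iterate to obtain an irreducible $(f,f)$-invariant curve $V\subset\A^2_{K(C)}$ through the orbit.

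A generic specialization $f_t$ is neither affinely conjugate to $z^3$ nor to a Chebyshev polynomial, so $f$ is disintegrated over $K(C)$ and the Medvedev-Scanlon classification~\cite{M-S-1} applies to $V$. The horizontal/vertical-line alternative would force $c_n^+$ or $c_n^-$ to be eventually constant in $n$, contradicting the non-persistently-preperiodic hypothesis; therefore $V$ has the form $\{A(x)=B(y)\}$ for polynomials $A,B\in \overline{K(C)}[z]$ satisfying a semiconjugacy compatibility with $f$. Since the orbit lies in $V(K(C))$ and is Zariski dense in $V$, the curve $V$ is defined over $K(C)$, so $A$ and $B$ may be taken in $K(C)[z]$. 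Setting $P_+:=A$ and $P_-:=B$ then yields $P_+(c_n^+)=P_-(c_n^-)$ for every $n\gg 0$, as desired.

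The main obstacle is the middle claim --- the production of the nonzero $R\in K(C)[x,y]$. Proportionality of the canonical heights alone controls only individual specializations and does not directly rule out Zariski-density of the orbit in $\A^2_{K(C)}$. My plan there is to combine arithmetic equidistribution (Theorem~\ref{equidistribution of parameters}) for both critical markings on $C$ with the equality of place-by-place bifurcation measures $\mu_v^+=\mu_v^-$, exploiting the fact that for each $t\in\Sigma$ the whole specialized orbit $\{(c_n^+(t),c_n^-(t))\}_n$ sits inside the finite $(f_t,f_t)$-preperiodic subset of $\A^2$, in order to force --- via a dynamical-Bogomolov-style rigidity argument on the parameter curve --- the generic orbit onto a proper algebraic curve. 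This is the main new technical input beyond what is needed in the unicritical case of~\cite{AO2}.
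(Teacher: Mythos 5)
Your proposal correctly identifies the right starting point (Corollary~\ref{proportional escape cor} gives $G_v^+ = r\,G_v^-$ at every place, hence proportional canonical heights and coincident preperiodic loci) and the right endgame (Medvedev--Scanlon classification of $(f,f)$-invariant curves). However, there is a genuine gap at exactly the step you flag: the production of a nonzero $R\in K(C)[x,y]$ vanishing on $\{(c_n^+,c_n^-)\}_{n\gg 0}$. That step \emph{is} the content of Theorem~\ref{algebraic relation}; once one has any such $R$, the rest (pass to an invariant component, apply \cite{M-S-1}) is a short reduction that the paper itself carries out afterward in the proof of Theorem~\ref{precise version}. Your plan to fill the gap --- combining place-by-place equality of bifurcation measures with ``a dynamical-Bogomolov-style rigidity argument on the parameter curve'' --- is not an argument, and the obstruction is quantitative: the fact that the specialized orbit $\{(c_n^+(t),c_n^-(t))\}_n$ is finite for every $t$ in a Zariski-dense $\Sigma\subset C$ produces, for each such $t$, \emph{some} algebraic relation of uncontrolled degree; without a uniform degree bound one cannot pass from these specializations to a single relation in $K(C)[x,y]$ satisfied generically. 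Nothing in your sketch supplies that bound.

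The paper closes this gap by a concrete local construction at the places of bad reduction rather than by an abstract rigidity principle. For $t_0\in S_0^\pm$, both $c_n^\pm(t)$ lie (for $t$ near $t_0$ and $n$ large) in the domain of the B\"ottcher coordinate $\Phi_{a(t),b(t)}$; the proportionality from Corollary~\ref{proportional escape cor} gives
$$\Phi_{a(t),b(t)}(c_n^+(t))^{\deg D_{n_0}^-}=\zeta_{t_0,n}\,\Phi_{a(t),b(t)}(c_n^-(t))^{\deg D_{n_0}^+}$$
with $|\zeta_{t_0,n}|=1$. A product-formula argument over all places shows $\zeta_{t_0,n}$ is a root of unity, and raising to a suitable power $k$ kills the phase. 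Taking $P_\pm$ to be the polynomial parts of $\Phi^{k\deg D_{n_0}^\mp}$ and invoking Lemma~\ref{high order lemma} to control the tails, one finds that $\ord_{t_0}\big(P_+(c_n^+)-P_-(c_n^-)\big)\to\infty$ for each $t_0\in S_0^\pm$, while this rational function can have poles only of bounded order at $S^\pm\setminus S_0^\pm$; hence the difference vanishes identically for $n$ large. This gives the bounded-degree relation that your sketch lacks. The degree bound comes from the explicit choice of $P_\pm$, which is in turn tied to the divisor data $\deg D_{n_0}^\pm$ from Corollary~\ref{proportional escape cor} --- that is where the proportionality of escape rates is actually converted into algebra. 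You should make this construction the core of the proof rather than deferring it to an unspecified rigidity principle.
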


\subsection{B$\ddot{\textup{o}}$ttcher coordinate} For each $f_{a,b}(z)=z^3-3a^2z+b$ there is a unique uniformized B$\ddot{\textup{o}}$ttcher coordinate $\Phi_{a,b}(z)$, i.e., a univalent, analytic function defined on a neighbourhood of infinity, which is uniquely determined by the conditions 
   $$\Phi_{a, b}(f_{a,b}(z))=\Phi_{a,b}(z)^3, \textup{ and } \Phi_{a,b}(z)=z+o(1).$$
Moreover, for the escape-rate function of the polynomial $f_{a,b}$
   $$G_{a,b}(z):=\lim_{n\to \infty} \frac{\log^+|f^n_{a,b}(z)|}{3^n},$$
we have 
   $$\log |\Phi_{a,b}(z)|=G_{a,b}(z)$$
 for any $z\in \C$ with $G_{a, b}(z)>\max \{G_{a,b}(+a), G_{a,b}(-a)\}$. Let 
   $$\Phi_{a,b}(z)=:z+\frac{\alpha_1}{z}+\frac{\alpha_2}{z^2}+\frac{\alpha_3}{z^3}+\cdots$$
 From the relation $\Phi_{a, b}(f_{a,b}(z))=\Phi_{a,b}(z)^3$, it is easy to see that $\alpha_i$ is a polynomial of $a$ and $b$ for all $i\geq 1$ by induction. Moreover, let the $k$-th power of $\Phi_{a,b}(z)$ be written as:
\begin{equation}
\label{definition P_m}
\Phi_{a,b}(z)^k= P_k(z)+\frac{\alpha_{k,1}}{z}+\frac{\alpha_{k, 2}}{z^2}+\frac{\alpha_{k, 3}}{z^3}+\cdots
\end{equation}
where  $P_k(z)\in \C[a,b][z]$ and $\alpha_{k, i}$ are polynomial functions in $a$ and $b$. 

The following result is implicitly used in the proof of \cite[Lemma~5.5]{Matt-Laura-2} and the ingredients for its proof are all contained in \cite{Matt-Laura-2}, as it was kindly pointed out to us by Laura DeMarco, whom we thank warmly for her help.
 
\begin{lemma}\label{high order lemma}
We work under the above notation for the curve $C$ and the corresponding $\Phi_{a(t), b(t)}$ for the specialization of the B$\ddot{\textup{o}}$ttcher's coordinate along the curve $C$; we also let $f_t(z):=z^3 -3a(t)^2z + b(t)$ for $t\in C(\C)$. Let $t_0\in S_0^\pm$ and  let $k\in\N$. Then for each $c\in \C(C)$ with the property that $-\ord_{t_0}f_t^n(c(t))\to \infty$,   we have that $$\ord_{t_0}\left(\Phi_{a(t),b(t)}(f_t^n(c(t)))^k - P_k(f_t^n(c(t)))\right)\to \infty,$$
as $n\to\infty$.
\end{lemma}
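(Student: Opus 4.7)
The plan is to bound the order at $t_0$ of the tail
\[
\Phi_{a,b}(w)^k-P_k(w)=\sum_{i\ge 1}\alpha_{k,i}(a,b)\,w^{-i},
\]
evaluated at $w=f_t^n(c(t))$, by controlling the polynomials $\alpha_{k,i}(a,b)\in\C[a,b]$ near $t_0$. The key is a weighted-degree bound: assigning weights $\deg a=1$ and $\deg b=3$ (so that $f_{a,b}(z)=z^3-3a^2z+b$ becomes weight-$3$-homogeneous with $\deg z=1$), I will show that $\deg_{\mathrm{wt}}\alpha_{k,i}(a,b)\le k+i$ for all $k,i\ge 1$. Granting this, and setting $\gamma_a:=-\ord_{t_0}a$, $\gamma_b:=-\ord_{t_0}b$, $\gamma_{\max}:=\max\{3\gamma_a,\gamma_b\}$, and $\gamma_n:=-\ord_{t_0}f_t^n(c(t))\to\infty$, we get $-\ord_{t_0}\alpha_{k,i}(a(t),b(t))\le (k+i)\gamma_{\max}/3$, so every term satisfies
\[
\ord_{t_0}\!\bigl(\alpha_{k,i}(a(t),b(t))\,f_t^n(c(t))^{-i}\bigr)\ge i(\gamma_n-\gamma_{\max}/3)-k\gamma_{\max}/3\ge \gamma_n-(k+1)\gamma_{\max}/3
\]
for every $i\ge 1$, once $\gamma_n>\gamma_{\max}/3$, which holds for all large $n$.

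\textbf{Proof of the weighted-degree bound.} I first treat $k=1$, showing $\deg_{\mathrm{wt}}\alpha_i\le i+1$ by induction on $i$, comparing coefficients of $z^{-i}$ on both sides of the functional equation $\Phi(f(z))=\Phi(z)^3$. The base case $\alpha_1=-a^2$ has weight $2$. For the inductive step, the right-hand side satisfies
\[
\Phi(z)^3\big|_{z^{-i}}=3\alpha_{i+2}+Q_i(\alpha_{-1},\alpha_1,\ldots,\alpha_{i+1}),
\]
where $\alpha_{-1}=1$ records the $z$-coefficient of $\Phi$ and $Q_i$ is a polynomial of weighted degree $\le i+3$ by induction. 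The left-hand side satisfies
\[
\Phi(f(z))\big|_{z^{-i}}=\sum_{j\ge 1}\alpha_j\,c_{j,i}(a,b),
\]
where $c_{j,i}(a,b)$ is the $z^{-i}$-coefficient of $f(z)^{-j}$; since $f$ is weight-$3$-homogeneous, $f^{-j}$ is weight-$(-3j)$-homogeneous, so $c_{j,i}$ has weight exactly $i-3j$ and each summand has weight at most $(j+1)+(i-3j)=i-2j+1\le i-1$. Therefore $3\alpha_{i+2}$ has weighted degree at most $i+3=(i+2)+1$, closing the induction. For general $k\ge 1$, the bound is immediate from the multinomial expansion $\alpha_{k,i}=\sum_{j_1+\cdots+j_k=i,\ j_s\in\{-1\}\cup\N}\alpha_{j_1}\cdots\alpha_{j_k}$ (with $\alpha_{-1}=1$ and $\alpha_0=0$), since weights are additive on products.

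\textbf{From termwise bounds to the sum; main obstacle.} For $t$ in a small punctured neighborhood of $t_0$, the point $w_n(t):=f_t^n(c(t))$ has modulus $\asymp |u(t)|^{-\gamma_n}$ (with $u$ any uniformizer at $t_0$), which for $n$ large exceeds the escape radius $O(|u(t)|^{-\gamma_{\max}/3})$ of $f_{a(t),b(t)}$; hence the series $\sum_{i\ge 1}\alpha_{k,i}w_n(t)^{-i}$ converges to the analytic function $\Phi^k(w_n(t))-P_k(w_n(t))$. For each fixed $m$, only finitely many indices $i$ contribute to the $u^m$-coefficient of any summand $\alpha_{k,i}w_n^{-i}$ (namely those with $i(\gamma_n-\gamma_{\max}/3)-k\gamma_{\max}/3\le m$), so the $u^m$-coefficient of the whole analytic sum equals the corresponding finite sum of $u^m$-coefficients, and therefore vanishes for every $m<\gamma_n-(k+1)\gamma_{\max}/3$, as required. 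The principal subtlety is the weighted-degree bound itself: although $\Phi_{a,b}(z)$ is not weighted-homogeneous (for instance $\alpha_5=-\tfrac13 a^2-\tfrac53 a^6-\tfrac19 b^2$ mixes weights~$2$ and~$6$), the left-hand side of the functional equation contributes only lower-weight corrections, so the recursion advances the top weight by the minimum amount and the sharp bound $\le k+i$ survives.
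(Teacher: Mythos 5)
Your approach shares its endpoint with the paper's (termwise order bound at $t_0$, then pass to the infinite sum) but gets there by a genuinely different and more self-contained route. The paper's proof cites the polynomial growth of $-\ord_{t_0}\alpha_{k,i}$ from \cite[Lemma~5.4]{Matt-Laura-2} and then spends most of its effort establishing uniform convergence of the tail series: it shows that the series defining the B\"ottcher tail converges uniformly on annuli $s_1<|t|<s_2$, and then uses the maximum modulus principle to carry the uniform Cauchy property across $|t|<s_2$, after which the Taylor coefficients of the partial sums converge to those of the limit. Your weighted-degree bound $\deg_{\mathrm{wt}}\alpha_{k,i}\le k+i$ (with $\deg a=1$, $\deg b=3$) is a clean, elementary replacement for the growth lemma. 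Moreover it yields a \emph{linear} pole bound $-\ord_{t_0}\alpha_{k,i}(t)\le (k+i)\gamma_{\max}/3$, which is exactly what makes the termwise estimate $\ord_{t_0}\bigl(\alpha_{k,i}(t)\,w_n(t)^{-i}\bigr)\ge \gamma_n-(k+1)\gamma_{\max}/3$ hold uniformly in $i\ge 1$ (once $\gamma_n>\gamma_{\max}/3$); a merely polynomial (say quadratic) bound on the pole orders would destroy this uniformity. Your induction for the weighted bound is correct, though the base should also cover $\alpha_2=b/3$: at $i=0$ the constant term $b$ of $f(z)$ contributes weight $3=i+3$, not $\le i-1$, so $\alpha_1$ and $\alpha_2$ together should be the base and the induction then runs for $i\ge 1$. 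The passage to general $k$ via the multinomial expansion, using $\alpha_{-1}=1$ and $\alpha_0=0$, is fine.

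The one real gap is the last step. You assert that since, for each fixed $m$, only finitely many indices $i$ have a nonzero $u^m$-coefficient in $\alpha_{k,i}(t)\,w_n(t)^{-i}$, ``the $u^m$-coefficient of the whole analytic sum equals the corresponding finite sum of $u^m$-coefficients.'' That interchange does not follow from pointwise convergence of the series; one needs uniform convergence on a circle $|t|=r$ so that the Laurent coefficients of the limit can be computed by Cauchy's integral formula and then exchanged with the infinite sum. In your situation this is true --- for $|t|=r$ small and $n$ large, $|w_n(t)|\asymp r^{-\gamma_n}$ exceeds the escape radius $\asymp r^{-\gamma_{\max}/3}$ by an arbitrarily large factor, so the tail of the B\"ottcher series decays geometrically uniformly in $t$ on that circle, and the partial sums (each holomorphic near $0$ and of order $\ge\gamma_n-(k+1)\gamma_{\max}/3$) converge uniformly on $|t|\le r$ --- but this must be stated and is precisely what the paper's proof is mostly about. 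Adding a sentence to that effect makes your argument complete; as written, the inference from termwise order bounds to the order of the sum is unjustified.
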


\begin{proof}
First, we know that each $\alpha_{k,i}$ is a polynomial in $a$ and $b$ and so, specializing $a$ and $b$ along the curve $C$ we obtain that each $\alpha_{k,i}$ becomes a function $\alpha_{k,i}(t)\in \C(C)$. We note that the order of the pole of $\alpha_{k, i}(t)$ at $t_0$ grows at most polynomially fast as $i\to \infty$ (for $k$ fixed) (see \cite[Lemma~5.4]{Matt-Laura-2}). 

Secondly, arguing as in the proof of Lemma~\ref{continuity at bad point}, we choose a suitable analytic parametrization of a neighborhood of $t_0$ and a suitable uniformizer $u$ at $t_0$ such that we may assume $t_0=0$ and $u=t$. Thus (from the definition of the B$\ddot{\textup{o}}$ttcher's coordinate) we obtain that the power series 
$$F(t, z) := \sum_{i\ge 1} \frac{\alpha_{k,i}(t)}{z^i},$$ 
converges on open sets of $\C\times \C$ of the form 
\begin{equation}
\label{open sets form}
\{(t,z)\colon |t| < s\text{ and }|z|>R_t\},
\end{equation} 
for any  $s<s_0$,  where $s_0$ is a given small positive real number, and $|z| > R_t$, where $R_t$ is larger than the radius of convergence for the B$\ddot{\textup{o}}$ttcher's coordinate of the polynomial $f_t(z)$; for example, one may take (see the proof of \cite[Lemma~5.1]{Matt-Laura-2})
\begin{equation}
\label{R_t}
R_t=O\left(|t|^{2\min\{\ord_0(a), \ord_0(b)/3\}}\right).
\end{equation}
Moreover, we let 
$$R_s := \max\{R_t : |t| = s\}$$
for any $s<s_0$; thus $R_s<+\infty$. Also, since $t_0\in S_0^\pm$, at the expense of replacing $s_0$ by a smaller positive real number, we know that the function $s\mapsto R_s$ is decreasing on $(0,s_0)$ (see \eqref{R_t}). Next we consider the function 
\begin{equation}
\label{defining U ell}
U^\ell(t) := \sum_{i\ge 1} \frac{\alpha_{k,i}(t)}{f_t^\ell(c(t))^i},
\end{equation}
for $\ell\in\N$, 
where $c\in \C(C)$ is a function such that $\lim_{n\to\infty}-\ord_{0}f_t^n(c(t))=\infty$. Our goal is to conclude that 
\begin{equation}
\label{conclusion growth U_l}
\lim_{\ell\to\infty}\ord_{0}U^\ell(t)=+\infty.
\end{equation}

Now, since $F(t,z)$ is analytic on open sets of the form \eqref{open sets form}, then the series defining $F(t, z)$ converges uniformly on sets of the form  
$$\{t\in C(\C) : s_1 < |t| < s_2\} \times \{z: |z| > R_{s_1}\},$$
where $0<s_1<s_2<s_0$.

Fix now some $s_1<s_2$ in the interval $(0,s_0)$. Then let $\ell\in\N$  with the property that if $|t|<s_2$, we have that $|f_t^\ell(c(t))| > R_{s_1}$. Then the series $U^\ell(t)$ converges uniformly on 
$$\{t\in C(\C)\colon  s_1 < |t| < s_2\}.$$ 
In particular, this means that the difference between $U^\ell(t)$ and the partial sum $U^\ell_n(t)$ which is the sum of the first $n$ terms in the series \eqref{defining U ell} is uniformly bounded on the above annuli.

However, because both $U^\ell(t)$ and also $U^\ell_n(t)$ are analytic functions when $|t| <s_2$, then their difference is bounded by their difference on the closed set $|t| = s_2$ (according to the Maximum Value Theorem), which is in turn uniformly bounded (as we vary $n$) because of the above uniform convergence on the annuli. So, $\{U^\ell_n\}$ converges uniformly to $U^\ell$ when $|t| < s_2$. Since each $U^\ell_n$ vanishes at $0$ of high order (note that each $U^\ell_n$ is simply a finite sum and each term vanishes at $t_0$ of order depending on the order of
vanishing for $f_t^\ell(c(t))$), then also $U^\ell(t)$ vanishes at $0$ of high order, thus proving \eqref{conclusion growth U_l}.
\end{proof}

\subsection{Proof of Theorem \ref{algebraic relation}}

From the hypotheses of Theorem \ref{algebraic relation}, by Corollary \ref{proportional escape cor}, it is clear that for each large $n_0$, 
   $$\deg(D_{n_0}^-)\cdot D_{n_0}^+=\deg(D_{n_0}^+)\cdot D_{n_0}^-.$$
Let $t_0\in S^\pm_0$; then for $t\in C$ near $t_0$, we know that for large n, both $c_n^\pm(t)$ are in the univalent domain of $\Phi_{a(t), b(t)}(z)$ (this is proven in \cite[Lemma~5.1]{Matt-Laura-2}). Since 
  $$|\Phi_{a(t), b(t)}(c_n^\pm(t))|=3^n\cdot G_{a(t),b(t)}(\pm a(t))=3^n\cdot G^\pm(a(t), b(t))$$
by Corollary \ref{proportional escape cor}, for all $t$ close to $t_0$, one has 
  $$\deg(D_{n_0}^-)\cdot |\Phi_{a(t), b(t)}(c_n^+(t))|=\deg(D_{n_0}^+)\cdot |\Phi_{a(t), b(t)}(c_n^-(t))|,$$
i.e., there is $\zeta_{t_0,n}\in \C$ of absolute value $1$ such that   
   $$\Phi^{\deg(D_{n_0}^-)}_{a(t), b(t)}(c_n^+(t))=\zeta_{t_0,n} \cdot \Phi^{\deg(D_{n_0}^+)}_{a(t), b(t)}(c_n^-(t)).$$ 
\begin{lemma}
The number $\zeta_{t_0,n}$ is a root of unity. 
\end{lemma}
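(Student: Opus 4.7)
The plan is to express $\zeta_{t_0,n}$ as the value at $t_0$ of a rational function on $C$ defined over $K$, and then to apply Kronecker's theorem. Set $k^+:=\deg(D_{n_0}^-)$, $k^-:=\deg(D_{n_0}^+)$, and define
$$\phi_n \;:=\; \frac{P_{k^+}(c_n^+)}{P_{k^-}(c_n^-)} \;\in\; K(C),$$
where $P_k(z)\in\Q[a,b][z]$ is the polynomial part of $\Phi_{a,b}^{k}$ introduced in \eqref{definition P_m}. Reading off the growth at $t_0$ of the identity $\deg(D_{n_0}^-)G^+=\deg(D_{n_0}^+)G^-$ from Corollary~\ref{proportional escape cor} forces $k^+\gamma_n^+=k^-\gamma_n^-$ for all sufficiently large $n$. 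For such $n$, both $P_{k^+}(c_n^+)$ and $P_{k^-}(c_n^-)$ have a common pole of order $M:=k^+\gamma_n^+$ at $t_0$ (the pole comes from the leading monomial $z^{k^\pm}$ of $P_{k^\pm}$ evaluated at $c_n^\pm$; the lower degree-in-$z$ terms are dominated once $\gamma_n^\pm$ is large enough). Hence $\phi_n$ is regular and nonzero at $t_0$, and $\phi_n(t_0)\in\Kbar^*$.

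Next, I would identify $\phi_n(t_0)$ with $\zeta_{t_0,n}$. Rewriting the defining relation as
$$P_{k^+}(c_n^+) - \zeta_{t_0,n}\,P_{k^-}(c_n^-) \;=\; \zeta_{t_0,n}\bigl(\Phi^{k^-}(c_n^-)-P_{k^-}(c_n^-)\bigr)-\bigl(\Phi^{k^+}(c_n^+)-P_{k^+}(c_n^+)\bigr),$$
Lemma~\ref{high order lemma} shows that the right-hand side has $\ord_{t_0}\to+\infty$ as $n\to\infty$, so in particular it vanishes to far higher order than $-M$. Comparing $u^{-M}$-coefficients of the two Laurent expansions on the left therefore forces $\phi_n(t_0)=\zeta_{t_0,n}$. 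In particular $\zeta_{t_0,n}\in\Kbar^*$, and applying the same identification through any embedding $\iota\colon\Kbar\hookrightarrow\C$ yields $|\phi_n(t_0)|_w=1$ at every archimedean place $w$ of $\Kbar$, because the complex number produced by the escape-rate equality always has archimedean absolute value one by its very definition.

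For a non-archimedean place $w$ of $\Kbar$ above $v\in\Omega_K$, I would run the same argument $v$-adically. The formal series $\Phi_v(z)=z+\sum_{i\ge1}\alpha_i(a,b)z^{-i}$, with $\alpha_i\in\Q[a,b]$, converges on a $v$-adic punctured neighborhood of infinity, the conjugacy equation $\Phi_v(f_{a,b}(z))=\Phi_v(z)^3$ yields $\log|\Phi_v(z)|_v=G_v(z)$ on the convergence region, and the proof of Lemma~\ref{high order lemma} (which uses only Taylor series manipulations, the triangle inequality, and the Maximum Modulus Principle) transcribes to the Berkovich setting, as was already noted in Section~\ref{arith equip}. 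Combined with the $v$-adic proportionality $k^+G_v^+=k^-G_v^-$ from Corollary~\ref{proportional escape cor}, the same leading-coefficient comparison gives $|\phi_n(t_0)|_w=1$ at every non-archimedean $w$. Thus $\phi_n(t_0)\in\Kbar^*$ has Weil height zero, so Kronecker's theorem forces it to be a root of unity, and hence so is $\zeta_{t_0,n}$. The main technical point is the non-archimedean step: one has to set up the $v$-adic B\"ottcher coordinate and a Berkovich analog of Lemma~\ref{high order lemma}, but these are formally parallel to the archimedean constructions.
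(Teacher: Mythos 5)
Your overall strategy is the same as the paper's: realize $\zeta_{t_0,n}$ as an algebraic number by a Laurent-coefficient comparison at $t_0$, show that all its absolute values equal $1$, and invoke Kronecker. The archimedean step and the identification $\phi_n(t_0)=\zeta_{t_0,n}$ (via Lemma~\ref{high order lemma}) match the paper closely. Where you diverge is the non-archimedean step: you propose to set up a $v$-adic B\"ottcher coordinate and a Berkovich analogue of Lemma~\ref{high order lemma}, which you rightly flag as the main technical burden. The paper avoids this entirely: for $t$ close to $t_0$ in the $v$-adic topology and $n$ large, the ultrametric inequality gives $|f_{a,b}^i(c_n^\pm(t))|_v=|c_n^\pm(t)|_v^{3^i}$ directly, hence $G_v^\pm(a(t),b(t))=3^{-n}\log|c_n^\pm(t)|_v$ near $t_0$; combined with the proportionality $\deg(D_{n_0}^-)G_v^+=\deg(D_{n_0}^+)G_v^-$ from Corollary~\ref{proportional escape cor}, this gives $|\zeta_{t_0,n}|_v=1$ with no B\"ottcher machinery at all. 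So your argument is correct, but for the non-archimedean places you are invoking more than you need; the escape-rate identity on its own already yields the $v$-adic absolute value of $\zeta_{t_0,n}$, without the $v$-adic B\"ottcher coordinate or a Berkovich version of Lemma~\ref{high order lemma}. (One small observation in your favor: writing $\phi_n\in K(C)$ explicitly as a ratio of the polynomials $P_k(c_n^\pm)$ and evaluating at $t_0$ gives a slightly cleaner route to $\zeta_{t_0,n}\in\Kbar^*$ than the paper's limit formula, and it makes the membership in the residue field at $t_0$ transparent.)
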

\proof 
Since  $\Phi_{a(t), b(t)}(c_n^\pm(t))=c_n^\pm(t)+o(1)$, then by Lemma \ref{high order lemma} we see that  
\begin{equation}
\label{formula zeta t_0}
\zeta_{t_0,n}=\lim_{t\to t_0} \frac{(c^-_n(t))^{\deg(D_{n_0}^+)}}{(c^+_n(t))^{\deg(D_{n_0}^-)}}.
\end{equation}
In particular, this yields that $\zeta_{t_0,n}\in K$ (because $a$ and $b$ are rational functions on the curve $C$ which is defined over $K$).  
Now we want to show that for any non-Archimedean place $v\in \Omega_K$, we also have $|\zeta_{t_0,n}|_v=1$. For large $n$, we know that for $t$ close to $t_0$ in the topology determined by the non-Archimedean place $v$, 
    $$|f_{a,b}(c_n^\pm (t))|_v=|c_n^\pm(t)|_v^3>>|2a^2(t)|_v, |b(t)|_v$$
and inductively $$|f_{a,b}^i(c_n^\pm (t))|_v=|c_n^\pm(t)|_v^{3^i}.$$
Hence from the definition of $G^\pm_v$, we have $G_v^\pm(a(t), b(t))=\frac{\log |c_n^\pm(t)|_v}{3^n}$. Again, by Corollary \ref{proportional escape cor}, for $t\in C$ close to $t_0$, we have
\begin{equation}
\label{formula Green v}
\deg(D_{n_0}^-) \cdot \log |c_n^+(t)|_v= \deg(D_{n_0}^+) \cdot \log |c_n^-(t)|_v.
\end{equation}
Then equalities \eqref{formula zeta t_0} and \eqref{formula Green v} yield  $|\zeta_{t_0,n}|_v=1$. As $|\zeta_{t_0,n}|_v=1$ for all $v\in \Omega_K$, we conclude that $\zeta_{t_0,n}$ a root of unity. 
\qed

Since $\Phi_{a, b} (c_{n+i}^\pm(t))=\Phi_{a, b} (f^i_{a,b}(c_{n}^\pm(t)))=\Phi_{a,b}^{3^i}(c_n(t))$, 
if we increase $n$ to $n+i$, then we get that $\zeta_{t_0,n+i}=\zeta_{t_0,n}^{3^i}$. 

We pick a large $k$, such that $\zeta_{t_0,n}^k=1$ for all $t_0\in S_0^\pm$ (and all large $n$). Also we let 
   $$P_+(z)=P_{k\cdot \deg(D_{n_0}^-)}(z)\text{ and }P_-(z)=P_{k\cdot \deg(D_{n_0}^+)}(z),$$
where the polynomials $P_m(z)$ are defined as in \eqref{definition P_m}. 
Then Lemma \ref{high order lemma} coupled with  the fact 
    $$\Phi^{k\cdot \deg(D_{n_0}^-)}_{a(t), b(t)}(c_n^+(t))=\zeta_{t_0,n}^k\cdot \Phi^{k\cdot \deg(D_{n_0}^+)}_{a(t), b(t)}(c_n^-(t))=\Phi^{k\cdot \deg(D_{n_0}^+)}_{a(t), b(t)}(c_n^-(t)), $$ 
yield that for each $t_0\in S_0^+=S_0^-$, we have 
\begin{equation}
\label{zeros not poles}
\lim_{n\to \infty} \ord_{t_0}\left( P_+(c_n^+(t))-P_-(c_{n}^-(t))\right)=\infty.
\end{equation}
Moreover, as $P_\pm(z)\in \C[a,b][z]$  and also $c_n^\pm\in \C[a,b]$, then $P_\pm(c_n^\pm(t))$ may only have poles only at the points contained in $S^\pm$. However, \eqref{zeros not poles} yields that $P_\pm(c_n^\pm(t))$ may only have poles (of bounded order) at the finitely many points contained in $S^\pm\backslash S_0^\pm$ for all $n$. But then for large $n$, \eqref{zeros not poles} yields that  
   $$P_+(c_n^+)-P_-(c_{n}^-)=0\in K(C).$$\qed

%%%%%
%%%%%%%%
\section{Proof of the main theorem}
In this section we finish the proof of the main theorem stated in the introduction. 

\begin{proof}[Proof of Theorem~\ref{precise version}.] Notice that PCF points $(a,b)\in \C^2$ are the intersections of the zero loci of $f^{n_1}_{a,b}(+a)-f^{m_1}_{a,b}(+a)=0$ and $f^{n_2}_{a,b}(-a)-f^{m_2}_{a,b}(-a)=0$ for some $n_i>m_i\geq 0$ with $i=1,2$. Because there are only countable many PCF points in $\M_3$, all the parameters $(a,b)$ with $f_{a,b}$ being PCF are in $\Qbar^2\subset \C^2$. If an irreducible curve $C\subset \C^2$ contains a Zariski-dense set of points in $\Qbar^2$, then $C$ must be a curve defined over $\Qbar$. Since $\hhat_{\crit}(f_{a, b})\geq 0$ with equality if and only if $f_{a,b}$ is PCF, we conclude that statement~(1) implies statement~(3) in the conclusion of Theorem~\ref{precise version}. 

Suppose now that statement~(2) holds. If one of the marked critical point is persistently preperiodic on $C$, then by Proposition \ref{infinite preperiodic points}, there are infinitely many $(a,b)\in C$ with $f_{a,b}$ being PCF. If $f^n_{a,b}(+a)=f^m_{a,b}(-a)$ on $C$ for some $n,m\geq 0$, then for any $(a,b)\in C$, $+a$ is preperiodic if and only if $-a$ is preperiodic under $f_{a,b}$. Moreover, if $b=0$ on $C$, then $f^n_{a,0}(-a)=-f^{n}_{a,0}(+a)$ for all $n$ on $C$, i.e., $+a$ has finite forward orbit under $f_{a,0}$ if and only if so does $-a$. Hence by Proposition \ref{infinite preperiodic points}, in all cases there are infinitely many $(a,b)\in C$ with $f_{a,b}$ being PCF. Therefore, statement~(2) implies statement~(1) in the conclusion of Theorem~\ref{precise version}. 

The only implication left to prove is to show that statement~(3) implies  statement~(2) in the conclusion of Theorem~\ref{precise version}. So, suppose now that statement~(3) holds. Let $K$ be a number field such that $C$ is defined over $K$. If one of the marked critical points $\pm a$ is persistently preperiodic, then the second statement holds. We assume neither of the critical points $\pm a$ is persistently preperiodic under $f_{a,b}$ on $C$.  From Theorem \ref{algebraic relation}, there exist polynomials $P_\pm(z) \in K(C)[z]$ such that for all $n\ge n_0$ (for some large positive integer $n_0$), we have 
   that $P_+(c_n^+)=P_-(c_{n}^-)$ as functions in $K(C)$. 
Consider the plane curve given by the equation $$\{(x,y)\colon P_+(x)-P_-(y)=0\}.$$ 
Since $\{(c_n^+, c^-_{n})\}_{n\geq n_0}\subset K(C)\times K(C)$ 
is an infinite set lying on $P_+(x)-P_-(y)=0$ and invariant by the coordinatewise action of $f_{a,b}$ on $\mathbb{A}^2$,  
we can find an irreducible component of $P_+(x)-P_-(y)=0$ containing infinitely many points in $\{(c_n^+, c^-_{n})\}_{n\geq n_0}$ and periodic under $(f_{a,b}, f_{a,b})$. By \cite[Theorem 6.24]{M-S-1}, such an irreducible component of $P_+(x)-P_-(y)=0$ is a graph given by
   $$x=g(y)\textup{ or } y=g(x)$$
for some $g(z)\in K(C)[z]$ which commutes with $f^\ell_{a,b}(z)\in K(C)[z]$ for some $\ell>0$. Without loss of generality, we assume the curve is given $x=g(y)$ and then $c_{n_1}^+=g(c^-_{n_1})$ for some large positive integer $n_1$. At all but finitely many points $t\in C$ (as long as they are not poles for the coefficients of   $g(z)\in K(C)[z]$), specializing $g$ at each such point yields a polynomial $g_t(z)\in \C[z]$. For $t\in C$ such that $a(t), b(t)\neq 0,\infty$, there is no symmetry for the Julia set $J_{f_{a(t), b(t)}}$. Since  the polynomial $g_t(z)$ commutes with $f_{a(t), b(t)}^\ell(z)$, we conclude that $g_t(z)$ must be an iterate of $f_{a(t), b(t)}$ (see \cite{SS} and also \cite[Proposition 2.3]{Kh}). Obviously, for all but finitely many points $t\in C$, we also have that $\deg(g_t)=\deg(g)$. Therefore there exists a nonnegative integer $r$ such that $g=f_{a,b}^r$. So if $C\subset \C^2$ is not the line given by $a=0$ or $b=0$, then we can find integers $n, m\geq 0$ with $f_{a,b}^n(+a)=f_{a,b}^m(-a)$ on $C$. For the case $a=0$, we see $f_{0, b}^0(+0)=f_{0, b}^0(-0)=0$. This concludes the proof of Theorem~\ref{precise version}. 
\end{proof}

\bigskip

\def\cprime{$'$}

%%%%%
%%%%%
\end{document}